\newcommand{\Vq}{V_{[q]}}
\newcommand{\Vp}{V_{[p]}}
\newcommand{\Vd}{V_{[d^\infty]}}
\newcommand{\al}{\alpha}
\newcommand{\be}{\beta}
\newcommand{\N}{{\mathbb{N}}}
\newcommand{\Z}{{\mathbb{Z}}}
\newcommand{\uloopr}[1]{\ar@'{@+{[0,0]+(-4,5)}@+{[0,0]+(0,10)}@+{[0,0] +(4,5)}}^{#1}}
\newcommand{\uloopd}[1]{\ar@'{@+{[0,0]+(5,4)}@+{[0,0]+(10,0)}@+{[0,0]+ (5,-4)}}^{#1}}
\newcommand{\dloopr}[1]{\ar@'{@+{[0,0]+(-4,-5)}@+{[0,0]+(0,-10)}@+{[0, 0]+(4,-5)}}_{#1}}
\newcommand{\dloopd}[1]{\ar@'{@+{[0,0]+(-5,4)}@+{[0,0]+(-10,0)}@+{[0,0 ]+(-5,-4)}}_{#1}}
\newcommand{\luloop}[1]{\ar@'{@+{[0,0]+(-8,2)}@+{[0,0]+(-10,10)}@+{[0, 0]+(2,2)}}^{#1}}
\newtheorem{lemma}{Lemma}[section]
\newtheorem{corollary}[lemma]{Corollary}
\newtheorem{theorem}[lemma]{Theorem}
\newtheorem{proposition}[lemma]{Proposition}
\newtheorem{remark}[lemma]{Remark}
\newtheorem{definition}[lemma]{Definition}
\newtheorem{definitions}[lemma]{Definitions}
\newtheorem{example}[lemma]{Example}
\def\Ext{\operatorname{Ext}}
\DeclareMathOperator{\Rad}{Rad}
\def\M{\mathcal{M}}
\def\N{{\mathbb N}}
\def\Hom{\operatorname{Hom}}
\def\Ext{\operatorname{Ext}}
\def\dualita#1#2{\mathrel{
                 \mathop{\vcenter{
                 \offinterlineskip
                 \hbox to 0.6truecm{\rightarrowfill}
                 \hbox to 0.6truecm{\leftarrowfill}}}%
                 \limits_{#2}^{#1}}}
\DeclareMathOperator{\Ker}{Ker}
\begin{document}

\title[Extensions of simple modules  over Leavitt path algebras]{Extensions of simple modules \\ over Leavitt path algebras}
\author{Gene Abrams$^*$}
\address{Department of Mathematics, University of Colorado,
Colorado Springs, CO 80918 U.S.A.}
\email{abrams@math.uccs.edu}
\thanks{2010 AMS Subject Classification:  16S99 (primary) \\  
.  \ \  $^*$corresponding author \ \ abrams@math.uccs.edu \ \ 7192553182 \\  The first  author is partially supported by a Simons Foundation Collaboration Grants for Mathematicians Award \#208941.     The second and third authors are supported by Progetto di Eccellenza Fondazione Cariparo ``Algebraic structures
and their applications: Abelian and derived categories, algebraic entropy and representation of algebras'' and Progetto di Ateneo ``Categorie Differenziali Graduate'' CPDA105885.    Part of this work was carried out during a visit of the first author to the Universit\`{a} degli Studi di Padova.    The first author is pleased to take this opportunity to again express his thanks to the host institution, and its faculty, for its warm hospitality and support by the Visiting Scientist 2012-13 grant of the Universit\`{a} degli Studi di Padova.}

\author{Francesca Mantese}
\address{Dipartimento di Informatica, Universit\`{a} degli Studi di Verona, I-37134 Verona, Italy}
\email{francesca.mantese@univr.it}

\author{Alberto Tonolo}
\address{Dipartimento Matematica, Universit\`{a} degli Studi di Padova, I-35121, Padova, Italy}
\email{tonolo@math.unipd.it}



\keywords{Leavitt path algebra, Chen simple module}

\begin{abstract}

Let $E$ be a  directed graph, $K$ any field, and let $L_K(E)$ denote  the Leavitt path algebra of $E$ with coefficients in $K$.     For each rational infinite   path $c^\infty$ of $E$ we explicitly construct a projective resolution of  the corresponding  Chen simple left $L_K(E)$-module $V_{[c^\infty]}$.
 Further, when $E$ is row-finite, for each irrational infinite  path $p$ of $E$ we explicitly construct a projective resolution of  the corresponding  Chen simple left $L_K(E)$-module $V_{[p]}$.    For Chen simple modules $S,T$ we describe ${\rm Ext}_{L_K(E)}^1(S,T)$ by presenting an explicit  $K$-basis.   For any  graph $E$ containing at least one cycle, this description guarantees the existence of  indecomposable left $L_K(E)$-modules of any prescribed finite length.

\end{abstract}

\dedicatory{Dedicated to Alberto Facchini on the occasion of his sixtieth birthday.}
\maketitle


\section{Introduction}\label{IntroSection}

Given any directed graph $E$ and field $K$, one may construct the {\it Leavitt path algebra of E with coefficients in K} (denoted $L_K(E)$), as first described in \cite{AAP1} and \cite{AMP}.   Since their introduction, various structural properties of the algebras $L_K(E)$ have been discovered, with a significant number of the results in the subject taking on the following form:  $L_K(E)$ has some specified algebraic property if and only if $E$ has some specified graph-theoretic property.  (The structure of the field $K$ often plays no role in results of this type.)  A few (of many) examples of such results include a description of those Leavitt path algebras which are simple; purely infinite simple; finite dimensional; prime; primitive; exchange; etc. 

Although there are graphs for which the structure of corresponding Leavitt path algebra is relatively pedestrian (e.g., is a direct sum of matrix rings either over  $K$, or over the Laurent polynomial algebra $K[x,x^{-1}]$, or some combination thereof), the less-mundane examples of Leavitt path algebras exhibit somewhat exotic behavior.   For instance,  the prototypical Leavitt path algebra $A = L_K(R_n)$ ($n\geq 2$),  which arises from the graph $R_n$ having one vertex and $n$ loops,  has the property that $A \cong A^n$ as left (or right) $A$-modules.    Analogous  ``super decomposability" properties are also  found in other important classes of Leavitt path algebras.      These types of  structural properties lead to a dearth (if not outright absence) of indecomposable one-sided $L_K(E)$-ideals, which  subsequently makes the search for simple (and, more generally, indecomposable) modules over Leavitt path algebras somewhat of a challenge.

For a graph $E$, an {\it infinite path in }$E$ is a sequence of edges $e_1e_2e_3\cdots$, for which $s(e_{i+1}) = r(e_i)$ for all $i\in \N$.   
 In \cite{C}, Chen produces, for each infinite path $p$ in $E$, a simple left $L_K(E)$-module $V_{[p]}$.   Further, Chen describes, for each sink vertex $w$ of $E$, a simple left $L_K(E)$-module $\mathcal N_w$.    

In Section \ref{projresSection} we produce explicit projective resolutions  for  Chen simple modules.  As a result, we will see in Theorem \ref{projresofVcinftyfromL(E)v}  that $V_{[c^\infty]}$ is finitely presented for any closed path $c$. Further, in Theorem \ref{Vpinftynotfinitelypresented}  we give necessary and sufficient conditions on a row-finite graph $E$ which ensure that $V_{[p]}$ is not finitely presented for an irrational infinite path $p$.    In Section \ref{ExtSection} we describe the extension groups ${\rm Ext}^1(S,T)$ corresponding to any pair of Chen simple modules $S,T$.   Using some general results about uniserial modules over hereditary rings, we conclude by showing (Corollary \ref{cor:uniserialoflengthN})  how our description of ${\rm Ext}^1(S,S)$ guarantees the existence of indecomposable $L_K(E)$-modules of any prescribed finite length. 

We set some notation.  A (directed) graph $E = (E^0, E^1, s,r)$ consists of a {\it vertex set} $E^0$, an {\it edge set} $E^1$, and {\it source} and {\it range} functions $s, r: E^1 \rightarrow E^0$.  For $v\in E^0$, the set of edges $\{ e\in E^1 \ | \ s(e)=v\}$ is denoted $s^{-1}(v)$. $E$ is called {\it finite} in case both $E^0$ and $E^1$ are finite sets.   $E$ is called {\it row-finite} in case $s^{-1}(v)$ is finite for every $v\in E^0$.   
A {\it path} $\alpha$ in $E$ is a sequence $e_1 e_2 \cdots e_n$ of edges in $E$ for which $r(e_i) = s(e_{i+1})$ for all $1 \leq i \leq n-1$.  
We say that such $\alpha$ has {\it length} $n$, and we write $s(\alpha) = s(e_1)$ and $r(\alpha) = r(e_n)$.  We view each vertex $v \in E^0$ as a path of length $0$, and denote $v = s(v) = r(v)$.  We denote the set of paths in $E$ by ${\rm Path}(E)$. A  path $\sigma = e_1 e_2 \cdots e_n$ in $E$   is {\it closed}  in case $r(e_n) = s(e_1)$.     Following \cite{C} (but not standard in the literature), a closed path $\sigma$ is called {\it simple} in case $\sigma \neq \beta^m$ for any closed path $\beta$ and integer $m\geq 2.$       A {\it sink} in $E$ is a vertex $w \in E^0$ for which the set $s^{-1}(w)$ is empty, while an {\it infinite emitter} in $E$ is a vertex $u \in E^0$ for which the set $s^{-1}(u)$ is infinite.  

\smallskip

For any field $K$ and  graph $E$ the Leavitt path algebra $L_K(E)$ has been the focus of sustained investigation since 2004.  We give here a basic description of $L_K(E)$; for additional information, see  \cite{AAP1} or \cite{TheBook}.       Let $K$ be a field, and let $E = (E^0, E^1, s,r)$ be a directed  graph with vertex set $E^0$ and edge set $E^1$.   The {\em Leavitt path $K$-algebra} $L_K(E)$ {\em of $E$ with coefficients in $K$} is  the $K$-algebra generated by a set $\{v\mid v\in E^0\}$, together with a set of symbols $\{e,e^*\mid e\in E^1\}$, which satisfy the following relations:

(V)   \ \ \  \ $vu = \delta_{v,u}v$ for all $v,u\in E^0$, \  

  (E1) \ \ \ $s(e)e=er(e)=e$ for all $e\in E^1$,

(E2) \ \ \ $r(e)e^*=e^*s(e)=e^*$ for all $e\in E^1$,

 (CK1) \ $e^*e'=\delta _{e,e'}r(e)$ for all $e,e'\in E^1$, and

(CK2)Ê\ \ $v=\sum _{\{ e\in E^1\mid s(e)=v \}}ee^*$ for every   $v\in E^0$ for which $0 < |s^{-1}(v)| < \infty$.

An alternate description of $L_K(E)$ may be given as follows.  For any graph $E$ let $\widehat{E}$ denote the ``double graph" of $E$, gotten by adding to $E$ an edge $e^*$ in a reversed direction for each edge $e\in E^1$.   Then $L_K(E)$ is the usual path algebra $K\widehat{E}$, modulo the ideal generated by the relations (CK1) and (CK2).

\smallskip

It is easy to show that $L_K(E)$ is unital if and only if $|E^0|$ is finite; in this case, $1_{L_K(E)} = \sum_{v\in E^0}v$.    Every element of $L_K(E)$ may be written as $\sum_{i=1}^n k_i \alpha_i \beta_i^*$, where $k_i$ is a nonzero element of $K$, and each of the $\alpha_i$ and $\beta_i$ are paths in $E$.  If $\alpha \in {\rm Path}(E)$ then we may view $\alpha \in L_K(E)$, and will often refer to such $\alpha$ as a {\it real path} in $L_K(E)$; analogously, for $\beta = e_1 e_2 \cdots e_n \in {\rm Path}(E)$ we often refer to the element $\beta^* = e_n^* \cdots e_2^* e_1^*$ of $L_K(E)$ as a {\it ghost path} in $L_K(E)$.     The  map $KE \rightarrow L_K(E)$ given by the $K$-linear extension of $\alpha \mapsto \alpha$ (for $\alpha \in {\rm Path}(E)$) 
 is an injection of $K$-algebras by \cite[Corollary 1.5.12]{TheBook}.

The ideas presented in the following few paragraphs come from \cite{C};  however, some of the  notation we use here differs from that used in \cite{C}, in order to make our presentation more notationally consistent with the general body of literature regarding Leavitt path algebras.



  

Let $p$ be an {\it infinite path in} $E$; that is, $p$ is a sequence $ e_1e_2e_3\cdots$, where $e_i \in E^1$ for all $i\in \N$, and for which $s(e_{i+1}) = r(e_i)$ for all $i\in \N$.   We emphasize that while the phrase {\it infinite path in} $E$ might seem to suggest otherwise, an infinite path in $E$ is not an element of ${\rm Path}(E)$, nor may it be interpreted  as   an element of the path algebra $KE$ nor  of the Leavitt path algebra $L_K(E)$.  (Such a path is sometimes called a {\it left}-infinite path in the literature.)  We denote the set of infinite paths in $E$ by $E^\infty$.   

 For $p = e_1e_2e_3\cdots \in E^\infty$  and $n\in \N$ we denote by $\tau_{\leq n}(p)$, or often more efficiently by $p_n$, the (finite) path $e_1e_2\cdots e_n$, while we denote by $\tau_{>n}(p)$ the infinite path $e_{n+1}e_{n+2}\cdots$.   We note that $\tau_{\leq n}(p)$ is an element of ${\rm Path}(E)$ (and thus may be viewed as an element of $L_K(E)$), and that $p$ is the concatenation $p = \tau_{\leq n}(p) \cdot \tau_{>n}(p)$.  

Let $c$ be a closed path in $E$.  Then the path $c c c \cdots$ is an infinite path in $E$, which we denote by $c^\infty$.   We call an infinite path of the form $c^\infty$ a {\it cyclic infinite} path.   For $c$ a closed path in $E$ let $d$ be the simple closed path in $E$ for which $c = d^n$.   Then $c^\infty = d^\infty$ as elements of $E^\infty$.

If $p$ and $q$ are infinite paths in $E$, we say that $p$ and $q$ are {\it tail equivalent} (written $p \sim q$) in case there exist integers $m,n$ for which $\tau_{>m}(p) = \tau_{>n}(q)$; intuitively, $p \sim q$ in case $p$ and $q$ eventually become the same infinite path.   Clearly $\sim$ is an equivalence relation on $E^\infty$, and we let $[p]$ denote the $\sim$ equivalence class of the infinite path $p$.

The infinite path $p$ is called {\it rational} in 
case $p \sim c^\infty$ for some closed path $c$.   By a previous observation, we may assume without loss of generality that such $c$ is a simple closed path.      In particular, for  any  closed path $c$ we have that $c^\infty$ is rational.    If $p \in E^\infty$ is not rational we say $p$ is {\it irrational}.   

\begin{example}\label{R2Example}
{\rm 
Let $R_2$ denote the ``rose with two petals" graph
$$\xymatrix{\bullet^v \ar@(ul,dl)_e \ar@(ur,dr)^f} \ .$$
\noindent
Then $q = efeffefffeffffe\cdots$ is an irrational infinite path in $R_2^\infty$.  Indeed, it is easy to show that there are uncountably many distinct irrational infinite paths in $R_2^\infty$.   We note additionally that there are infinitely many simple closed paths in ${\rm Path}(R_2)$, for instance, any path of the form $ef^i$ for $i\in \Z^+$.  
}
\end{example}

Let $M$ be a left $L_K(E)$-module.  For each $m\in M$ we define the $L_K(E)$-homomorphism $\hat{\rho}_m: L_K(E) \rightarrow M$, given by $\hat{\rho}_m(r) = rm$.   The restriction of the right-multiplication map $\hat{\rho}_m$ may also be viewed as an $L_K(E)$-homomorphism from any left ideal $I$ of $L_K(E)$ into $M$.  When $I = L_K(E)v$ for some vertex $v$ of $E$, we will denote $\hat{\rho}_m$ simply by $\rho_m$. 

\medskip
 
Following \cite{C}, for any infinite path $p$ in $E$  we construct a simple left $L_K(E)$-module $V_{[p]}$, as follows.   

\begin{definition} \label{Chendef}
{\rm 
Let $p$ be an infinite path in the graph $E$, and let $K$ be any field.  Let $V_{[p]}$ denote the $K$-vector space having basis $[p]$, that is, having basis consisting of distinct elements of $E^\infty$ which are tail-equivalent to $p$.    For any $v\in E^0$, $e\in E^1$, and $q = f_1f_2f_3\cdots  \in [p]$, define 
$$
 v \cdot q =
 \begin{cases}
q  &\text{if }  v = s(f_1) \\
0 &\text{otherwise,} 
\end{cases}
    \ \ \ 
 e\cdot q=
  \begin{cases}
eq  &\text{if }  r(e) = s(f_1) \\
0 &\text{otherwise,} 
\end{cases}
\ \ \ \ \mbox{and} \qquad 
e^* \cdot q = 
 \begin{cases}
\tau_{>1}(q)  &\text{if }  e = f_1 \\
0 &\text{otherwise.} 
\end{cases}
$$
\noindent
Then  the $K$-linear extension to all of $V_{[p]}$ of this action endows $V_{[p]}$ with the structure of a left $L_K(E)$-module.   
}
\end{definition}

\begin{theorem} \label{Chentheoremforsimples}  (\cite[Theorem 3.3]{C}).  Let $E$ be any directed graph and $K$ any field.  Let $p\in E^\infty$.   Then the left $L_K(E)$-module $V_{[p]}$ described in  Definition \ref{Chendef}
 is simple.   Moreover, if $p,q \in E^\infty$, then $V_{[p]} \cong V_{[q]}$ as left $L_K(E)$-modules if and only if $p \sim q$, which happens precisely when $V_{[p]} = V_{[q]}$.
\end{theorem}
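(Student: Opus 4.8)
The plan is to verify the three assertions of Theorem~\ref{Chentheoremforsimples} directly from Definition~\ref{Chendef}, in the order: (1) $V_{[p]}$ is simple; (2) $p\sim q$ implies $V_{[p]}=V_{[q]}$ (indeed these are literally the same $K$-vector space with the same action, since $[p]=[q]$ as subsets of $E^\infty$); (3) $V_{[p]}\cong V_{[q]}$ as $L_K(E)$-modules forces $p\sim q$. The middle assertion is essentially immediate: tail-equivalence is an equivalence relation, so $p\sim q$ gives $[p]=[q]$, hence the basis and the formulas defining the action coincide verbatim, giving $V_{[p]}=V_{[q]}$, which trivially gives $V_{[p]}\cong V_{[q]}$.

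For simplicity, I would take any nonzero submodule $N\subseteq V_{[p]}$ and show $N=V_{[p]}$. Pick $0\neq x\in N$ and write $x=\sum_{i=1}^{k}\lambda_i q_i$ with the $q_i\in[p]$ distinct and all $\lambda_i\neq 0$. The key computational tool is that for a finite path $\alpha=g_1\cdots g_n$ one has $\alpha^*\cdot q = \tau_{>n}(q)$ if $q_n=\alpha$ (i.e. $\alpha$ is an initial segment of $q$) and $\alpha^*\cdot q=0$ otherwise; this follows by iterating the single-edge formula $e^*\cdot q=\tau_{>1}(q)$ or $0$. Since $q_1,\dots,q_k$ are distinct infinite paths, for $n$ large enough the initial segments $\tau_{\le n}(q_1),\dots,\tau_{\le n}(q_k)$ are pairwise distinct; choosing $\alpha=\tau_{\le n}(q_1)$ for such an $n$, we get $\alpha^*\cdot x=\lambda_1\,\tau_{>n}(q_1)$, which is a nonzero scalar multiple of a single basis element $r:=\tau_{>n}(q_1)\in[p]$. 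Hence $r\in N$. Now it remains to show that $L_K(E)\cdot r = V_{[p]}$: given any basis element $q'\in[p]$, tail-equivalence of $q'$ and $r$ means $\tau_{>s}(q')=\tau_{>t}(r)$ for some $s,t$; then $(\tau_{\le t}(r))^*\cdot r=\tau_{>t}(r)$, and applying the real path $\tau_{\le s}(q')$ gives $\tau_{\le s}(q')\cdot\tau_{>t}(r) = \tau_{\le s}(q')\cdot\tau_{>s}(q') = q'$ (here one uses that the range condition $r(\text{last edge of }\tau_{\le s}(q'))=s(\text{first edge of }\tau_{>s}(q'))$ holds because $q'$ is a genuine infinite path). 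So every basis element lies in $N$, whence $N=V_{[p]}$.

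For the converse direction of the isomorphism statement, suppose $\varphi:V_{[p]}\xrightarrow{\sim}V_{[q]}$ is an $L_K(E)$-module isomorphism. Fix a basis element $p'\in[p]$, say with $s(p')=v\in E^0$; then $v\cdot p'=p'$, so $\varphi(p')=\varphi(v\cdot p')=v\cdot\varphi(p')$, meaning $\varphi(p')$ is supported on basis elements $q'\in[q]$ with $s(q')=v$. Write $\varphi(p')=\sum_j\mu_j q_j'$ with the $q_j'\in[q]$ distinct. Applying a long ghost path $\alpha^*$ with $\alpha=\tau_{\le n}(p')$ kills $p'$ (sending it to $\tau_{>n}(p')$) and, by $L_K(E)$-linearity, $\varphi(\tau_{>n}(p'))=\alpha^*\cdot\varphi(p')$; for $n$ large the right side is $\sum_{j:\,q_j'\text{ has initial segment }\alpha}\mu_j\,\tau_{>n}(q_j')$. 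Since $\varphi$ is injective this must be nonzero, so at least one $q_j'$ has $\tau_{\le n}(q_j')=\tau_{\le n}(p')$ for all large $n$ simultaneously — more carefully, one shows that some fixed $q_j'$ shares arbitrarily long initial segments with $p'$, which forces $q_j'=p'$ as infinite paths (two infinite paths agreeing on all finite initial segments are equal). Therefore $p'\in[q]$, i.e. $p'\sim q$; combined with $p'\sim p$ this yields $p\sim q$.

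The main obstacle is the last paragraph: making rigorous the claim that $\varphi(p')$ must involve a basis element which equals $p'$ itself. The subtlety is that a priori $\varphi(p')$ is a finite linear combination of several basis vectors $q_j'$, and applying $\alpha^*$ for a single large $n$ only tells us that \emph{some} $q_j'$ agrees with $p'$ up to length $n$; one must argue that the \emph{same} index $j$ works for all $n$. This is handled by a stabilization/pigeonhole argument: the index set is finite, so some $j$ agrees with $p'$ for infinitely many (hence, since initial segments are nested, all sufficiently large, hence all) values of $n$; alternatively, choose $n$ larger than the length at which the finitely many distinct $q_j'$ first pairwise disagree and at which any $q_j'\ne p'$ first disagrees with $p'$, so that exactly the indices with $q_j'=p'$ survive, and injectivity of $\varphi$ forces this surviving set to be nonempty. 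Everything else reduces to the single-generator relations (V), (E1), (E2), (CK1) together with the elementary combinatorics of initial segments of infinite paths, and requires no use of (CK2).
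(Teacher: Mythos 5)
Your argument is correct, but note that the paper itself offers no proof to compare against: Theorem \ref{Chentheoremforsimples} is quoted verbatim from Chen \cite[Theorem 3.3]{C}, so your write-up is a self-contained verification of an imported result rather than an alternative to an in-paper argument. The route you take is the standard one (and essentially Chen's): the iterated ghost-path formula $\alpha^*\cdot q=\tau_{>n}(q)$ if $\alpha=\tau_{\leq n}(q)$ and $0$ otherwise, applied with $n$ beyond the first disagreement of the finitely many distinct basis paths occurring in an element, isolates a single basis vector and yields simplicity; tail-equivalence then lets you rebuild any basis element of $[p]$ by a real path times a ghost path, and the converse of the isomorphism statement follows from the same truncation trick together with the stabilization/pigeonhole step you spell out (only the indices $j$ with $q_j'=p'$ can survive once $n$ exceeds all pairwise disagreement lengths, and injectivity forces that set to be nonempty). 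The one point where your wording slips --- ``kills $p'$ (sending it to $\tau_{>n}(p')$)'' --- is harmless, and you are right that no cancellation can occur among the surviving terms since distinct paths sharing the length-$n$ initial segment have distinct tails; you are also right that (CK2) is never needed, since the module structure is already granted by Definition \ref{Chendef}. In short: correct, complete for the purposes of the statement, and in line with the proof in \cite{C} that the paper cites.
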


We will refer to a module of the form $V_{[p]}$ as presented in Theorem \ref{Chentheoremforsimples} as a {\it Chen simple module}.   

For any sink $w$ in a graph $E$,  Chen in \cite{C} presents a construction, similar in flavor to the one given in Definition \ref{Chendef},  of a simple left $L_K(E)$-module $\mathcal{N}_w$.   He then shows that $\mathcal{N}_w$ is isomorphic as a left $L_K(E)$-module to the left ideal $L_K(E)w$ of $L_K(E)$ generated by $w$.  Observe that,  for any sink $w$, the ideal $L_K(E)w$ is spanned by the paths in $E$ ending in $w$. Moreover for any $i\in \mathbb{Z}^+$, we get that $w^i=w$ and thus we can consider $w=w^\infty$ as an element in $E^{\infty}$. For these reasons, for any sink $w$ of $E$, we refer to $\mathcal{N}_w=L_K(E)w$ as a Chen simple module  and, for consistency, we  denote $\mathcal{N}_w$ by $V_{[w^\infty]}$.

 \begin{remark}\label{rem:Bergman}
 {\rm  By invoking a powerful result of Bergman, it was established in \cite[Theorem 3.5]{AMP} that, when $E$ is row-finite, then $L_K(E)$ is hereditary, i.e., every left ideal of $L_K(E)$ is projective.  This presumably could make the search for projective resolutions of various $L_K(E)$-modules somewhat easier, in that the projectivity of left ideals is already a given.  However, much of the strength of our results lies in our explicit description of the kernels of germane maps; for instance, it is these explicit descriptions which will allow us to analyze the ${\rm Ext}^1$ groups of the Chen simple modules.   
  } 
 \end{remark}

A significant majority of the structural properties of a Leavitt path algebras  $L_K(E)$ do not rely on the specific structure of the field $K$.   The results contained in this article are no exceptions.     So while each of the statements of the results made herein should also contain the explicit hypothesis ``Let $K$ be any field", we suppress this statement throughout for efficiency.    For a field $K$, $K^\times$ denotes the nonzero elements of $K$.   

%
%
%
%

\section{Projective resolutions of Chen simple modules over $L_K(E)$}\label{projresSection}

The goal of this section is to present an explicit description  of a projective resolution of $S$, where $S$ is a Chen simple module over the Leavitt path algebra $L_K(E)$.   Such an explicit description will provide a strengthening of some previously established results (see  \cite[Proposition 4.1]{AR}), as well as provide the necessary foundation for subsequent results. 
As we shall see, the description of projective resolutions, as well as the  description of the ${\rm Ext}^1$ groups, of Chen simple modules will proceed based on which of the following three types describes the module:  
\begin{enumerate}
\item $V_{[w^\infty]}\cong L_K(E) w$ where $w$ is a sink,
\item $V_{[c^\infty]}$ where $c$ is a simple closed path;
\item  $V_{[q]}$ where $q$ is an irrational infinite path.  
\end{enumerate}

Let $v$ be any vertex in $E$.  Since $v$ is an idempotent, the left ideal $L_K(E)v$ is a projective left $L_K(E)$-module. Therefore projective resolutions of Chen simple modules of type 1 are easy:
\begin{proposition}[Type (1)]\label{projressink}
Let $w$ be a sink in $E$.  Then the Chen simple left module $V_{[w^\infty]}$ is projective. 
\end{proposition}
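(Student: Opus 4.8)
The plan is to show directly that the simple module $V_{[w^\infty]}$ is isomorphic to $L_K(E)w$ as a left $L_K(E)$-module, since $L_K(E)w$ is a projective left module (being a direct summand of the free module $L_K(E)$, as $w$ is idempotent). In fact this isomorphism is already recorded in the text above: Chen's construction attaches to each sink $w$ a simple module $\mathcal N_w$ with $\mathcal N_w \cong L_K(E)w$, and the excerpt explicitly adopts the convention $V_{[w^\infty]} = \mathcal N_w$. So strictly speaking the statement is immediate from the definitions and the cited fact from \cite{C}. To make the argument self-contained, however, I would spell out the isomorphism explicitly.

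First I would describe both modules concretely. On one side, $V_{[w^\infty]}$ has $K$-basis the tail-equivalence class $[w^\infty]$, which consists of all infinite paths eventually equal to $w^\infty = www\cdots$; since $w$ is a sink, every such infinite path is of the form $\alpha w^\infty$ for a (finite) path $\alpha$ in $E$ with $r(\alpha) = w$ (including $\alpha = w$ itself). On the other side, $L_K(E)w$ is spanned over $K$ by the paths $\alpha$ in $E$ with $r(\alpha) = w$: indeed a general element $\gamma\beta^* w$ forces $\beta^* w = \beta^*$ only when $\beta = w$ by relations (E2) and (V) (as $w$ is a sink there is no edge $e$ with $s(e) = w$, so any nontrivial $\beta$ gives $\beta^* w = 0$), hence $L_K(E)w = (\text{span of real paths }\alpha\text{ with }r(\alpha)=w)\cdot w$, and these are $K$-linearly independent by the injectivity of $KE \to L_K(E)$ quoted in the excerpt.

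Next I would define the map $\varphi\colon L_K(E)w \to V_{[w^\infty]}$ by $\varphi(\alpha w) = \alpha w^\infty$ on basis elements and extend $K$-linearly; this is clearly a $K$-linear bijection between the two bases just described. Then I would check it is a left $L_K(E)$-module homomorphism by verifying compatibility with the generators $v\in E^0$, $e\in E^1$, $e^*\in E^1$ acting on each basis element $\alpha w$, comparing the Leavitt-path-algebra multiplication $x\cdot(\alpha w)$ inside $L_K(E)w$ with the Chen action $x\cdot(\alpha w^\infty)$ of Definition \ref{Chendef}; the three cases match by inspection (e.g. $e^*\cdot(\alpha w) = \tau_{>1}(\alpha)w$ when $e$ is the first edge of $\alpha$ and $0$ otherwise, exactly mirroring $e^*\cdot(\alpha w^\infty)$, and using the CK1 relation). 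Alternatively, and more cleanly, I would invoke Schur's lemma: $V_{[w^\infty]}$ is simple by Theorem \ref{Chentheoremforsimples}, and the right-multiplication map $\rho_m\colon L_K(E)w \to V_{[w^\infty]}$ at the basis vector $m = w^\infty$ is a nonzero $L_K(E)$-homomorphism (it sends $w\mapsto w\cdot w^\infty = w^\infty \neq 0$), hence surjective; since $L_K(E)w$ is itself generated as a module by the idempotent $w$, any proper submodule lies in the kernel, and a dimension/basis count shows $\rho_{w^\infty}$ is in fact an isomorphism. Either way, $V_{[w^\infty]}\cong L_K(E)w$ is projective.

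I do not expect a genuine obstacle here: the only point requiring a small amount of care is the explicit identification of $L_K(E)w$ with the span of real paths ending at $w$, which uses that $w$ is a sink to kill all ghost-path contributions, together with the faithfulness of $KE\hookrightarrow L_K(E)$ to guarantee linear independence; both are routine given the relations and the results already quoted in the excerpt.
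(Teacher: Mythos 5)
Your proposal is correct and follows essentially the same route as the paper: the paper's proof is simply the observation that $V_{[w^\infty]} \cong L_K(E)w$ by Chen's result (indeed by the adopted convention $V_{[w^\infty]} = \mathcal{N}_w$), together with the remark preceding the proposition that $L_K(E)w$ is projective because $w$ is an idempotent. The additional explicit verification of the isomorphism you sketch is fine but not needed for the paper's argument.
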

\begin{proof}
We have $V_{[w^\infty]} \cong L_K(E)w$ as left $L_K(E)$-modules by \cite{C}.
\end{proof}

We now begin the process of describing projective resolutions of Chen simple modules of the second type, namely, of the form $V_{[c^\infty]}$ for $c$ a simple closed path.

\medskip

{\textbf{Notations:}}  Let $c=e_1e_2 \cdots e_t$ be a simple closed path in $E$, with $v=s(e_1) = r(e_t)$.

\begin{enumerate}
\item For $0 \leq i \leq t$   we define $c_i : = e_1 e_2 \cdots e_i$ and $d_i := e_{i+1} e_{i+2} \cdots e_t$ (where $c_0 = v = d_t$ and $c_t = c = d_0$).   Then clearly $c = c_i d_i$ for each $0\leq i \leq t$.  

\item For $n \geq 0$ we let $c^{-n}$ denote $(c^*)^n$, and let $c^0$ denote $v = s(c)$.

\item  An element $\mu$ of $L_K(E)$  is  said to be a {\it standard form monomial} in case there exist  $\alpha, \beta \in {\rm Path}(E)$ for which  $\mu = \alpha \beta^*$.    We denote the set of standard form monomials in $L_K(E)$ by $\mathcal{S}$.  For $\mu = \alpha \beta^*$ a standard form monomial we define $r(\mu):= r(\beta^*) = s(\beta)$; that is, $r(\mu)$ is the unique element $v\in E^0$ for which $\mu v = \mu$.  
Define 
$$\mathcal{S}_1(c) := \{\mu \in \mathcal{S} \ | \ \mu \cdot c^N = 0 \ \mbox{in} \  L_K(E) \ \mbox{for some} \ N \in \mathbb{N}\},  \ \mbox{and} \  \mathcal{S}_2(c) :=  \mathcal{S} \setminus \mathcal{S}_1(c).  $$
  Although   $\mathcal{S}_1(c)$ and $\mathcal{S}_2(c)$ depend on $c$, we will often simply write $\mathcal{S}_1$ and $\mathcal{S}_2$ for these sets. 
\end{enumerate}

\smallskip

By analyzing the form of monomials in $L_K(E)$, we get the following description of the elements of $\mathcal{S}_1$ and $\mathcal{S}_2$.    

\begin{lemma}\label{S_1andS_2description}  Let $ 0 \neq \mu \in \mathcal{S}$. If $c$ is a sink then $\mu \in \mathcal{S}_1$ if and only if $r(\mu) \neq c$.  If $c$ is a closed path $e_1e_2\cdots e_t$,  then
$\mu \in \mathcal{S}_1$ if and only if $\mu$ is of one of the following two forms:

\begin{enumerate}
\item $r(\mu) \neq s(c)$ (i.e.,  $\mu \cdot s(c) = 0$ in $L_K(E)$), or 
\item $\mu = \mu' f^* c_i^* (c^*)^n$ for some $n,i \in \Z^+$, some  $\mu' \in \mathcal{S}$,  and some $f \in E^1$ for which $s(f) =  s(e_{i+1})$ but $f\neq e_{i+1}$. 
\end{enumerate}
\noindent
Consequently, $0 \neq \mu \in \mathcal{S}_2$ if and only if $\mu = \alpha c_i^* (c^*)^n$ for some path $\alpha$ in $E$, and some pair of non-negative integers $n,i$.  
\end{lemma}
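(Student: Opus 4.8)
The statement characterizes $\mathcal S_1$ and $\mathcal S_2$ by working out exactly when a standard-form monomial $\mu=\alpha\beta^*$ annihilates all sufficiently large powers $c^N$. The plan is to reduce to understanding the product $\beta^* \cdot c^N$ inside $L_K(E)$, since $\mu\cdot c^N = \alpha(\beta^* c^N)$ and $\alpha$ (being a real path) kills a product only when the whole thing is already zero or when $r(\alpha)\neq$ the relevant source vertex — the latter being subsumed in condition (1) via $r(\mu)=s(\beta)$. So the crux is: for a ghost path $\beta^*$ with $\beta=f_1f_2\cdots f_k$, compute $\beta^*\cdot c^N$ for $N$ large. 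First I would record the basic rewriting rule: using (CK1), $f^* \cdot e = \delta_{f,e} r(e)$, so $\beta^* \cdot c^N$ is obtained by cancelling the edges of $\beta$ against the leading edges of $c^N = e_1e_2\cdots e_t e_1 e_2 \cdots$ one at a time, as long as they match; the product is $0$ the moment a mismatch occurs, and otherwise one is left with a tail of the form $c_i^{*}\,\cdots$ contributing, or a real-path tail of $c^N$.

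\textbf{Key steps.} (i) Handle the sink case separately: if $c$ is a sink then $c^N = c$ for all $N$ (as noted in the text, $w^i=w$), so $\mu\cdot c^N = \mu\cdot c = \mu$ if $r(\mu)=c$ and $0$ otherwise — giving the first sentence. (ii) For $c=e_1\cdots e_t$ a closed path, first dispose of the case $r(\mu)\neq s(c)$: then $\mu c^N = \mu s(c) c^N = 0$ already at $N$ with $N\ge 1$, so $\mu\in\mathcal S_1$; this is form (1). (iii) Assume $r(\mu)=s(c)=v$ and write $\mu=\alpha\beta^*$ with $s(\beta)=v$. Now analyze the cancellation of $\beta^*$ against $c^\infty$ (it suffices to work with $N$ large enough that $\beta$ is shorter than $c^N$). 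Either the edge-sequence of $\beta$ is an initial segment of $e_1e_2\cdots$ read cyclically — in which case no mismatch ever occurs, and for all large $N$ the product $\mu c^N$ is a nonzero monomial of the form $\alpha\, c_i^*(c^*)^n \cdot (\text{real tail of }c^N)$, which is nonzero for every large $N$, so $\mu\in\mathcal S_2$, and moreover $\mu=\alpha c_i^*(c^*)^n$ in $L_K(E)$ up to reindexing $\beta$ — giving the description of $\mathcal S_2$; or at some stage an edge $f$ of $\beta$ fails to match the corresponding $e_{i+1}$ (with $s(f)=s(e_{i+1})$, $f\neq e_{i+1}$), in which case $\beta^*$ has the form $\mu'' f^* c_i^* (c^*)^n$ after the matched cancellations, and every product with a large power $c^N$ vanishes, so $\mu\in\mathcal S_1$ in form (2). (iv) Conversely, check each listed form indeed lies in $\mathcal S_1$: form (1) is immediate, and for form (2), $f^* c_i^* (c^*)^n \cdot c^N = f^* \cdot e_{i+1}e_{i+2}\cdots e_t\, c^{N-n-1} = 0$ for $N>n$ because $f\neq e_{i+1}$ forces $f^* e_{i+1}=0$. (v) Finally deduce the ``consequently'' clause: $\mu\in\mathcal S_2$ iff $\mu$ avoids both forms, which by the cancellation analysis in (iii) is exactly saying $\mu=\alpha c_i^*(c^*)^n$ for a real path $\alpha$ and nonnegative integers $n,i$ — with the degenerate cases $n=0$ and/or $i=0$ recovering $\mu=\alpha$, $\mu=\alpha c_i^*$, etc.

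\textbf{Main obstacle.} The bookkeeping in step (iii) is the delicate part: I need to be careful that when $\beta$ matches an initial cyclic segment of $c^\infty$, the ``remainder'' genuinely has the stated normal form $c_i^*(c^*)^n$ — i.e., that the number of full $c$'s consumed is well-defined and that the leftover is a $c_i^*$ prefix-star rather than something that could rewrite further — and that this normal form is stable as $N\to\infty$ (multiplying by one more copy of $c$ on the right just lengthens the real tail). I also need to make sure the two forms (1) and (2) are genuinely exhaustive for $\mathcal S_1$, i.e., that a monomial with $r(\mu)=s(c)$ whose $\beta$-part is \emph{not} a cyclic prefix must exhibit a first mismatch of the precise shape in (2) (in particular that the mismatch happens against an edge $e_{i+1}$ of $c$, and that the part of $\beta^*$ before the mismatch is exactly $c_i^*(c^*)^n$ for some $n\ge 0$, $i\ge 0$, possibly after absorbing initial full loops). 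Once the cancellation rule $f^*e = \delta_{f,e}r(e)$ is applied systematically this is routine, but it is the step where sign-of-index errors or overlooked degenerate cases (e.g. $\beta$ empty, $\beta$ ending exactly at a loop boundary) are most likely.
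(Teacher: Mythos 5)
The paper states this lemma without proof (it is offered as a routine ``analysis of the form of monomials''), and your plan is precisely that intended verification: reduce to the edge-by-edge (CK1) cancellation of $\beta^*$ against $c^N$, dispose of the sink case and the case $r(\mu)\neq s(c)$, and for $r(\mu)=s(c)$ split according to whether $\beta$ is an initial segment of $c^\infty$ (giving $\mu=\alpha c_i^*(c^*)^n\in\mathcal S_2$, since then $\mu c^N$ is a genuine concatenated real path, nonzero because $KE$ embeds in $L_K(E)$) or has a first mismatch $f\neq e_{i+1}$ at position $nt+i+1$ (giving the factorization $\beta^*=(\cdot)^*f^*c_i^*(c^*)^n$ of form (2), which kills $c^N$ for $N>n$). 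This is correct, including the converse checks and the degenerate cases you flag; the only cosmetic points are that in form (2) the decomposition of $\beta^*$ is a factorization of the ghost path rather than a ``cancellation'' in $L_K(E)$, and that the paper's $n,i\in\Z^+$ must be read as allowing the value $0$, exactly as your $n\geq 0$, $i\geq 0$ bookkeeping does.
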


\begin{lemma}\label{M_1inR(c-1)Lemma}  Let $c$ be a closed path in the  graph $E$, and let $v = s(c)$.  

(1)   For any $z\in \mathbb{Z}$ we have $c^z - v \in L_K(E)(c-v)$.

(2)  Suppose $\mu \in \mathcal{S}_1(c)  $.  Then $\mu \in (\sum_{u\in E^0 \setminus \{v\}} L_K(E)u) \bigcup L_K(E)(c-v)$.   
\end{lemma}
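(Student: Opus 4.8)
The plan is to prove the two parts in order, with part (1) feeding directly into part (2).

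For part (1), the key observation is the telescoping identity. Write $v = s(c)$. For $z \geq 1$ I would observe that
$$c^z - v = (c^{z-1} + c^{z-2} + \cdots + c + v)(c - v),$$
which one checks by expanding the right-hand side: the cross terms cancel in pairs, using that $c v = c$ and $v c = c$ (which hold since $v = s(c) = r(c)$ for a closed path) and more generally $c^j v = c^j$. Hence $c^z - v \in L_K(E)(c-v)$. For $z = 0$ the statement is trivial since $c^0 - v = v - v = 0$. For $z \leq -1$, set $n = -z \geq 1$; I would use $c^z = (c^*)^n$ and apply the relation $c^* c = r(c) = v$ (which follows by iterating (CK1) along the edges of $c$, using that $c$ is a closed path so $r(e_t) = s(e_1)$). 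Multiplying the identity $c^n - v \in L_K(E)(c-v)$ on the left by $(c^*)^n$ and using $(c^*)^n c^n = v$, one gets $v - (c^*)^n \in (c^*)^n L_K(E)(c-v) \subseteq L_K(E)(c-v)$, so $c^z - v = (c^*)^n - v \in L_K(E)(c-v)$ as well. (If $c$ happens to be a sink rather than a proper closed path, then $c = v$ and everything is trivial; but in the intended application $c$ is a simple closed path, so I will assume $c$ is a genuine closed path of positive length.)

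For part (2), I would invoke the explicit description of $\mathcal{S}_1(c)$ from Lemma \ref{S_1andS_2description}. A nonzero $\mu \in \mathcal{S}_1$ is of one of two forms. In case (1) of that lemma, $r(\mu) = u$ for some $u \in E^0 \setminus \{v\}$, so $\mu = \mu u \in L_K(E)u \subseteq \sum_{u \in E^0 \setminus\{v\}} L_K(E)u$ and we are done. In case (2), $\mu = \mu' f^* c_i^* (c^*)^n$ with $s(f) = s(e_{i+1})$, $f \neq e_{i+1}$, so $f^* e_{i+1} = 0$ by (CK1); the plan is to show $\mu \in L_K(E)(c - v)$. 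Write $w = r(\mu) = s((c^*)^n) = r(c^*) = v$ after the $(c^*)^n$ factor lands back at $v$; more precisely $r(\mu) = v$ here. Now consider $\mu(c - v) = \mu c - \mu$. I claim $\mu c = 0$: indeed $\mu c = \mu' f^* c_i^* (c^*)^n c = \mu' f^* c_i^* (c^*)^{n-1}(c^* c) = \mu' f^* c_i^* (c^*)^{n-1} v$, and then $(c^*)^{n-1} v c\cdots$ — wait, I must be careful; rather I would directly compute $c_i^* (c^*)^n c$. Since $c^* c = v$ we get $(c^*)^n c = (c^*)^{n-1}v = (c^*)^{n-1}$, so $\mu c = \mu' f^* c_i^* (c^*)^{n-1}$. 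That is $\mu \cdot c^{-1}$ essentially, which is again in $\mathcal{S}_1$ (it has the same tail-killing property), but this does not immediately show $\mu \in L_K(E)(c-v)$. Instead, the cleaner route: using part (1) with $z = -n$, write $(c^*)^n = v + x(c-v)$ for some $x \in L_K(E)$; then $\mu = \mu' f^* c_i^* (c^*)^n = \mu' f^* c_i^*\big(v + x(c-v)\big) = \mu' f^* c_i^* v + \mu' f^* c_i^* x (c - v)$. The second summand lies in $L_K(E)(c - v)$. For the first summand, $\mu' f^* c_i^* v = \mu' f^* c_i^*$ (since $r(c_i^*) = s(c_i) = s(e_1) = v$), and this is a standard form monomial whose range is $r(f^*) = s(f) = s(e_{i+1})$; if $i < t$ then $s(e_{i+1}) \neq v$ in general — hmm, not necessarily. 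So I would handle the leftover $\mu' f^* c_i^*$ by noting $\mu' f^* c_i^* = \mu' f^* c_i^* \cdot s(e_{i+1})$, and splitting on whether $s(e_{i+1}) = v$ or not: if $s(e_{i+1}) \neq v$ it lies in $\sum_{u \neq v} L_K(E)u$; if $s(e_{i+1}) = v$, then since $f \neq e_{i+1}$ and $s(f) = v = s(e_1)$ we can further reduce using (CK2) at $v$, or more simply observe $i = 0$ forces $\mu = \mu' f^*(c^*)^n$ with $f^*$ a ghost edge out of $v$, $f \neq e_1$; then $\mu' f^*$ has range $r(f) \neq v$ unless there is a further cycle — at which point I would iterate.

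**The main obstacle.** The genuinely delicate part is the bookkeeping in case (2): after pulling out the $(c - v)$ factor via part (1), one is left with a shorter monomial of the same type, and one must argue by induction (on $n$, or on the length of $\mu$) that the remainder ultimately lands in $\sum_{u \neq v} L_K(E)u$. So the real structure of the proof is: part (1) gives the telescoping trick, and part (2) is an induction that peels off one copy of $c^*$ at a time, each peel contributing a summand in $L_K(E)(c-v)$ via part (1) and reducing to a strictly smaller instance, with the base case being a monomial whose range is not $v$ (handled by Lemma \ref{S_1andS_2description}(1)). I expect the careful verification that the induction terminates correctly — in particular ruling out that the remainder stays "stuck" at range $v$ forever — to be where the actual work lies; the algebra identities ($c^*c = v$, $f^* e_{i+1} = 0$, telescoping) are routine once set up.
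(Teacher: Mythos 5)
Part (1) of your proposal is correct and is essentially the paper's argument: telescoping for $z>0$, triviality for $z=0$, and reduction of the case $z<0$ to the positive case via $(c^*)^nc^n=v$ (the paper writes this as $c^z-v=-c^z(c^{-z}-v)$, which is the same computation).

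Part (2), however, is where your proposal has a genuine gap, and it stems from taking a harder route than necessary. You route the argument through the structural description in Lemma \ref{S_1andS_2description} and then try to peel off copies of $c^*$ one at a time, but you never complete this: you explicitly leave unresolved whether the iteration terminates and whether the remainder can stay ``stuck'' at range $v$, and along the way you misidentify the range of the leftover monomial $\mu' f^* c_i^*$ (its range is $s(c_if)=v$, not $s(e_{i+1})$), which is what sends you into the inconclusive case analysis. The paper avoids all of this by using only the \emph{defining} property of $\mathcal{S}_1(c)$: if $r(\mu)\neq v$ then $\mu\in\sum_{u\neq v}L_K(E)u$; otherwise $\mu v=\mu$ and $\mu c^N=0$ for some $N$, so $\mu=-\mu(c^N-v)\in L_K(E)(c^N-v)\subseteq L_K(E)(c-v)$ by part (1). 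That two-line argument is the missing idea. (Incidentally, even within your own scheme the peeling would stop after one step: since $f^*e_{i+1}=0$ one gets $\mu'f^*c_i^*\,c=\mu'f^*d_i=0$, so the remainder is itself annihilated by $c$ and hence equals $-\mu'f^*c_i^*(c-v)\in L_K(E)(c-v)$; but no induction is needed at all if you argue from $\mu c^N=0$ directly.)
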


\begin{proof}  (1)    If $z=0$ we have $s(c) - v = 0 = 0(c-v).$   For $z>0$ we have $c^z - v = (c^{z-1} + c^{z-2} + \hdots + c + v)(c-v)$.   For $z<0$ we have $c^z - v = - c^z(c^{-z} - v) $, which is in $L_K(E)(c-v)$ by the previous case.  

(2)  Suppose  $\mu \in \mathcal{S}_1(c)$.  If $r(\mu) \neq v$ then $\mu \in \sum_{u\in E^0 \setminus \{v\}} L_K(E)u$.   On the other hand, suppose $r(\mu) = v$, and that   $\mu \cdot c^N = 0$ for some $N \in \mathbb{N}$.  But $r(\mu)=v$ gives $\mu v = \mu$, so that  with the hypothesis  $\mu = - \mu(c^N - v)$, 
which gives that $\mu \in L_K(E)(c^N - v) \subseteq L_K(E)(c-v)$ by the previous paragraph.  
\end{proof}

\begin{remark} \label{pathsremark}
{\rm Let $p=e_1e_2\cdots$ be an infinite path in $E$. If $p=\tau_{>r}(p)$ for some $r>0$, then $p$ is a rational path of the form $p=c^{\infty}$, where $c$ is the closed path
$e_1e_2\cdots e_r$.  This follows from the observation that $p=\tau_{>r}(p)$ implies $p=\tau_{>ir}(p)$ for all $i\in \N$. }
\end{remark}

\begin{lemma}\label{pcinftyequalsqcinftyLemma}
Let  $c$ be a simple closed path $e_1 e_2 \cdots e_t$ in $E$ with $s(c)=r(c)=v$.  
Suppose $\alpha$ and $\beta$ are  paths in $E$ for which $0 \neq \alpha \cdot c^\infty = \beta \cdot c^\infty$ in $V_{[c^\infty]}$. 
Then there exists $N\in \mathbb{Z}^+$ for which $\alpha = \beta c^N$ or $\beta = \alpha c^N$.   

Consequently, $\alpha \cdot c^\infty = \beta \cdot c^\infty$  in $V_{[c^\infty]}$  implies 
$\alpha - \beta \in L_K(E)(c-v)$.
\end{lemma}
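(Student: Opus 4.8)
The plan is to show that if $\alpha\cdot c^\infty = \beta\cdot c^\infty \neq 0$ in $V_{[c^\infty]}$, then one of $\alpha,\beta$ is obtained from the other by appending a power of $c$; the final assertion about $L_K(E)(c-v)$ will then follow immediately. First I would unwind what the equation $\alpha\cdot c^\infty = \beta\cdot c^\infty$ means in terms of the action in Definition \ref{Chendef}: since $c^\infty$ is the infinite path $e_1e_2\cdots e_t e_1 e_2\cdots$, the element $\alpha\cdot c^\infty$ is nonzero exactly when $r(\alpha)=s(c)=v$, in which case it equals the infinite path $\alpha c^\infty$ (the concatenation), and similarly for $\beta$. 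So the hypothesis says $\alpha c^\infty$ and $\beta c^\infty$ are \emph{equal as elements of $E^\infty$}, i.e. equal edge-by-edge, and both are nonzero so $r(\alpha)=r(\beta)=v$.

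Next I would compare the two infinite edge-sequences. Write $\ell_\alpha$ and $\ell_\beta$ for the lengths of $\alpha$ and $\beta$; without loss of generality $\ell_\alpha \leq \ell_\beta$. Reading off the first $\ell_\alpha$ edges of both sides, we get $\alpha = \tau_{\leq \ell_\alpha}(\beta c^\infty)$, so $\alpha$ is a prefix of $\beta c^\infty$; in particular $\alpha$ is a prefix of $\beta$ if $\ell_\alpha\le\ell_\beta$... but that need not literally hold since $\ell_\beta$ could exceed $\ell_\alpha$ only slightly or by a lot — more precisely $\alpha$ is the initial segment of the path $\beta c^\infty$ of length $\ell_\alpha$. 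Cancelling: equality of $\alpha c^\infty$ and $\beta c^\infty$ as infinite paths, together with $\alpha$ being an initial segment of $\beta c^\infty$, gives that $\beta c^\infty = \alpha c^\infty$ means $\tau_{>\ell_\alpha}(\beta c^\infty) = c^\infty$ — wait, I should be careful: I would instead argue that since both sides agree and $\alpha$ is a prefix of (the path underlying) $\beta c^\infty$, writing $\beta c^\infty = \alpha\cdot\gamma$ for the appropriate infinite path $\gamma$, the equation forces $\gamma = c^\infty$, hence $\beta c^\infty = \alpha c^\infty$ gives $\beta c^\infty = \alpha c^\infty$ and by left-cancellation in the free monoid on edges (applied to the finite prefixes), $\beta$ together with an initial chunk of $c^\infty$ reconstitutes $\alpha$, or vice versa. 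Concretely: the finite path $\beta$ is a prefix of $\alpha c^\infty$ (reading the first $\ell_\beta$ edges), so $\beta = \alpha c^k c_i$ for some $k\ge 0$ and some proper prefix $c_i$ of $c$ (using the notation $c=c_id_i$); substituting back into $\beta c^\infty = \alpha c^\infty$ forces $c_i d_i \cdots = d_i\cdots$... Let me instead just say: comparing the two sequences and using that $c$ is \emph{simple} (primitive, so $c$ has no nontrivial self-overlap issues for this purpose), the only way $\alpha c^\infty$ and $\beta c^\infty$ coincide is that $\ell_\beta - \ell_\alpha$ is a multiple of $t = \mathrm{len}(c)$, say $\ell_\beta - \ell_\alpha = Nt$ with $N\ge 0$, and then $\beta = \alpha c^N$. (If $\ell_\alpha = \ell_\beta$ then $\alpha=\beta$, the case $N=0$.)

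Having established $\beta = \alpha c^N$ (or symmetrically $\alpha = \beta c^N$), the consequence is immediate: $\alpha - \beta = \alpha - \alpha c^N = \alpha(v - c^N) = -\alpha(c^N - v)$, and by Lemma \ref{M_1inR(c-1)Lemma}(1) we have $c^N - v \in L_K(E)(c-v)$, hence $\alpha - \beta \in L_K(E)(c-v)$ since this is a left ideal. The symmetric case is identical up to sign.

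\textbf{Expected main obstacle.} The one place requiring genuine care is the combinatorial cancellation step: extracting from the equality $\alpha c^\infty = \beta c^\infty$ in $E^\infty$ the conclusion that the length difference is an exact multiple of $\mathrm{len}(c)$. This is where primitivity of $c$ (its being a \emph{simple} closed path) is used — one must rule out that $\beta$ "lands in the middle of a copy of $c$" relative to $\alpha$. The clean way is: from $\beta c^\infty = \alpha c^\infty$ with $\ell_\alpha\le\ell_\beta$, write $\beta = \alpha\,\delta$ where $\delta$ is the path consisting of edges $\ell_\alpha+1$ through $\ell_\beta$ of the common infinite path; this $\delta$ is then an initial segment of $c^\infty$, so $\delta = c^k c_i$ for unique $k\ge0$ and $0\le i<t$. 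Cancelling $\alpha$ from both sides (legitimate since these are literal edge sequences) gives $c^\infty = \delta c^\infty = c^k c_i c^\infty$, whence $c^\infty = c_i c^\infty$, i.e. $c_i$ is a prefix of $c^\infty$ with $c^\infty = c_i c^\infty$; reading the next $t$ edges shows $c = c_i d_i$ with $c_i d_i = d_i c_i$... and simplicity of $c$ forces $c_i = v$ (i.e. $i=0$). Then $\delta = c^k$ and $\beta = \alpha c^k$, giving $N=k$. All other steps — interpreting the module action, the left-ideal membership — are routine given the results already in the excerpt.
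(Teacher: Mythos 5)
Your argument is correct and follows essentially the paper's own route: write the longer path as the shorter one times $c^k c_i$ with $c_i$ a proper prefix of $c$, cancel the common finite prefix to get $c_i\, c^\infty = c^\infty$, use simplicity of $c$ to force $c_i = v$ (so the length difference is an exact multiple of the length of $c$), and then deduce $\alpha-\beta\in L_K(E)(c-v)$ from Lemma \ref{M_1inR(c-1)Lemma}(1) exactly as the paper does. The only cosmetic difference is the final micro-step: you justify $c_i=v$ via the commutation $c_id_i=d_ic_i$ (commuting words are powers of a common word), while the paper deduces $\tau_{>k}(c^\infty)=c^\infty$ and applies Remark \ref{pathsremark}; these are the same periodicity-versus-simplicity argument.
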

\begin{proof} Assume $\alpha=f_1f_2\cdots f_\ell$ and $\beta=g_1g_2\cdots g_m$,  where the  $f_i$ and  $g_j$ are edges in $E$. If $\ell=m$, from $\alpha \cdot c^\infty = \beta \cdot c^\infty$ we get $\alpha = \beta$. So assume $m>\ell$; we have 
\[g_1\cdots g_\ell g_{\ell+1}\cdots g_m=f_1\cdots f_\ell c^n {e_1}{e_2}\cdots {e_k}\]
with $m-\ell=k+n\times t$, $k\leq t$. If $k<t$, from $\alpha \cdot c^\infty = \beta \cdot c^\infty$ we get
\[c^\infty=c_{k+1}\cdots c_t\cdot c^\infty=\tau_{>k} (c^{\infty});\]
then by Remark \ref{pathsremark} we would  have $c^\infty=(e_1\cdots e_k)^\infty$, a contradiction since $c$ is simple. Therefore $k=t$ and $\beta=\alpha c^{n+1}$. The case $m< \ell$ is identical.


For the second statement, note that $0 \neq \alpha \cdot c^\infty = \beta \cdot c^\infty$ gives $r(\alpha) = r(\beta)$; denote this common vertex by $v$.   So  if $\alpha = \beta c^n$ then $\alpha - \beta = \beta (c^n - v)$, which is in $ L_K(E)(c-v)$ by Lemma \ref{M_1inR(c-1)Lemma}(1).  The case $\beta = \alpha c^n$ is identical.  
   \end{proof}
   
\begin{proposition}    \label{Vsubcinftypropforgeneralgraphs}  Let $E$ be any  graph.
Let $c$ be a simple closed path in  $E$, and let $v$ denote  $s(c) = r(c)$. 
Let $\rho_{c^\infty}: L_K(E)v \rightarrow V_{[c^\infty]}$ and
$\hat{\rho}_{c^\infty}: L_K(E) \rightarrow V_{[c^\infty]}$ denote the right multiplication by $c^\infty$.
Then 
$${\rm Ker}(\rho_{c^\infty}) =   L_K(E)(c-v)\quad\text{and}\quad
 {\rm Ker}(\hat{\rho}_{c^\infty}) = \left(\sum_{u\in E^0 \setminus \{v\}} L_K(E)u \right) \oplus  L_K(E)(c-v).$$
\end{proposition}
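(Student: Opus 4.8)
The plan is to establish the description of $\Ker(\rho_{c^\infty})$ first, and then deduce the description of $\Ker(\hat{\rho}_{c^\infty})$ from it. For the deduction, recall that $L_K(E)=\bigoplus_{u\in E^0}L_K(E)u$ as a left $L_K(E)$-module; since $v=s(c^\infty)$ we have $v\cdot c^\infty=c^\infty$, so $\hat{\rho}_{c^\infty}(x)=x\cdot c^\infty=(xv)\cdot c^\infty=\rho_{c^\infty}(xv)$ for every $x\in L_K(E)$. Writing $x=xv+\sum_{u\neq v}xu$ and using that $L_K(E)(c-v)\subseteq L_K(E)v$ (because $cv=c$), one gets $x\in\Ker(\hat{\rho}_{c^\infty})$ if and only if $xv\in\Ker(\rho_{c^\infty})$; once $\Ker(\rho_{c^\infty})=L_K(E)(c-v)$ is known this yields $\Ker(\hat{\rho}_{c^\infty})=\bigl(\sum_{u\neq v}L_K(E)u\bigr)\oplus L_K(E)(c-v)$, the sum being direct because $L_K(E)v\cap\sum_{u\neq v}L_K(E)u=0$.

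For $\Ker(\rho_{c^\infty})=L_K(E)(c-v)$, the inclusion $\supseteq$ is immediate: $(c-v)\cdot c^\infty=c\cdot c^\infty-v\cdot c^\infty=c^\infty-c^\infty=0$, and $\Ker(\rho_{c^\infty})$ is a left ideal containing $c-v$. For $\subseteq$, I would take $x\in L_K(E)v$ with $x\cdot c^\infty=0$, write $x=\sum_i k_i\mu_i$ with $k_i\in K^\times$ and $\mu_i\in\mathcal{S}$, and, discarding those $\mu_i$ with $r(\mu_i)\neq v$ (which contribute $0$ to $x=xv$), assume $r(\mu_i)=v$ for all $i$. The heart of the argument is the claim that, modulo $L_K(E)(c-v)$, every standard form monomial $\mu$ with $r(\mu)=v$ is congruent either to $0$ or to a single real path ending at $v$. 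If $\mu\in\mathcal{S}_1(c)$ this follows from Lemma~\ref{M_1inR(c-1)Lemma}(2), since $r(\mu)=v$ excludes the summand $\sum_{u\neq v}L_K(E)u$, so that $\mu\equiv 0$. If $\mu\in\mathcal{S}_2(c)$, then by Lemma~\ref{S_1andS_2description} we may write $\mu=\alpha c_i^*(c^*)^n$ for a path $\alpha$ and integers $n,i\geq 0$, and a short calculation from the defining relations, using $c^*c=v$, $(c^*)^nc^n=v$ and $c_i^*c=d_i$ (the last from $c=c_id_i$ together with $c_i^*c_i=s(d_i)$), gives $\mu\,c^{n+1}=\alpha d_i$, a path ending at $v$; since $r(\mu)=v$ this yields $\mu-\alpha d_i=-\mu(c^{n+1}-v)\in L_K(E)(c-v)$ by Lemma~\ref{M_1inR(c-1)Lemma}(1). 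Applying this termwise, $x\equiv z\pmod{L_K(E)(c-v)}$, where $z=\sum_\ell\lambda_\ell\delta_\ell$ is a $K$-linear combination of pairwise distinct real paths $\delta_\ell$ ending at $v$ with $\lambda_\ell\in K^\times$, and $z\cdot c^\infty=x\cdot c^\infty=0$ since $L_K(E)(c-v)\subseteq\Ker(\rho_{c^\infty})$.

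It then remains to show $z\in L_K(E)(c-v)$. Here I would partition the indices $\ell$ into blocks according to the value $\delta_\ell\cdot c^\infty=\delta_\ell c^\infty\in E^\infty$; since distinct such values are distinct basis vectors of $V_{[c^\infty]}$, inside each block the corresponding $\lambda_\ell$ sum to $0$. Fixing a block $B$ and $\ell_0\in B$ with $\delta_{\ell_0}$ of minimal length in $B$, Lemma~\ref{pcinftyequalsqcinftyLemma} (which is exactly where the simplicity of $c$ enters) forces $\delta_\ell=\delta_{\ell_0}c^{m_\ell}$ with $m_\ell\geq 0$ for every $\ell\in B$, because the alternative $\delta_{\ell_0}=\delta_\ell c^{m}$ with $m\geq 1$ would contradict minimality. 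Then
\[
\sum_{\ell\in B}\lambda_\ell\delta_\ell=\delta_{\ell_0}\sum_{\ell\in B}\lambda_\ell c^{m_\ell}=\delta_{\ell_0}\sum_{\ell\in B}\lambda_\ell(c^{m_\ell}-v)+\Bigl(\sum_{\ell\in B}\lambda_\ell\Bigr)\delta_{\ell_0},
\]
whose second summand is $0$ and whose first summand lies in $L_K(E)(c-v)$, since $c^{m_\ell}-v\in L_K(E)(c-v)$ by Lemma~\ref{M_1inR(c-1)Lemma}(1) and $L_K(E)(c-v)$ is a left ideal. Summing over all blocks gives $z\in L_K(E)(c-v)$, hence $x\in L_K(E)(c-v)$, which finishes $\subseteq$ and the proposition.

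I expect the main obstacle to be essentially computational: carefully deriving the identities $c^*c=v$, $(c^*)^nc^n=v$, and $c_i^*c=d_i$ from relations (V), (E1), (E2), (CK1), and handling the degenerate values $i\in\{0,t\}$ and $n=0$ consistently when reducing $\mathcal{S}_2(c)$-monomials to real paths modulo $L_K(E)(c-v)$. Beyond that, the only conceptual inputs are Lemma~\ref{pcinftyequalsqcinftyLemma} on when two real paths act equally on $c^\infty$ (encoding simplicity of $c$) and the routine bookkeeping of collecting repeated paths and discarding zero coefficients before forming the blocks.
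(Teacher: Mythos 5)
Your proof is correct and follows essentially the same route as the paper: reduce modulo $J=L_K(E)(c-v)$ using Lemmas \ref{S_1andS_2description} and \ref{M_1inR(c-1)Lemma} to replace each monomial by a real path $\alpha d_i$ ending at $v$, then combine Lemma \ref{pcinftyequalsqcinftyLemma} with the linear independence of distinct infinite paths in $V_{[c^\infty]}$ to see the coefficients in each block sum to zero, and finally deduce the $\hat{\rho}_{c^\infty}$ statement from $L_K(E)=\bigl(\sum_{u\neq v}L_K(E)u\bigr)\oplus L_K(E)v$. Your minimal-length-representative factorization $\delta_\ell=\delta_{\ell_0}c^{m_\ell}$ is just a mild variant of the paper's use of the second statement of Lemma \ref{pcinftyequalsqcinftyLemma} to identify all block members modulo $J$.
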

\begin{proof}
  Since $(c-v)\cdot c^\infty = c^\infty - c^\infty = 0$ in $V_{[c^\infty]}$, we get $
  L_K(E)(c-v) \subseteq {\rm Ker}(\rho_{c^\infty})$.  We now proceed to show   
 that $\Ker (\rho_{c^\infty}) \subseteq 
 L_K(E)(c-v)$.   For notational convenience we denote the left ideal $ L_K(E)(c-v)$ of $L_K(E)$ by $J$.  
 

So let $\lambda \in \Ker (\rho_{c^\infty})$, and   write 
$$\lambda = \sum_{\mu \in \M}k_\mu \mu$$
 where $\M \subseteq \mathcal{S}$ is some finite set of distinct standard form monomials in $L_K(E)$, and $k_\mu \in K^\times$.   By Lemma \ref{M_1inR(c-1)Lemma} we may assume that $\M \subseteq \mathcal{S}_2$; that is, by Lemma \ref{S_1andS_2description}, we may assume that, for each $\mu \in \M$, $\mu = \alpha_\mu c_{i_\mu}^*(c^*)^{n_\mu}$ for some path $\alpha_\mu$, some $0\leq i_\mu \leq t$, and some $n_\mu \geq 0$.  

So we have $\lambda = \sum_{\mu \in \M}k_\mu \alpha_\mu c_{i_\mu}^*(c^*)^{n_\mu}$.   By hypothesis $\lambda \cdot c^\infty = 0$ in $V_{[c^\infty]}$, so that
 $$ \sum_{\mu \in \M}k_\mu \alpha_\mu c_{i_\mu}^*(c^*)^{n_\mu}\cdot c^\infty = 0 \ \  \mbox{in} \ V_{[c^\infty]}.$$
 But $(c^*)^{n}\cdot c^\infty = c^\infty$ in $V_{[c^\infty]}$ for any $n\in \mathbb{Z}$.  So
  $$ \sum_{\mu \in \M}k_\mu \alpha_\mu c_{i_\mu}^*\cdot c^\infty = 0 \ \  \mbox{in} \ V_{[c^\infty]}.$$
 Also, $c_i^* \cdot c^\infty = d_i \cdot c^\infty$ in $V_{[c^\infty]}$ for any $0\leq i \leq t$.   So
  $$ \sum_{\mu \in \M}k_\mu \alpha_\mu d_{i_\mu} \cdot c^\infty = 0 \ \  \mbox{in} \ V_{[c^\infty]}.$$
  Now define 
  $$\lambda^\prime = \sum_{\mu \in \M}k_\mu \alpha_\mu d_{i_\mu}.$$
Then the previous equation gives that $\lambda^\prime \in {\rm Ker}(\rho_{c^\infty})$.  

  We claim that $\lambda \in J$ if and only if $\lambda^\prime \in J$.   To show this, we show that $\overline{\lambda} = \overline{\lambda^\prime}$ as elements of $L_K(E) / J$.     
  We note first that $\overline{c_i^*} = \overline{d_i}$ in $L_K(E)/J$; this follows immediately from the observation that $d_i - c_i^* = c_i^*(c-v)\in J$.    But then in $L_K(E)/J$ we have 
\begin{align*}
  \overline{\lambda}  &=  \overline{\sum_{\mu \in \M}k_\mu \alpha_{\mu} c_{i_\mu}^*(c^*)^{n_\mu}}  \\
 &=\sum_{\mu \in \M}k_\mu \alpha_{\mu} c_{i_\mu}^*\overline{(c^*)^{n_\mu}} \\ 
 & =\sum_{\mu \in \M}k_\mu \alpha_{\mu} c_{i_\mu}^*\overline{v}  \ \ \ \ \ \ \ \ \mbox{by Lemma \ref{M_1inR(c-1)Lemma}(1)} \\
 &  =\sum_{\mu \in \M}k_\mu \alpha_{\mu} \overline{c_{i_\mu}^*} \\
 &   =\sum_{\mu \in \M}k_\mu \alpha_{\mu} \overline{d_{i_\mu}}    \ \ \ \ \ \ \ \ \ \ \mbox{by the above note} \\
  &   = \overline{\sum_{\mu \in \M}k_\mu \alpha_{\mu} d_{i_\mu}} \\
  & = \overline{\lambda^\prime}.\\
\end{align*}
Thus in order to show that $\lambda \in J$, it suffices to show that $\lambda^\prime \in J$, i.e., that $\overline{\lambda^\prime} = \overline{0}$ in $L_K(E)/J$.   But $\lambda^\prime \in {\rm Ker}(\rho_{c^\infty})$, i.e., $\sum_{\mu \in \M}k_\mu \alpha_{\mu} d_{i_\mu} \cdot c^\infty = 0$ in $V_{[c^\infty]}$.  
 Now partition $\M = \sqcup_{t=1}^\ell \M_t$ in such a way that $\mu \sim \mu^\prime \in \mathcal{M}_t$ (for some $t$)  if and only if  $\alpha_{\mu} d_{i_\mu} \cdot c^\infty = \alpha_{\mu^\prime} d_{i_{\mu^\prime}} \cdot c^\infty$   in $V_{[c^\infty]}$.  
 
 By Lemma \ref{pcinftyequalsqcinftyLemma}, if $\mu \sim \mu^\prime$      then  
 $\overline{\alpha_{\mu} d_{i_\mu}} = \overline{\alpha_{\mu^\prime} d_{i_{\mu^\prime}}}$
  in $L_K(E)/J$; we denote this common element of $L_K(E)/J$ by $\overline{x_t}$. 
 
 Now
 $\sum_{\mu \in \M}k_\mu \alpha_{\mu} d_{i_\mu} \cdot c^\infty = 0$ gives
 $$\sum_{t=1}^\ell \sum_{\mu \in \M_t}k_\mu \alpha_{\mu} d_{i_\mu} \cdot c^\infty = 0,$$
 which by the linear independence of sets of distinct elements of the form $\alpha \cdot c^\infty$ in $V_{[c^\infty]}$ gives  $\sum_{\mu \in \M_t}k_\mu = 0$ for each $1\leq t \leq \ell$.  But then 
\begin{align*} 
\overline{\lambda^\prime} &  = \overline{\sum_{\mu \in \M}k_\mu \alpha_{\mu} d_{i_\mu}}  = \overline{\sum_{t=1}^\ell \sum_{\mu \in \M_t}k_\mu \alpha_{\mu} d_{i_\mu}} \\
& =   \sum_{t=1}^\ell \sum_{\mu \in \M_t}k_\mu \overline{\alpha_{\mu} d_{i_\mu}} =  \sum_{t=1}^\ell \sum_{\mu \in \M_t}k_\mu \overline{x_t} \\
& =\sum_{t=1}^\ell (\sum_{\mu \in \M_t}k_\mu) \overline{x_t} =\sum_{t=1}^\ell (0) \overline{x_t} = \overline{0}, \\
 \end{align*}
 which establishes that   ${\rm Ker}(\rho_{c^\infty}) \subseteq  
 L_K(E)(c-v)$, as desired.   The claim about $\hat\rho_{c^\infty}$ follows easily from $L_K(E)=\sum_{u\in E^0\setminus \{v\}}L_K(E)u\oplus L_K(E)v$.
 \end{proof}

\begin{lemma}\label{R(c-1)isoR}  Let $E$ be any graph.
Let $c$ be a  simple closed path in $E$ based at the vertex $v$, and let $r\in L_K(E)v$.  Then  $r(c-v) = 0$ in $L_K(E)$ if and only if $r=0$.  In particular, the map $$\rho_{c-v}: L_K(E)v \rightarrow L_K(E)(c-v)$$ is an isomorphism of left $L_K(E)$-modules. 

Furthermore, if $E$ is a finite graph, then 
 the  map $$\hat{\rho}_{c-1}: L_K(E) \rightarrow L_K(E)(c-1)$$ is an isomorphism of left $L_K(E)$-modules.

\end{lemma}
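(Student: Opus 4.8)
The plan is to reduce both isomorphism claims to the single assertion that $r\in L_K(E)$ with $rc=r$ forces $r=0$, and then to prove that assertion by a short argument using the canonical $\mathbb Z$-grading of $L_K(E)$.

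First I would observe that $\rho_{c-v}$ is a homomorphism of left $L_K(E)$-modules, since right multiplication commutes with the left action, and that it is surjective onto $L_K(E)(c-v)$ by construction; thus only injectivity is at issue, and similarly for $\hat\rho_{c-1}$ once $E$ is finite, since then $L_K(E)$ is unital with $1=\sum_{u\in E^0}u$. For $r\in L_K(E)v$ one has $rv=r$, so $\rho_{c-v}(r)=rc-rv=rc-r$, whence $\rho_{c-v}(r)=0$ exactly when $rc=r$. Likewise $\hat\rho_{c-1}(r)=rc-r$ for all $r\in L_K(E)$ when $L_K(E)$ is unital. So it suffices to show: if $rc=r$ then $r=0$.

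For this I would invoke the standard $\mathbb Z$-grading $L_K(E)=\bigoplus_{n\in\mathbb Z}L_K(E)_n$ in which vertices have degree $0$, each $e\in E^1$ has degree $1$, and each $e^*$ has degree $-1$ (the relations (V), (E1), (E2), (CK1), (CK2) being homogeneous, this is well defined); the real path $c=e_1\cdots e_t$ is then homogeneous of degree $t\ge 1$. Given $r\ne 0$ with $rc=r$, write $r=\sum_n r_n$ with $r_n\in L_K(E)_n$, only finitely many nonzero, and put $a=\min\{n:r_n\ne 0\}$. The degree-$a$ homogeneous component of $rc$ is $r_{a-t}c$, while that of $r$ is $r_a$, so $rc=r$ yields $r_{a-t}c=r_a$; since $a-t<a$, minimality of $a$ gives $r_{a-t}=0$, hence $r_a=0$, a contradiction. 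Therefore $r=0$, and we conclude that $\rho_{c-v}$ (and, for $E$ finite, $\hat\rho_{c-1}$) is a bijective module homomorphism onto $L_K(E)(c-v)$ (resp.\ $L_K(E)(c-1)$), i.e.\ an isomorphism.

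I do not expect a serious obstacle, but the point that needs care is the direction of the grading comparison: one must examine the lowest-degree component of the identity $rc=r$, where the relation $r_{a-t}c=r_a$ produces the contradiction, rather than the top-degree component — right multiplication by $c$ need not be injective (for instance $f^*c=0$ whenever $f\in E^1$, $s(f)=s(c)$, $f\ne e_1$), so a top-degree argument would not close.
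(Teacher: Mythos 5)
Your proof is correct, but it takes a genuinely different route from the paper's. The paper argues from $r(c-v)=0$ that $rc^j=r$ for every $j\geq 1$, then multiplies by $c^N$ with $N$ at least the maximal length of the ghost parts $\beta_i^*$ in $r=\sum_i k_i\alpha_i\beta_i^*$, so that each $\beta_i^*c^N$ is either $0$ or a real path; hence $r=rc^N$ lies in the image of the path algebra $KE$ inside $L_K(E)$ (this uses the injectivity of $KE\rightarrow L_K(E)$), where the equation $rc=r$ forces $r=0$ by a degree argument in $KE$. You instead invoke the canonical $\mathbb{Z}$-grading of $L_K(E)$ itself and compare homogeneous components of $rc=r$ at the \emph{minimal} degree $a$, getting $r_{a-t}c=r_a$ with $r_{a-t}=0$ by minimality, a contradiction; your caveat is exactly right, since a top-degree comparison only yields $r_bc=0$, which is inconclusive because right multiplication by $c$ is not injective. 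Your argument is shorter, treats $\rho_{c-v}$ and $\hat\rho_{c-1}$ uniformly for arbitrary $r\in L_K(E)$, and avoids the appeal to the embedding $KE\hookrightarrow L_K(E)$, at the (standard) cost of citing the $\mathbb{Z}$-graded structure of $L_K(E)$; the paper's proof is more hands-on with monomials and only uses degrees in the ordinary path algebra. One small point you pass over quickly: surjectivity of $\rho_{c-v}$ onto $L_K(E)(c-v)$ rests on the identity $v(c-v)=c-v$, so that $s(c-v)=(sv)(c-v)$ for any $s\in L_K(E)$ with $sv\in L_K(E)v$; this is immediate but deserves a word, since the domain of $\rho_{c-v}$ is $L_K(E)v$ rather than all of $L_K(E)$.
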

\begin{proof}  Let $r\in L_K(E)v$.   If $r(c-v) = 0$ then $rc = rv = r$, which recursively gives $rc^j = r$ for any $j \geq 1$.  Now write $r=\sum_{i=1}^{n} k_i{\alpha_i}{\beta_i}^*$, where the $\alpha_i$ and  $\beta_i$ are in ${\rm Path}(E)$.   We note that, for any $m\in \N$,  if  $\beta \in {\rm Path}(E)$   has length at most $ m$,  then $\beta^*$  has the property that  $\beta^* c^m$ is either $0$ or an element of ${\rm Path}(E)$ in  $L_K(E)$.   
Now let $N$ be the maximum length of the paths in the set $\{ \beta_1, \beta_2, \dots, \beta_n\}$.  Then the above discussion shows that   $r c^N$ is an element of $L_K(E)v$ of the form $\sum_{i=1}^{n} k_i \gamma_i$, where  $\gamma_i \in {\rm Path}(E)$ for $1\leq i \leq n$; that is, $r c^N \in KE$.     But $r c^N = r$, so that $r \in KE$.    However,  the equation  $rc = r$  (i.e., $r(c-v) = 0$)    has only the zero  solution in $KE$   by a degree argument.      So $r = 0$.

The second statement is established in an almost identical manner.
\end{proof}


We now have all the tools to describe a projective resolution  for the modules $V_{[c^\infty]}$ where $c$ is a simple closed path, thus completing the study of the second type of Chen simple module.  

\begin{theorem}[Type(2)] \label{projresofVcinftyfromL(E)v}      Let $E$ be any graph.
Let $c$ be a simple closed path in $E$, with $v=s(c)$.  Then the Chen simple module $V_{[c^\infty]}$ is finitely presented.   Indeed, a projective resolution of $V_{[c^\infty]}$ is given by
$$ \xymatrix{ 0 \ar[r] &  L_K(E)v \ar[r]^{\rho_{c-v}} & L_K(E)v \ar[r]^{  \ \ \rho_{c^\infty}} &  V_{[c^\infty]} \ar[r] & 0}.$$
If $E$ is a finite graph, an alternate projective resolution of $V_{[c^\infty]}$ is given by
$$ \xymatrix{ 0 \ar[r] &  L_K(E) \ar[r]^{ \hat\rho_{c-1}} & L_K(E) \ar[r]^{  \ \hat\rho_{c^\infty}} &  V_{[c^\infty]} \ar[r] & 0}.$$
\end{theorem}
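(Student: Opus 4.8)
The plan is simply to assemble Proposition~\ref{Vsubcinftypropforgeneralgraphs} and Lemma~\ref{R(c-1)isoR}. First I would record that $\rho_{c^\infty}$ is surjective: writing $c = e_1 e_2 \cdots e_t$ we have $s(e_1) = v$, so $c^\infty$ has first edge $e_1$ and hence $v \cdot c^\infty = c^\infty$ by Definition~\ref{Chendef}; thus $\rho_{c^\infty}(v) = c^\infty \neq 0$, so the image of $\rho_{c^\infty}$ is a nonzero submodule of the simple module $V_{[c^\infty]}$ and therefore equals $V_{[c^\infty]}$. The same computation gives $\hat{\rho}_{c^\infty}(v) = c^\infty$, so $\hat{\rho}_{c^\infty}$ is surjective as well.

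Next I would identify the middle map with the kernel. Since $c = vc$ we have $c - v = v(c-v)$, so $L_K(E)(c-v) \subseteq L_K(E)v$, and thus $\rho_{c-v}$ may be regarded as an endomorphism of $L_K(E)v$; by Lemma~\ref{R(c-1)isoR} it is injective and its image is exactly $L_K(E)(c-v)$, which by Proposition~\ref{Vsubcinftypropforgeneralgraphs} equals $\Ker(\rho_{c^\infty})$. Hence the sequence $0 \to L_K(E)v \xrightarrow{\rho_{c-v}} L_K(E)v \xrightarrow{\rho_{c^\infty}} V_{[c^\infty]} \to 0$ is exact at each of its three terms. Because $v$ is an idempotent, $L_K(E)v$ is a cyclic --- hence finitely generated --- projective left $L_K(E)$-module, so this is a projective resolution of $V_{[c^\infty]}$ by finitely generated projectives; in particular $V_{[c^\infty]}$ is finitely presented (and $\pdim V_{[c^\infty]} \leq 1$).

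For the alternate resolution, assume $E$ is finite, so that $L_K(E)$ is unital with $1 = \sum_{u \in E^0} u$. Here I would first verify that $L_K(E)(c-1) = \Ker(\hat{\rho}_{c^\infty})$: for $u \in E^0 \setminus \{v\}$ we have $u(c-1) = uc - u = -u$ (since $uc = 0$ as $u \neq s(c) = v$), while $v(c-1) = c - v$; combining this with the Peirce decomposition $L_K(E) = L_K(E)v \oplus \bigl( \sum_{u \neq v} L_K(E)u \bigr)$ and the inclusion $L_K(E)(c-v) \subseteq L_K(E)v$ gives $L_K(E)(c-1) = \bigl( \sum_{u \in E^0 \setminus \{v\}} L_K(E)u \bigr) \oplus L_K(E)(c-v)$, which is $\Ker(\hat{\rho}_{c^\infty})$ by Proposition~\ref{Vsubcinftypropforgeneralgraphs}. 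By the last statement of Lemma~\ref{R(c-1)isoR} the map $\hat{\rho}_{c-1}$ is injective with image $L_K(E)(c-1)$, and $L_K(E)$ is free of rank one, so the second displayed short exact sequence is again a projective resolution.

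There is no genuine obstacle here: all of the substance is contained in Proposition~\ref{Vsubcinftypropforgeneralgraphs} and Lemma~\ref{R(c-1)isoR}. The only point demanding a little care is the bookkeeping that matches images with kernels, namely the inclusion $L_K(E)(c-v) \subseteq L_K(E)v$ and the identity $L_K(E)(c-1) = \Ker(\hat{\rho}_{c^\infty})$ in the finite case.
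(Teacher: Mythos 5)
Your proof is correct and takes essentially the same route as the paper: surjectivity of $\rho_{c^\infty}$ and $\hat\rho_{c^\infty}$ from simplicity of $V_{[c^\infty]}$, the kernel identification via Proposition~\ref{Vsubcinftypropforgeneralgraphs}, the isomorphisms from Lemma~\ref{R(c-1)isoR}, and in the finite case the same identities $u=-u(c-1)$ for $u\neq v$ and $c-v=v(c-1)$ to get $\Ker(\hat\rho_{c^\infty})=L_K(E)(c-1)$. (One cosmetic point: the inclusion $L_K(E)(c-v)\subseteq L_K(E)v$ comes from $(c-v)v=c-v$, i.e.\ from $cv=c$ because $r(c)=v$, rather than from $c=vc$; since $c$ is closed at $v$ both identities hold, so nothing breaks.)
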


\begin{proof}   $V_{[c^\infty]}$ is a simple left $L_K(E)$-module by Theorem \ref{Chentheoremforsimples},   and $c^\infty = vc^\infty $ is a nonzero element in $V_{[c^\infty]}$.  So  the map $\rho_{c^\infty}:L_K(E)v \rightarrow V_{[c^\infty]}
$  is surjective. By  Proposition \ref{Vsubcinftypropforgeneralgraphs} we have  ${\rm Ker}(\rho_{c^\infty}) = L_K(E)(c-v)$. We get the first short exact sequence since by Lemma~\ref{R(c-1)isoR} the map $$\rho_{c-v}: L_K(E)v \rightarrow L_K(E)(c-v)$$ is an isomorphism of left $L_K(E)$-modules. Moreover since $v$ is idempotent, $L_K(E)v$ is a projective left $L_K(E)$-module.

Assume now that $E$ is a finite graph. Let us see that ${\rm Ker}(\hat{\rho}_{c^\infty})=L_K(E)(c-1)$. Since ${\rm Ker}(\hat{\rho}_{c^\infty}) $ clearly contains $u$ for any $u\neq v \in E^0$, we have $c-1 = c - \sum_{u\in E^0}u = (c-v) - \sum_{u\neq v} u\in {\rm Ker}(\hat{\rho}_{c^\infty}) $.    But for any $u\neq v=s(c)$  we have $u c = 0$,  so that $u = - u(c-1) \in L_K(E)(c-1)$. Since   $c-v = v(c-1) \in L_K(E)(c-1)$, using Proposition \ref{Vsubcinftypropforgeneralgraphs} we have shown  that each of the generators of ${\rm Ker}(\hat{\rho}_{c^\infty}) $ is in $L_K(E)(c-1)$.    But by Lemma ~\ref{R(c-1)isoR}, $\hat{\rho}_{c-1}: L_K(E) \rightarrow L_K(E)(c-1)$ is an isomorphism of left $L_K(E)$-modules, thus establishing the result. 
\end{proof}

%

%

\begin{corollary}
Let $E$ be any graph. 
Let $c$ be a simple closed path in $E$, with $v=s(c)$.  Then the Chen simple module $V_{[c^\infty]}$ has projective dimension 1.
\end{corollary}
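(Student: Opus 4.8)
The plan is to read off the projective dimension directly from the projective resolution established in Theorem~\ref{projresofVcinftyfromL(E)v}. That theorem exhibits the short exact sequence
$$\xymatrix{ 0 \ar[r] & L_K(E)v \ar[r]^{\rho_{c-v}} & L_K(E)v \ar[r]^{\ \ \rho_{c^\infty}} & V_{[c^\infty]} \ar[r] & 0},$$
in which both middle and left terms are $L_K(E)v$, a projective left $L_K(E)$-module (since $v$ is idempotent). This immediately gives $\pdim V_{[c^\infty]} \leq 1$. It therefore remains only to rule out $\pdim V_{[c^\infty]} = 0$, i.e. to show that $V_{[c^\infty]}$ is not projective.

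For that, I would argue that $\rho_{c-v}\colon L_K(E)v \to L_K(E)v$ is not split (equivalently, the sequence does not split, equivalently $L_K(E)(c-v)$ is not a direct summand of $L_K(E)v$). Suppose it were split; then there would be a nonzero idempotent-like decomposition exhibiting $V_{[c^\infty]}$ as a direct summand of $L_K(E)v$, so $V_{[c^\infty]}$ would be projective and hence (being simple) would be generated by a primitive idempotent $e$ with $V_{[c^\infty]} \cong L_K(E)e$. Applying $\Hom_{L_K(E)}(-,V_{[c^\infty]})$ to the above sequence and using Schur's lemma (so $\End(V_{[c^\infty]})$ is a division ring), a splitting forces $\rho_{c-v}$ to have a left inverse; equivalently there is $r \in L_K(E)v$ with $(c-v)r = v$ inside $vL_K(E)v$ (or the analogous statement on the other side), which says $c-v$ is a unit in $vL_K(E)v$. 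But $c-v$ is not invertible in $vL_K(E)v$: a direct degree/grading argument, entirely parallel to the one used in the proof of Lemma~\ref{R(c-1)isoR} to show $rc = r$ has only the zero solution in $KE$, shows $c - v$ is a non-unit (e.g. pass to the standard $\Z$-grading of $L_K(E)$, where $c$ has positive degree $t$ and $v$ has degree $0$, so $c-v$ is not homogeneous of degree $0$ and cannot be a unit). Hence the sequence is non-split and $\pdim V_{[c^\infty]} \geq 1$.

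Combining the two inequalities gives $\pdim V_{[c^\infty]} = 1$.

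I expect the only genuine obstacle to be the non-splitting step, i.e. verifying cleanly that $c-v$ is not a unit in $vL_K(E)v$ (equivalently that $L_K(E)(c-v)$ is a proper, non-summand submodule of $L_K(E)v$). The cleanest route is the grading argument sketched above; alternatively one can observe that $L_K(E)v/L_K(E)(c-v) \cong V_{[c^\infty]}$ is nonzero by Theorem~\ref{projresofVcinftyfromL(E)v}, so $\rho_{c-v}$ is not surjective and in particular not an isomorphism, and then use that a split monomorphism of a projective module onto a non-summand is impossible — but one still needs the grading (or the explicit description of $\Ker(\rho_{c^\infty})$) to conclude the complement would have to be projective-and-simple-and-a-summand, contradicting that $V_{[c^\infty]}$ is not projective. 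Everything else is immediate from the resolution already in hand.
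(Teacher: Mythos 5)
Your overall strategy coincides with the paper's: use the short exact sequence from Theorem~\ref{projresofVcinftyfromL(E)v} to get projective dimension at most $1$, then show the sequence does not split. You also correctly locate the crux: splitting is equivalent to the existence of $s\in vL_K(E)v$ with $(c-v)s=v$ (note it is a \emph{right} inverse that must be excluded; the absence of a left inverse is already forced by the nonzero cokernel $V_{[c^\infty]}$, so that observation alone proves nothing). The gap is in your justification of this step. The principle you invoke --- that an element of the $\Z$-graded algebra $L_K(E)$ (or of the corner $vL_K(E)v$) which is not homogeneous of degree $0$ cannot be a unit --- is false. For example, if $c$ is a loop at $v$ and $e$ is another edge with $s(e)=v$, $r(e)\neq v$, then $x=cee^*$ is homogeneous of degree $1$ with $x^2=ce(e^*c)ee^*=0$, so $v+x$ is a non-homogeneous unit of $vL_K(E)v$ with inverse $v-x$. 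So non-homogeneity of $c-v$ by itself cannot rule out invertibility, and the appeal to Lemma~\ref{R(c-1)isoR} is not ``entirely parallel'' either: that lemma treats the equation $r(c-v)=0$ with the unknown on the \emph{left} of $c-v$, where multiplying on the right by $c^N$ clears the ghost terms of $r$; for $(c-v)s=v$ the unknown sits on the right and that trick does not apply.

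The step can be repaired, but it needs a genuine extra input. One correct route: if $(c-v)s=v$ with $0\neq s\in vL_K(E)v$, let $s_M$ be the nonzero homogeneous component of $s$ of largest degree $M$; since $c^*c=v$ and $vs_M=s_M$, left multiplication by $c$ is injective on $vL_K(E)$, so $cs_M\neq 0$ is a nonzero component of $cs-s$ in degree $M+t>\max(M,0)$, whereas $v$ has no component in that degree --- a contradiction. The paper argues slightly differently but in the same spirit: a splitting would make $V_{[c^\infty]}$ a direct summand of $L_K(E)v$, and the image $\alpha$ of $c^\infty$ under the section would be a nonzero element of $L_K(E)v$ with $c\alpha=\alpha$, hence $c^n\alpha=\alpha$ for all $n$, which is impossible by comparing degrees. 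Either way, the degree argument must exploit that multiplication by $c$ strictly raises (extreme) degrees and does not annihilate the relevant element, not merely that $c-v$ fails to be homogeneous.
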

\begin{proof}
From Theorem~\ref{projresofVcinftyfromL(E)v} we get the exact sequence
$$ \xymatrix{ 0 \ar[r] &  L_K(E)v \ar[r]^{\rho_{c-v}} & L_K(E)v \ar[r]^{  \ \ \rho_{c^\infty}} &  V_{[c^\infty]} \ar[r] & 0}.$$
Since $v$ is an idempotent in $L_K(E)$, the left module $L_K(E)v$ is projective and hence $V_{[c^\infty]}$ has projective dimension $\leq 1$. The left module $V_{[c^\infty]}$ is not projective, otherwise the above sequence splits and $L_K(E)v$ would contain a direct summand isomorphic to $V_{[c^\infty]}$; in particular $L_K(E)v$ would contain a nonzero  element $\alpha$ (the element corresponding to $c^\infty$) such that $c\alpha=\alpha$ and hence $c^n \alpha=\alpha$ for each $n\in \mathbb N$. This is impossible by a degree argument.
\end{proof}

Before we present a projective resolution of the third type of Chen simple module, we study, in the situation where $E$ is row-finite, right multiplication by any of the monomial generators of $V_{[c^\infty]}$ for $c$ a simple closed path or a sink.   We first introduce some notation which will be useful throughout the remainder of the section.

\begin{definition} \label{XsubiDefinitions}
{\rm  Let $E$ be any graph.
Let $\beta = e_1e_2 \cdots e_n$ be a path in $E$.   For each $1\leq i \leq n$  let $\beta_i$ denote $e_1e_2 \cdots e_i$. 
 For each $0\leq i \leq n-1$ let 
$$X_{i}(\beta) = \{f \in E^1 \ | \ s(f) = s(e_{i+1}),  \ \mbox{and} \ f \neq e_{i+1}\}.$$   
The elements of $X_i(\beta)$ are called the {\it exits} of $\beta$ at $s(e_{i+1})$.  Note that, for a given $i$, it is possible that $X_i (\beta) = \emptyset$.      
For each $i \geq 0$ let $J_i(\beta)$ be the left ideal of $L_K(E)$ defined by setting
  $$ J_i(\beta) = \sum_{ f \in X_i(\beta)} L_K(E) f^* \beta_i^*.$$
  (So possibly $J_i(\beta) = \{0\}$, precisely when $X_i(\beta) = \emptyset $.)
When the path $\beta$ is clear from context, we may denote $X_i(\be)$ (resp., $J_i(\be)$) by $X_i$ (resp., $J_i$).

 Now let $p = e_1e_2e_3\cdots \in E^\infty$ be an  infinite path in $E$.   Let $p_0$ denote $s(e_1)$, and for each $i\geq 0$ let $p_{i+1}$ denote $\tau_{\leq {i+1}}(p) = e_1e_2 \cdots e_{i+1}$.   
 For each $i\geq 0$ we define 
$$X_{i}(p) :=  X_i(p_{i+1}), \ \mbox{and} \ J_i(p) := J_i(p_{i+1}).$$
}
    \end{definition}
    
   \begin{definition}\label{F_ibetadef}
 {\rm    Let $E$ be any graph.  Let $\beta = e_1e_2 \cdots e_n$ be a path in $E$ for which no vertex of $\beta$ is an infinite emitter.  For $0\leq i \leq n-1$ let 
    $$F_i(\beta) = \sum_{f \in X_i(\beta)}ff^* \in L_K(E).$$
      Note that this sum is finite by the hypothesis on $\beta$.    (We interpret $F_i(\beta)$ as $0$ in case $X_i(\beta) = \emptyset.$)  In particular, by the (CK2) relation we have 
      $$s(e_{i+1})- F_i(\beta) = e_{i+1}e_{i+1}^*$$
       for $0\leq i \leq n-1$, and by (CK1)  that $F_i(\beta) e_{i+1} = 0.$
       }
      \end{definition}

\begin{lemma}\label{lemma:solveforq}
Let $E$ be any graph.
Let $\alpha = e_1e_2 \cdots e_n$ be a path in $E$ for which no vertex of $\alpha$ is an infinite emitter.  Let $\alpha_i$ denote $e_1e_2 \cdots e_i$ for each $1\leq i \leq n$ (so in particular $\alpha = \alpha_n$).  
Suppose $q,x\in L_K(E)$ satisfy the equation $q\alpha = x$ in $L_K(E)$.   Then $$q = x\alpha^* + q \alpha_{n-1}F_{n-1}(\alpha)\alpha_{n-1}^* + \cdots + q\alpha_1F_1(\alpha)\alpha_1^* + q F_0(\alpha).$$
\end{lemma}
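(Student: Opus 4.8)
The plan is to prove the identity by telescoping, using the (CK2) relations one vertex at a time along the path $\alpha$, starting from the last edge $e_n$ and working backwards to $s(e_1)$. The basic mechanism is the observation recorded in Definition \ref{F_ibetadef}: for each $0\le i\le n-1$ we have $s(e_{i+1})-F_i(\alpha)=e_{i+1}e_{i+1}^*$ in $L_K(E)$, which is exactly the (CK2) relation at the (non-infinite-emitter) vertex $s(e_{i+1})$ rearranged. Multiplying this identity on the left by $\alpha_i$ and on the right by $\alpha_i^{*}$, and using $\alpha_i s(e_{i+1})=\alpha_i$ together with $\alpha_i e_{i+1}=\alpha_{i+1}$, gives
$$\alpha_i\alpha_i^{*}-\alpha_iF_i(\alpha)\alpha_i^{*}=\alpha_{i+1}\alpha_{i+1}^{*}$$
for each $0\le i\le n-1$ (here $\alpha_0:=s(e_1)$, so $\alpha_0\alpha_0^{*}=s(e_1)=s(\alpha)$).

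First I would rewrite the hypothesis $q\alpha=x$ as $x\alpha^{*}=q\alpha\alpha^{*}=q\alpha_n\alpha_n^{*}$. Then I would apply the displayed relation with $i=n-1$ to replace $\alpha_n\alpha_n^{*}$ by $\alpha_{n-1}\alpha_{n-1}^{*}-\alpha_{n-1}F_{n-1}(\alpha)\alpha_{n-1}^{*}$, obtaining
$$x\alpha^{*}=q\alpha_{n-1}\alpha_{n-1}^{*}-q\alpha_{n-1}F_{n-1}(\alpha)\alpha_{n-1}^{*}.$$
I would then iterate: each step peels off one more edge, turning $q\alpha_i\alpha_i^{*}$ into $q\alpha_{i-1}\alpha_{i-1}^{*}-q\alpha_{i-1}F_{i-1}(\alpha)\alpha_{i-1}^{*}$ and pushing the $F$-term into an accumulating sum. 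After $n$ steps the leading term has become $q\alpha_0\alpha_0^{*}=qs(\alpha)=q\cdot s(e_1)$. Finally, since $q\alpha=q$ forces (or rather, since the equation $q\alpha=x$ with $r(\alpha)$ a vertex gives) $q=qr(\alpha)$... more carefully: the point is that we actually want $q=q s(e_1)$ is false in general, so instead I would observe that $q\alpha=x$ already implies $q s(e_1)=q$ is not automatic, and the clean route is to note that the relation $q\alpha=x$ gives $q=q s(e_1)+\big(q-q s(e_1)\big)$, but $\big(q-q s(e_1)\big)\alpha=0$... Actually the simplest fix: one proves the identity after right-multiplying by $\alpha\alpha^{*}$ throughout is unnecessary; instead, I would simply observe that the telescoped right-hand side equals $q s(e_1)+\sum_{i=0}^{n-1}q\alpha_iF_i(\alpha)\alpha_i^{*}$ is wrong by exactly $q-qs(e_1)$, and that this discrepancy is handled because in the intended application $q$ lies in $L_K(E)s(\alpha)$. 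I expect the cleanest write-up instead multiplies the starting equation on the right by $\alpha^{*}$, never needs $q=qs(e_1)$, and simply reads off: $q\alpha_0\alpha_0^{*}=q\alpha\alpha^{*}=x\alpha^{*}$ should be read in reverse, i.e. the telescoping identity is
$$q\alpha\alpha^{*}=q s(e_1)-\sum_{i=0}^{n-1}q\alpha_iF_i(\alpha)\alpha_i^{*}$$
rearranged to the asserted form, and one uses $q\alpha\alpha^{*}=x\alpha^{*}$.

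The main obstacle I anticipate is purely bookkeeping: making the telescoping sign pattern and index range come out exactly as in the statement (the statement has all $+$ signs, so one must be careful that $s(e_{i+1})-F_i=e_{i+1}e_{i+1}^{*}$ is applied in the correct direction, yielding $q\alpha_i\alpha_i^{*}=q\alpha_{i+1}\alpha_{i+1}^{*}+q\alpha_iF_i(\alpha)\alpha_i^{*}$, so that unwinding from $i=0$ to $i=n-1$ produces $q\,s(e_1)=q\alpha\alpha^{*}+\sum_{i=0}^{n-1}q\alpha_iF_i(\alpha)\alpha_i^{*}$), and then combining with $q\,s(e_1)=q$ — which holds because $q\alpha=x$ is only being used through $x\alpha^{*}=q\alpha\alpha^{*}$, and separately one notes $q s(e_1)=q$ is guaranteed since we may harmlessly replace $q$ by $qs(e_1)$ throughout (every term $q\alpha_iF_i(\alpha)\alpha_i^{*}$ and $x\alpha^{*}=q\alpha\alpha^{*}$ is unchanged under $q\mapsto qs(e_1)$, as $\alpha=s(e_1)\alpha$). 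The finiteness of each $F_i(\alpha)$, hence that every expression here is a genuine element of $L_K(E)$, is exactly the hypothesis that no vertex of $\alpha$ is an infinite emitter, and I would flag this once at the start. No induction is really needed beyond the finite telescoping, though phrasing it as a downward induction on $n$ is also clean.
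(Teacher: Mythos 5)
Your computation is, in substance, the paper's own argument: the paper telescopes by multiplying $q\alpha=x$ on the right by $e_n^*,e_{n-1}^*,\dots,e_1^*$ in turn, using $e_{i+1}e_{i+1}^*=s(e_{i+1})-F_i(\alpha)$ at each stage, which is exactly your identity $q\alpha_{i+1}\alpha_{i+1}^*=q\alpha_i\alpha_i^*-q\alpha_iF_i(\alpha)\alpha_i^*$ read in the other direction; and flagging that finiteness of each $F_i(\alpha)$ is where the no-infinite-emitter hypothesis enters is correct.

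The point you circle around --- whether the left-hand side at the end is $q$ or $qs(e_1)$ --- is a genuine one, and your instinct is right: the telescoping produces $qs(e_1)=x\alpha^*+\sum_{i=0}^{n-1}q\alpha_iF_i(\alpha)\alpha_i^*$, and one cannot pass from this to the displayed conclusion with $q$ on the left unless $q=qs(e_1)$. Your proposed fix (``replace $q$ by $qs(e_1)$ throughout, since the right-hand side is unchanged'') does not achieve this: it only re-derives the identity with $qs(e_1)$ on the left, because the left-hand side is precisely the term that is not invariant under the replacement. In fact the statement fails for general $q$: take $\alpha$ a single edge $e$ whose source emits only $e$, and $q$ a vertex different from $s(e)$; then $q\alpha=0=x$ and every term on the right vanishes, yet $q\neq 0$. (Note the right-hand side always satisfies $(\mathrm{RHS})\,s(e_1)=\mathrm{RHS}$, so the stated equality forces $q=qs(e_1)$.) The paper's own proof silently writes $q$ for $qs(e_1)$ at the last step; this is harmless because the lemma is used only in Theorem~\ref{kernelresultforalphac^infty}, where $q\in L_K(E)u$ with $u=s(\alpha)$, so $q=qs(e_1)$ holds. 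For your write-up, either state the conclusion as $qs(e_1)=\dots$ or add the hypothesis $q\in L_K(E)s(\alpha)$; with that single adjustment your argument is complete and coincides with the paper's.
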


\begin{proof}
Multiply both sides of the equation $q\alpha = x$ by $e_n^*$, to get 
 $$xe_n^* = q\alpha e_n^* = q e_1 \cdots e_{n-1} e_ne_n^* =   q e_1 \cdots e_{n-1} (s(e_n) - F_{n-1}(\alpha)) .         $$
 Multiplying the final term and switching sides, this gives
 $$q e_1 \cdots e_{n-1} =  xe_n^*  + q e_1 \cdots e_{n-1} F_{n-1}(\alpha).
       $$
Multiplying now both sides of this displayed equation on the right by $e_{n-1}^*$, and proceeding in the same way, we easily get
 $$q e_1 \cdots e_{n-2} =  xe_n^* e_{n-1}^*  + q e_1 \cdots e_{n-1} F_{n-1}(\alpha)e_{n-1}^* + q e_1 \cdots e_{n-2} F_{n-2}(\alpha).
       $$
       Continuing in this way, after $n$ steps we reach 
       $$q \ = \  xe_n^* e_{n-1}^* \cdots e_1^*  \  + \  q e_1 \cdots e_{n-1} F_{n-1}(\alpha)e_{n-1}^* \cdots e_1^* 
     \  + \ \cdots  \ +  \ qe_1F_1(\alpha)e_1^* \ + \ qF_0(\alpha) $$
\noindent
as desired.
\end{proof}

Of course, if $c$ is a simple closed path or a sink, any nonzero element of the Chen simple module $V_{[c^\infty]}$ generates $V_{[c^\infty]}$; this is in particular true of  any ``monomial" element $\alpha c^\infty$, where $\alpha$ is a path in $E$ for which $r(\alpha) = s(c)$.   We describe here the  projective resolution corresponding to such elements, in case $E$ is row-finite.

\begin{theorem}[Types (1) $\&$ (2)]\label{kernelresultforalphac^infty}
Let $E$ be any  graph.
Let $c$ be a simple closed path or a sink in $E$, with $v=s(c)$.   Let $\alpha = e_1 e_2 \cdots e_n$ be any path in $E$ for which no vertex of $\alpha$ is an infinite emitter, and for which $r(\alpha) = r(e_n)= v$.  Let $u$ denote $s(\alpha) = s(e_1)$.  Then the following is a projective resolution of the  Chen simple $L_K(E)$-module $V_{[c^\infty]}$:
$$ \xymatrix{ 0 \ar[r] &  L_K(E)(\alpha c \alpha^* -u) \ar[r] & L_K(E)u \ar[r]^{  \ \ \rho_{\alpha c^\infty}} &  V_{[c^\infty]} \ar[r] & 0}.$$
\end{theorem}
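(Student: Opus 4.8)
The strategy is to reduce the statement to the already-proven Theorem~\ref{projresofVcinftyfromL(E)v} (and Proposition~\ref{projressink}) by transporting the resolution across the path $\alpha$. First I would observe that $\rho_{\alpha c^\infty}: L_K(E)u \to V_{[c^\infty]}$ is surjective: since $V_{[c^\infty]}$ is simple and $\alpha c^\infty = \alpha\cdot c^\infty$ is a nonzero element of $V_{[c^\infty]}$ (it is a genuine basis vector of the Chen module, namely the infinite path $\alpha c^\infty$ — here one uses $r(\alpha)=v=s(c)$ so the concatenation makes sense), any nonzero element generates. The heart of the matter is to compute $\Ker(\rho_{\alpha c^\infty})$ and identify it with $L_K(E)(\alpha c \alpha^* - u)$, and then to check that this left ideal is projective (indeed free of rank one on the generator $\alpha c\alpha^* - u$).

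For the kernel computation, the key algebraic identity is that right multiplication by $\alpha$ sends the "$c$-resolution at $v$" to the "$c$-resolution at $u$". Concretely: note that $\alpha^*\alpha = r(e_n) = v$ (by repeated (CK1)), so that $\alpha c\alpha^* \cdot \alpha = \alpha c$, and more generally conjugation by $\alpha$ relates the operator $\rho_{c^\infty}$ on $L_K(E)v$ to $\rho_{\alpha c^\infty}$ on $L_K(E)u$. I would argue: if $\lambda \in L_K(E)u$ and $\lambda \cdot \alpha c^\infty = 0$ in $V_{[c^\infty]}$, then $\lambda\alpha \in L_K(E)v$ satisfies $(\lambda\alpha)\cdot c^\infty = 0$, so by Proposition~\ref{Vsubcinftypropforgeneralgraphs}, $\lambda\alpha \in L_K(E)(c-v)$, say $\lambda\alpha = r(c-v)$ with $r\in L_K(E)v$. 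Now Lemma~\ref{lemma:solveforq} (applied with "$q$" $=\lambda$, "$x$" $=\lambda\alpha$) lets me recover $\lambda$ from $\lambda\alpha$: it gives $\lambda = \lambda\alpha\,\alpha^* + \sum_{i=0}^{n-1}\lambda\alpha_i F_i(\alpha)\alpha_i^*$, where the hypothesis that no vertex of $\alpha$ is an infinite emitter makes each $F_i(\alpha)$ a well-defined finite sum. The correction terms $\lambda\alpha_i F_i(\alpha)\alpha_i^*$ are visibly in $\Ker(\rho_{\alpha c^\infty})$ as well (since $F_i(\alpha)e_{i+1}=0$ kills the relevant path continuation), and one checks each lies in $L_K(E)(\alpha c\alpha^* - u)$: indeed $\alpha_i F_i(\alpha)\alpha_i^* = \alpha_i F_i(\alpha)\alpha_i^* \cdot u$ and $(\alpha c\alpha^* - u)$ times the appropriate element produces it, using that $F_i(\alpha)$ annihilates the forward direction of $\alpha$ so $\alpha c\alpha^*$ acts as $-u$ would on these ghost tails; alternatively, observe directly that $u - \alpha c\alpha^*$ acts as the identity on $L_K(E)u / \mathrm{(image\ of\ \alpha)}$-type pieces. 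Meanwhile $\lambda\alpha\,\alpha^* = r(c-v)\alpha^* = rc\alpha^* - r\alpha^* = r\alpha^*(\alpha c\alpha^*) - r\alpha^* \cdot u \in L_K(E)(\alpha c\alpha^* - u)$, using $\alpha^*\alpha = v$ to write $rc\alpha^* = r\alpha^*\alpha c\alpha^*$ and $r v = r$. The reverse inclusion $L_K(E)(\alpha c\alpha^* - u) \subseteq \Ker(\rho_{\alpha c^\infty})$ is a one-line check: $(\alpha c\alpha^* - u)\cdot\alpha c^\infty = \alpha c\alpha^*\alpha c^\infty - \alpha c^\infty = \alpha c\cdot c^\infty - \alpha c^\infty = \alpha c^\infty - \alpha c^\infty = 0$.

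Finally, to show the left ideal $L_K(E)(\alpha c\alpha^* - u)$ is projective, I would exhibit $\rho_{\alpha c\alpha^* - u}: L_K(E)u \to L_K(E)(\alpha c\alpha^* - u)$ as an isomorphism, mirroring Lemma~\ref{R(c-1)isoR}. Surjectivity is clear; for injectivity, suppose $r(\alpha c\alpha^* - u) = 0$ with $r \in L_K(E)u$; right-multiplying by $\alpha$ and using $\alpha^*\alpha = v$ gives $r\alpha c - r\alpha = r\alpha(c - v) = 0$ in $L_K(E)v$ (note $r\alpha = r\alpha v$), and since $r\mapsto r\alpha$ followed by $\rho_{c-v}$ being injective on $L_K(E)v$ (Lemma~\ref{R(c-1)isoR}) we get $r\alpha = 0$; but then Lemma~\ref{lemma:solveforq} with $x = r\alpha = 0$ forces $r = \sum_{i=0}^{n-1} r\alpha_i F_i(\alpha)\alpha_i^*$, and an induction on $i$ (multiplying on the right by $\alpha_i$ and using $F_i(\alpha)e_{i+1} = 0$, $r\alpha_i F_i(\alpha)\alpha_i^*\alpha_i = r\alpha_i F_i(\alpha)$, together with $r\alpha_i F_i(\alpha)\alpha_i^* = (r\alpha_i F_i(\alpha)\alpha_i^* - 0)$) collapses everything to $r = 0$ — essentially the same argument that shows $L_K(E)u \cong L_K(E)u$ under these "exit-correction" maps. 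Then $L_K(E)u$ being projective ($u$ idempotent) transfers projectivity. Assembling the short exact sequence $0 \to L_K(E)(\alpha c\alpha^* - u) \to L_K(E)u \xrightarrow{\rho_{\alpha c^\infty}} V_{[c^\infty]} \to 0$ with middle term projective and kernel projective gives the claimed projective resolution.

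\textbf{Main obstacle.} The delicate point is the bookkeeping in the kernel computation — specifically, verifying that the correction terms $\lambda\alpha_i F_i(\alpha)\alpha_i^*$ produced by Lemma~\ref{lemma:solveforq} genuinely lie in $L_K(E)(\alpha c\alpha^* - u)$ and not merely in $\Ker(\rho_{\alpha c^\infty})$. This is where the explicit algebra of $\widehat{E}$ — the interplay of (CK1), (CK2), the identity $\alpha^*\alpha = v$, and the vanishing $F_i(\alpha)e_{i+1} = 0$ — has to be handled carefully; it is the analogue of the passage from $L_K(E)(c-v)$ to $L_K(E)(c-1)$ in the second half of Theorem~\ref{projresofVcinftyfromL(E)v}, but now spread out along the path $\alpha$ rather than concentrated at a single vertex, which is exactly why the row-finiteness-type hypothesis "no vertex of $\alpha$ is an infinite emitter" is needed (so the $F_i(\alpha)$ exist).
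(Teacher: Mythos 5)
Your core argument is the paper's own: surjectivity of $\rho_{\alpha c^\infty}$ from simplicity, the one-line check that $\alpha c\alpha^*-u$ lies in the kernel, the reduction $q\alpha\in\Ker(\rho_{c^\infty})=L_K(E)(c-v)$ via Theorem~\ref{projresofVcinftyfromL(E)v} (trivially so in the sink case), Lemma~\ref{lemma:solveforq} to reconstitute $q$ from $q\alpha$, the identity $(c-v)\alpha^*=\alpha^*(\alpha c\alpha^*-u)$, and the vanishing $F_i(\alpha)\alpha_i^*\alpha=0$, which gives $q\alpha_iF_i(\alpha)\alpha_i^*=-\,q\alpha_iF_i(\alpha)\alpha_i^*(\alpha c\alpha^*-u)$; your loosely worded ``$\alpha c\alpha^*$ acts as $-u$ on these ghost tails'' is exactly this computation, and the paper's proof stops at this point.

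The extra step you add --- that $\rho_{\alpha c\alpha^*-u}\colon L_K(E)u\to L_K(E)(\alpha c\alpha^*-u)$ is an isomorphism, so that the kernel is free of rank one on $\alpha c\alpha^*-u$ --- fails when $c$ is a sink: there $\alpha c\alpha^*=\alpha\alpha^*$ is an idempotent and $\alpha\alpha^*(\alpha\alpha^*-u)=0$, so the map kills the nonzero element $\alpha\alpha^*$ of $L_K(E)u$; moreover Lemma~\ref{R(c-1)isoR}, which you invoke, only treats simple closed paths, and for a sink $c-v=0$ so your right-multiplication-by-$\alpha$ step yields nothing. The correct statement in the sink case is that $u-\alpha\alpha^*$ is an idempotent, so $L_K(E)(\alpha c\alpha^*-u)=L_K(E)(u-\alpha\alpha^*)$ is a direct summand of $L_K(E)u$, hence projective but not free on your generator. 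In the closed-path case your injectivity conclusion is true, but the final induction is both insufficient and unnecessary: $r\alpha=0$ alone does not force $r=0$ (take $r=f^*$ for an exit $f\in X_0(\alpha)$); instead use the equation you left unused, $r=ru=r\alpha c\alpha^*$, which together with $r\alpha=0$ gives $r=0$ immediately. With these repairs your write-up is complete and in fact supplies a projectivity verification that the paper's proof leaves implicit.
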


\begin{proof}
Since $u\alpha = \alpha$, we have that $\rho_{\alpha c^\infty}(u) = \alpha c^\infty$ is a nonzero element of the Chen simple module $V_{[c^\infty]}$, so that $\rho_{\alpha c^\infty}$ is surjective.  So we need only establish that ${\rm Ker}(\rho_{\alpha c^\infty}) = L_K(E)(\alpha c \alpha^* -u) $.   Since $\rho_{\alpha c^\infty}(\alpha c \alpha^* -u) = (\alpha c \alpha^* -u) \alpha c^\infty = \alpha c c^\infty - \alpha c^\infty = 0,$ it remains  only to show that ${\rm Ker}(\rho_{\alpha c^\infty}) \subseteq L_K(E)(\alpha c \alpha^* -u) $.   

So let $q \in {\rm Ker}(\rho_{\alpha c^\infty})$; specifically, $q\alpha c^\infty = 0$.  But then $q\alpha \in {\rm Ker}(\rho_{ c^\infty})$, which, by Theorem \ref{projresofVcinftyfromL(E)v}, is precisely $L_K(E)(c-v)$.  So 
$$q\alpha = r(c-v)$$
 for some $r\in L_K(E)$.   
     By Lemma \ref{lemma:solveforq}, we have 
     $$q = r(c-v)\alpha^* + q \alpha_{n-1}F_{n-1}(\alpha)\alpha_{n-1}^* + \cdots + q\alpha_1F_1(\alpha)\alpha_1^* + q F_0(\alpha).$$
Using  this representation of $q$, it suffices to show that each of the summands on the right hand side is an element of $L_K(E)(\alpha c \alpha^* -u)$.  Since easily we get $(c-v)\alpha^* = \alpha^* (\alpha c \alpha^* - u)$, we have that $ r(c-v)\alpha^* \in L_K(E)(\alpha c \alpha^* -u)$.   But for each $0 \leq i \leq n-1$ we have        $F_i(\alpha) \alpha_i^* \alpha = F_i(\alpha)e_{i+1}\cdots e_n = 0$ (using the observation made in Definition \ref{F_ibetadef}).  Using this, we see  that      $q \alpha_{i}F_{i}(\alpha)\alpha_{i}^* = - q \alpha_{i}F_{i}(\alpha)\alpha_{i}^*(\alpha c \alpha^* - u)$, so that $q \alpha_{i}F_{i}(\alpha)\alpha_{i}^* \in L_K(E)(\alpha c \alpha^* -u)$ for each $0\leq i \leq n-1$, thus completing the proof.
\end{proof}


We now describe a projective resolution of the third type of Chen simple module, namely, one corresponding to an irrational infinite path.  Whereas a  Chen simple corresponding to a rational path is always finitely presented, we will see that the determination of the finite-presentedness of a Chen simple corresponding to an irrational infinite path will depend on the structure of the graph itself.   


\begin{lemma}\label{lemma:x}
Let $E$ be  any  graph.
Let $p$ be an  irrational infinite path in $E$ with $s(p)=v$, and let $\rho_p: L_K(E)v \rightarrow V_{[p]}$ be the map $r \mapsto rp$.  Let  $x\in {\rm Ker}(\rho_p)$.  Then  there exists $n_x\in\mathbb N$ such that $x \tau_{\leq n_x}(p)=0$ in $L_K(E)$.  In other words, if $x p = 0$ in $V_{[p]}$, then $x p_{n_x} = 0$ in $L_K(E)$ for some finite initial segment $p_{n_x}$ of $p$. 
\end{lemma}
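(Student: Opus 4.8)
The plan is to write an arbitrary element $x\in L_K(E)v$ in standard form and show that each monomial summand, after suitable right multiplication by an initial segment of $p$, is either annihilated or becomes a real path of the form $\alpha$ with $\alpha\cdot p$ a basis element of $V_{[p]}$; then the hypothesis $xp=0$ forces cancellation already at the level of $L_K(E)$. More precisely, write $x=\sum_{\mu\in\M}k_\mu\mu$ with $\M\subseteq\mathcal S$ a finite set of distinct standard-form monomials, $k_\mu\in K^\times$, and (since $x\in L_K(E)v$) each $\mu=\alpha_\mu\beta_\mu^*$ with $r(\beta_\mu^*)=v$. For a monomial $\mu=\alpha\beta^*$, multiplying on the right by the initial segment $p_m$ of $p$ where $m$ is at least the length of $\beta$: if $\beta$ is an initial segment of $p$ then $\beta^*p_m$ equals the (real) path consisting of the remaining edges $e_{\ell(\beta)+1}\cdots e_m$, so $\mu p_m=\alpha e_{\ell(\beta)+1}\cdots e_m$ is a real path; if $\beta$ is not an initial segment of $p$, then $\beta^*p_m=0$ in $L_K(E)$ (this uses only the (CK1) relation and the fact that $p$ is a fixed infinite path). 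Hence, taking $n_x$ to be the maximum of the lengths of all the $\beta_\mu$, we get $xp_{n_x}=\sum_{\mu}k_\mu(\mu p_{n_x})$, a $K$-linear combination of real paths $\gamma_\mu:=\mu p_{n_x}$ (some possibly zero).

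The next step is to organize this real-path expression and use the action on $V_{[p]}$. Group the nonzero $\gamma_\mu$'s according to the basis element of $V_{[p]}$ they produce: $\gamma_\mu\cdot p = \gamma_\mu\, \tau_{>n_x}(p)$ is a basis element of $V_{[p]}$, namely the infinite path $\gamma_\mu\cdot\tau_{>n_x}(p)$, and two such agree iff $\gamma_\mu=\gamma_{\mu'}$ as real paths. Here I would invoke Lemma \ref{pcinftyequalsqcinftyLemma}'s analogue, or rather argue directly: since $p$ is irrational, distinct real paths $\gamma,\gamma'$ can only give the same element $\gamma\cdot\tau_{>n_x}(p)=\gamma'\cdot\tau_{>n_x}(p)$ of $V_{[p]}$ if one is obtained from the other by appending a closed path, which (by an argument as in Remark \ref{pathsremark}) would force $p$ to be rational — a contradiction. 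Therefore distinct nonzero $\gamma_\mu$ give distinct, hence $K$-linearly independent, basis vectors of $V_{[p]}$.

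From $xp=0$ in $V_{[p]}$ and $xp=(xp_{n_x})\cdot\tau_{>n_x}(p)=\sum_\mu k_\mu\,\gamma_\mu\cdot\tau_{>n_x}(p)$, linear independence forces $\sum_{\mu:\gamma_\mu=\gamma}k_\mu=0$ for each real path $\gamma$ appearing. But $xp_{n_x}=\sum_\mu k_\mu\gamma_\mu=\sum_\gamma\big(\sum_{\mu:\gamma_\mu=\gamma}k_\mu\big)\gamma$ in $L_K(E)$ (using the injectivity of $KE\hookrightarrow L_K(E)$, distinct real paths are $K$-linearly independent in $L_K(E)$), so every coefficient vanishes and $xp_{n_x}=0$ in $L_K(E)$, as desired.

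I expect the main obstacle to be the bookkeeping that upgrades ``$\beta^*p_m=0$ unless $\beta$ is an initial segment of $p$'' and ``distinct real paths $\gamma$ give distinct basis vectors $\gamma\cdot\tau_{>n_x}(p)$'' into clean statements — in particular, making sure that when $\beta$ \emph{is} an initial segment of $p$ the product $\mu p_{n_x}$ really is a single real path (not a sum), which is where we need $n_x\geq\ell(\beta_\mu)$ for all $\mu$ and no cancellation inside a single monomial. The irrationality of $p$ is used exactly once, but crucially, to rule out the appended-closed-path coincidence; everything else is a formal consequence of the Leavitt relations and the definition of the Chen module action.
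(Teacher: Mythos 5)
Your proposal is correct and follows essentially the same route as the paper's proof: write $x$ in standard monomial form, take $n_x$ to be the maximum length of the ghost parts so that each $\beta_\mu^*\,p_{n_x}$ is zero or a real path, use the irrationality of $p$ (via Remark \ref{pathsremark}) to see that distinct resulting real paths give distinct, hence linearly independent, infinite paths in $V_{[p]}$, and conclude that all grouped coefficients vanish, so $xp_{n_x}=0$ in $L_K(E)$. The only cosmetic difference is your extra grouping/injectivity remark at the end, which the paper handles implicitly.
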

\begin{proof}
Let $x=\sum_{i=1}^mk_i\alpha_i\beta^*_i\in {\rm Ker}(\rho_p)$, where $\alpha_i, \beta_i \in {\rm Path}(E)$. Denote by $N$ the maximum length of the $\beta_i$, $i=1,...,m$. We have
\[\rho_p(x)=\sum^m_{i=1} k_i \al_i\rho_p(\be^*_i)=\sum^m_{i=1} k_i \al_i\rho_{\tau_{> N}(p)}(\be^*_i\tau_{\leq N}(p)).\]
Since the length of each $\be_i$ is less than or equal to $N$, $t_i:=\be^*_i\tau_{\leq N}(p)$ is either zero or a real path. Therefore
\[0 = \rho_p(x)=\sum^m_{i=1} k_i \al_i\rho_{\tau_{> N}(p)}(t_i)=\rho_{\tau_{> N}(p)}(\sum^m_{i=1} k_i \al_it_i)=\rho_{\tau_{> N}(p)}(\sum_{\ell=1}^{m'} h_\ell \gamma_\ell)     = \sum_{\ell=1}^{m'} h_\ell \gamma_\ell \tau_{> N}(p)  ,\]
where the $\gamma_\ell$ ($1 \leq \ell \leq m'$)  are distinct elements of the form $\al_it_i$ in ${\rm Path}(E)$, and $h_\ell \in K$.
Since $p$ is irrational and   $\gamma_\ell$ ($1 \leq \ell \leq m'$) are distinct paths, we claim that the infinite paths $\gamma_\ell\tau_{> N}(p)$ ($1 \leq \ell \leq m'$) are distinct elements of $V_{[p]}$, as follows. Assume to the contrary that 
$\gamma_i\tau_{> N}(p)=\gamma_j\tau_{> N}(p)$ for some  $i\not=j$; necessarily $\gamma_i$ and $\gamma_j$ have distinct lengths $s_i$ and $s_j$. Assume $s_i-s_j=s>0$; then 
\[\gamma_i\tau_{> N}(p)=\gamma_j \kappa_i\tau_{> N}(p)=\gamma_j\tau_{> N}(p),\]
and hence $\kappa_i\tau_{> N}(p)=\tau_{> N}(p)$,
where $\kappa_i$ is a suitable element of ${\rm Path}(E)$ having length $s$. Therefore $\tau_{> N}(p)=\tau_{>s}(\tau_{> N}(p))=\tau_{>s+N}(p)$.  But this property implies by Remark \ref{pathsremark} that $p$ is rational, contrary to hypothesis.  Thus the $\gamma_\ell\tau_{> N}(p)$ ($1 \leq \ell \leq m'$)  are distinct infinite paths.  

Consequently,  the set $\{ \gamma_\ell\tau_{> N}(p) \  | \ 1 \leq \ell \leq m'$\}  is  linearly independent over $K$, so the previously displayed equation $0 = \sum_{\ell = 1}^{m'} h_\ell \gamma_\ell\tau_{> N}(p)$  yields that  $h_\ell = 0$ for each $1 \leq \ell \leq m'$. Therefore 
\[x\tau_{\leq N}(p)=\sum_{i=1}^mk_i\al_i\be_i^*\tau_{\leq N}(p)=\sum_{i=1}^mk_i\al_it_i=\sum_{\ell = 1}^{m'} h_\ell \gamma_\ell=0,\]
as desired.  \end{proof}

\begin{lemma}\label{xbetaequalszerogivesxinsumJi}    Let $E$ be any graph.
Suppose $\beta$ is a path of length $n$ in $E$ for which no vertex of $\beta$ is an infinite emitter, and for which $s(\beta)=v$.  For each $0\leq i \leq n-1$ let $J_i(\beta)$ be the left ideal of $L_K(E)$ given in Definition \ref{XsubiDefinitions}.    If $x \in L_K(E)v$ has $x\beta = 0$, then $x\in  \sum_{i=0}^{n-1} J_i (\beta)$.  
\end{lemma}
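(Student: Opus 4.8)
The plan is to prove the statement by induction on the length $n$ of the path $\beta = e_1e_2\cdots e_n$, exploiting the same ``solve for $q$'' mechanism already used in Lemma \ref{lemma:solveforq}. The base case $n=0$ is vacuous (or trivial): if $\beta = v$, then $x\beta = xv = x = 0$ since $x \in L_K(E)v$, and the empty sum is $\{0\}$, so there is nothing to prove. The inductive step is where the work lies.

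For the inductive step, suppose the result holds for all paths of length $< n$, and suppose $x \in L_K(E)v$ satisfies $x\beta = 0$ with $\beta = e_1\cdots e_n$. First I would right-multiply the equation $x e_1 \cdots e_n = 0$ by $e_n^*$, using the (CK1) and (CK2) relations at the vertex $s(e_n)$ — which is legitimate since $s(e_n)$ is not an infinite emitter — to obtain $x e_1 \cdots e_{n-1}(s(e_n) - F_{n-1}(\beta)) = 0$, i.e.
$$x e_1 \cdots e_{n-1} = x e_1 \cdots e_{n-1} F_{n-1}(\beta) = x\beta_{n-1} F_{n-1}(\beta) = \sum_{f \in X_{n-1}(\beta)} x \beta_{n-1} f f^*.$$
Now set $y := x e_1\cdots e_{n-1} - \sum_{f \in X_{n-1}(\beta)} x\beta_{n-1} f f^*$; the above says $y = 0$, but more usefully I want to observe that $x\beta_{n-1} = \sum_{f} x\beta_{n-1}ff^*$ already expresses $x\beta_{n-1}$ in a form that, after right-multiplication by $\beta_{n-1}^*$ and some bookkeeping, will land in $\sum_{i} J_i(\beta)$. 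The cleaner route: rewrite the displayed identity as $x\beta_{n-1}\bigl(s(e_n) - \sum_{f\in X_{n-1}} ff^*\bigr) = x\beta_{n-1}e_ne_n^* = 0$ — wait, that is just $x\beta_n e_n^* = 0$, which is automatic. Instead I should feed $x\beta_{n-1}$ back into the induction: the element $x' := x\beta_{n-1} - \sum_{f\in X_{n-1}(\beta)} x\beta_{n-1} f f^*$ equals $x\beta_{n-1}e_ne_n^* = x\beta_n e_n^* = 0$ trivially, so that gives nothing new either.

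The right formulation is this. From $x\beta_{n-1}e_n = x\beta_n = 0$ I get, by the argument above, $x\beta_{n-1} = \sum_{f \in X_{n-1}(\beta)} x\beta_{n-1}ff^*$. The terms with $f \in X_{n-1}(\beta)$: each such term is $x\beta_{n-1}ff^*$, and I claim $x\beta_{n-1}ff^* \in J_{n-1}(\beta) = \sum_{g\in X_{n-1}(\beta)} L_K(E) g^*\beta_{n-1}^*$. This is false in general, so this is not quite it. Let me reconsider: the correct decomposition, paralleling Lemma \ref{lemma:solveforq} with $x$ in the role of $x$ and trivial ``$q$,'' is to peel off $e_n^*, e_{n-1}^*, \ldots, e_1^*$ from the right of $0 = x\beta$. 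Multiplying $x\beta = 0$ on the right successively by $e_n^*, e_{n-1}^*, \ldots, e_1^*$ and using $F_i(\beta)e_{i+1}=0$ together with $s(e_{i+1}) - F_i(\beta) = e_{i+1}e_{i+1}^*$, exactly as in the proof of Lemma \ref{lemma:solveforq} but now with the left-hand side being $0$, yields
$$0 = x\beta\beta^* = x\bigl(s(\beta)\bigr) - \Bigl( x\beta_{n-1}F_{n-1}(\beta)\beta_{n-1}^* + \cdots + x\beta_1 F_1(\beta)\beta_1^* + xF_0(\beta)\Bigr),$$
since $x s(\beta) = xv = x$; hence
$$x = x\beta_{n-1}F_{n-1}(\beta)\beta_{n-1}^* + \cdots + xF_0(\beta) = \sum_{i=0}^{n-1} \sum_{f\in X_i(\beta)} x\beta_i f f^* \beta_i^*.$$
Each summand $x\beta_i ff^*\beta_i^*$ lies in $L_K(E) f^*\beta_i^* \subseteq J_i(\beta)$, and therefore $x \in \sum_{i=0}^{n-1} J_i(\beta)$, as desired. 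So in fact no induction is needed at all — the statement follows directly by specializing the computation behind Lemma \ref{lemma:solveforq} to the case $q = x$, $x\beta = 0$, and then noting $x\cdot s(\beta) = x$.

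The main obstacle, and the one point requiring care, is justifying the repeated use of the (CK2) relation $s(e_{i+1}) = e_{i+1}e_{i+1}^* + F_i(\beta)$: this is exactly where the hypothesis that no vertex of $\beta$ is an infinite emitter is used (so that $F_i(\beta)$ is a genuine finite sum in $L_K(E)$ and the (CK2) relation is available at each $s(e_{i+1})$). I would therefore state the telescoping identity explicitly, citing Lemma \ref{lemma:solveforq} with $q := x$ and $x := 0$ to get $x = x\beta_{n-1}F_{n-1}(\beta)\beta_{n-1}^* + \cdots + xF_0(\beta)$ (using $0\cdot\beta^* = 0$ and $x\,s(\beta)=x$), then expand $F_i(\beta) = \sum_{f\in X_i(\beta)} ff^*$ to read off membership in $J_i(\beta)$, and finally conclude $x\in\sum_{i=0}^{n-1}J_i(\beta)$.
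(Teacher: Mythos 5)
Your final argument is correct and in substance identical to the paper's proof: the paper obtains the very same decomposition $x=\sum_{i=0}^{n-1}\sum_{f\in X_i(\beta)}(x\beta_i f)\,f^*\beta_i^*$ by expanding $x=xv$ with (CK2) forward along $\beta$ and letting the last term $x\beta_n\beta_n^*$ vanish, while you reach it by citing Lemma \ref{lemma:solveforq} with $q:=x$ and right-hand side $0$ (the same telescoping, peeled from the other end), correctly noting both that $x\,s(\beta)=x$ is needed and that the no-infinite-emitter hypothesis is what licenses (CK2) at each vertex. The abandoned induction and the discarded intermediate claims in your write-up do no harm, since only the final telescoping identity is actually used.
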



\begin{proof}   Write $\beta = e_1e_2 \cdots e_n$.   For each $1\leq i \leq n$ let $\beta_i = e_1 e_2 \cdots e_i$.  So $\beta = \beta_n$, and thus by  hypothesis we are assuming that $x\beta_n = 0$.  Then using
the (CK2) relation  at the vertices $s(e_1), s(e_2), \cdots, s(e_n)$ in order (this is possible by the hypothesis on $\beta$), and interpreting empty sums as $0$, we get 
\begin{align*}
x  & =  xv    =  x( \sum_{f\in X_0(\beta)} ff^* + \beta_1\beta_1^*)  =   x( \sum_{f\in X_0(\beta)} ff^*) + x\beta_1r(\beta_1)\beta_1^*\\
& = x( \sum_{f\in X_0(\beta)} ff^*) + x\beta_1(\sum_{f\in X_1(\beta)} ff^* + e_2e_2^*)\beta_1^* \\
& =  x( \sum_{f\in X_0(\beta)} ff^*) + x\beta_1(\sum_{f\in X_1(\beta)} ff^*)\beta_1^* + x\beta_2r(\beta_2)\beta_2^* \\
& = \cdots \\
& = x( \sum_{f\in X_0(\beta)} ff^*) + x\beta_1(\sum_{f\in X_1(\beta)} ff^*)\beta_1^* +   \cdots  
 + x\beta_{n-1}(\sum_{f\in X_{n-1}(\beta)} ff^*)\beta_{n-1}^* +  x\beta_{n-1}e_ne_n^*\beta_{n-1}^* \\
& =  x( \sum_{f\in X_0(\beta)} ff^*) + x\beta_1(\sum_{f\in X_1(\beta)} ff^*)\beta_1^* +   \cdots  + x\beta_{n-1}(\sum_{f\in X_{n-1}(\beta)} ff^*)\beta_{n-1}^* +  x\beta_n\beta_n^* \\
& =  x( \sum_{f\in X_0(\beta)} ff^*) + x\beta_1(\sum_{f\in X_1(\beta)} ff^*)\beta_1^* +   \cdots  + x\beta_{n-1}(\sum_{f\in X_{n-1}(\beta)} ff^*)\beta_{n-1}^* +  0,
 \end{align*} 
with the final statement following from the hypothesis that $x\beta = x\beta_n = 0$. Thus 
$$ x =  \sum_{f\in X_0(\beta)}( x f )f^* +\sum_{f\in X_1(\beta)} ( x\beta_1f) f^*\beta_1^* +   \cdots  +\sum_{f\in X_{n-1}(\beta)}  (x\beta_{n-1}f) f^*\beta_{n-1}^* \in  \sum_{i=0}^{n-1} J_i (\beta).$$
\end{proof}

\begin{lemma} \label{Xinonempty}   Let $E$ be a finite graph, and let $p = e_1e_2\cdots \in E^\infty$ be an  irrational infinite path in $E$.   Then    $X_i(p)$ is nonempty  for infinitely many $i\in \Z^+$.    Consequently, in this case, $J_i(p)$ is nonzero for infinitely many $i \in \Z^+$.  
\end{lemma}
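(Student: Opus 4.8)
The plan is to establish the first assertion by proving its contrapositive: if $X_i(p) = \emptyset$ for all but finitely many $i$, then $p$ is rational. So suppose there is $M \in \Z^+$ with $X_i(p) = \emptyset$ for every $i \geq M$. By Definition \ref{XsubiDefinitions}, this says precisely that for each $i \geq M$ the vertex $s(e_{i+1})$ emits a unique edge, namely $s^{-1}(s(e_{i+1})) = \{e_{i+1}\}$.

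First I would exploit the finiteness of $E$. The vertices $s(e_{M+1}), s(e_{M+2}), s(e_{M+3}), \dots$ all lie in the finite set $E^0$, so there are indices $M \leq j < k$ with $s(e_{j+1}) = s(e_{k+1})$. I then claim that $e_{j+\ell} = e_{k+\ell}$ for every $\ell \geq 1$, which I would prove by induction on $\ell$: for $\ell = 1$, the vertices $s(e_{j+1})$ and $s(e_{k+1})$ coincide and each emits a unique edge, forcing $e_{j+1} = e_{k+1}$; for the inductive step, from $e_{j+\ell} = e_{k+\ell}$ we get $s(e_{j+\ell+1}) = r(e_{j+\ell}) = r(e_{k+\ell}) = s(e_{k+\ell+1})$, and the single-emission property at that common vertex again forces $e_{j+\ell+1} = e_{k+\ell+1}$. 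Hence $\tau_{>j}(p) = \tau_{>k}(p)$.

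Setting $q = \tau_{>j}(p)$, this equality reads $q = \tau_{>(k-j)}(q)$ with $k - j > 0$, so Remark \ref{pathsremark} gives $q = c^\infty$ with $c = e_{j+1} e_{j+2} \cdots e_k$ a closed path. Since $p = \tau_{\leq j}(p) \cdot q$ we obtain $p \sim c^\infty$, i.e., $p$ is rational, contradicting the hypothesis. Therefore $X_i(p) \neq \emptyset$ for infinitely many $i \in \Z^+$.

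For the final clause, whenever $X_i(p) \neq \emptyset$ choose $f \in X_i(p)$; then $s(f) = s(e_{i+1}) = r(e_i)$ (the last equality because $p$ is an infinite path), so $p_i f = e_1 e_2 \cdots e_i f$ is a legitimate path in $E$. Since $KE \hookrightarrow L_K(E)$, the element $f^* p_i^* = (p_i f)^*$ is nonzero, so $J_i(p) = \sum_{f \in X_i(p)} L_K(E) f^* p_i^* \neq \{0\}$; thus $J_i(p)$ is nonzero for infinitely many $i$. I do not anticipate a genuine obstacle here: the one point needing care is that the single-emission hypothesis must be invoked at each stage of the induction producing $\tau_{>j}(p) = \tau_{>k}(p)$, while everything else is immediate from the finiteness of $E^0$ and Remark \ref{pathsremark}.
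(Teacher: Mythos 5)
Your argument is correct and is essentially the paper's own proof: assume $X_i(p)=\emptyset$ for all large $i$, use finiteness of $E^0$ to find a repeated source vertex, propagate equality of edges via the single-emission property, and invoke Remark \ref{pathsremark} to conclude $p$ is rational, a contradiction. Your explicit verification that $X_i(p)\neq\emptyset$ forces $J_i(p)\neq\{0\}$ is a harmless addition that the paper leaves implicit.
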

\begin{proof}
Suppose to the contrary that there exists $N \in \N$ for which $X_i(p) = \emptyset$ for all $i\geq N$.    Since $E^0$ is finite, there exist $t, t' \geq N$, $t < t'$,  for which $s(e_t) = s(e_{t'})$.   But $X_t (p)= \emptyset$ then gives $e_t = e_{t'}$, and in a similar manner yields $e_{t+\ell} = e_{t'+\ell}$  for all $\ell \in \Z^+$.    If $d$ denotes the closed path $e_t e_{t+1} \cdots e_{t'-1}$, then we get $p \sim d^\infty$, the desired contradiction.
\end{proof}

We note that  Lemma \ref{Xinonempty} is not necessarily true without the finiteness hypothesis on the graph.  For instance, let  $M_{\N}$ be the graph    
$$  \xymatrix{  \bullet \ar[r]^{e_1} & \bullet \ar[r]^{e_2} &\bullet \ar[r]^{e_3} &  \cdots}$$
and let $p \in M_\N^\infty$ be the  irrational infinite path $e_1e_2 \cdots$.  Then $X_i(p) = \emptyset$ for all $i\geq 0$.

\begin{corollary}\label{KernelissumofJsubi}
Let $E$ be any graph.
Let $p \in E^\infty$ be an  irrational infinite path in $E$ for which no vertex of $p$ is an infinite emitter, and for which $s(p)=v$.  Let $\rho_p : L_K(E)v \rightarrow V_{[p]}$ be the map $r \mapsto rp$. For each $i\geq 0$ let $J_i(p)$ be the left ideal of $L_K(E)$ given in Definition  \ref{XsubiDefinitions}.   Then   $${\rm Ker}(\rho_p) = {\bigoplus}_{i=0}^\infty J_i(p).$$
\end{corollary}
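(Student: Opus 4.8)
The plan is to establish the two inclusions ${\rm Ker}(\rho_p) \subseteq \bigoplus_{i=0}^\infty J_i(p)$ and $\bigoplus_{i=0}^\infty J_i(p) \subseteq {\rm Ker}(\rho_p)$, together with the directness of the sum. First I would handle the easy inclusion: for each $i \geq 0$ and each $f \in X_i(p)$, the generator $f^* p_i^*$ of $J_i(p)$ satisfies $f^* p_i^* \cdot p = f^* \cdot (e_1 \cdots e_i)^* (e_1 \cdots e_i e_{i+1} \cdots) = f^* \cdot \tau_{>i}(p) = f^* e_{i+1}\tau_{>i+1}(p) = 0$ in $V_{[p]}$, since $f \neq e_{i+1}$ and $s(f) = s(e_{i+1})$. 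Hence each $J_i(p) \subseteq {\rm Ker}(\rho_p)$, and so is their sum.

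For the reverse inclusion, let $x \in {\rm Ker}(\rho_p)$. By Lemma~\ref{lemma:x}, there exists $n_x \in \mathbb{N}$ with $x\, p_{n_x} = 0$ in $L_K(E)$, where $p_{n_x} = \tau_{\leq n_x}(p) = \beta$ is a path of length $n_x$ starting at $v$ with no vertex an infinite emitter (by hypothesis on $p$). Now Lemma~\ref{xbetaequalszerogivesxinsumJi} applies verbatim and gives $x \in \sum_{i=0}^{n_x - 1} J_i(\beta) = \sum_{i=0}^{n_x-1} J_i(p) \subseteq \sum_{i=0}^\infty J_i(p)$, using that $J_i(\beta) = J_i(p_{i+1}) = J_i(p)$ for $i \leq n_x - 1$ since $X_i$ depends only on the first $i+1$ edges of $p$. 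This proves ${\rm Ker}(\rho_p) = \sum_{i=0}^\infty J_i(p)$ as left ideals.

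It remains to show the sum is direct. The main obstacle here is the bookkeeping of distinct standard-form monomials. I would argue as follows: the generators of $J_i(p)$ are the elements $f^* p_i^*$ for $f \in X_i(p)$, and as a $K$-vector space $J_i(p)$ is spanned by monomials of the form $\gamma \delta^* f^* p_i^*$ for $\gamma, \delta \in {\rm Path}(E)$ and $f \in X_i(p)$. The key point is that for $i \neq j$, a nonzero monomial appearing (after reduction to standard form) in $J_i(p)$ cannot equal one appearing in $J_j(p)$: writing such a reduced monomial as $\alpha \sigma^*$, the ghost part $\sigma^*$ ends in $f^* p_i^* = f^* e_i^* \cdots e_1^*$ with $f \neq e_{i+1}$, so $\sigma = e_1 e_2 \cdots e_i f$ records both the index $i$ (its length minus one) and the fact that the last edge $f$ is an exit of $p$ at position $i$; reduction via (CK1), (CK2) applied to $\gamma\delta^* f^* p_i^*$ never cancels the ``exit marker'' $f^*$ nor shortens this ghost tail below $p_i^* $ in a way that would confuse it with the tail for a different $j$. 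Thus if $\sum_{i=0}^m x_i = 0$ with $x_i \in J_i(p)$, expanding each $x_i$ in standard-form monomials and using the $K$-linear independence of distinct standard-form monomials in $L_K(E)$ forces each $x_i = 0$. Combining with the previous paragraph, ${\rm Ker}(\rho_p) = \bigoplus_{i=0}^\infty J_i(p)$, as claimed.

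I expect the directness of the sum to be the genuinely delicate step, since it requires a careful normal-form analysis of how products $\gamma\delta^* f^* p_i^*$ reduce in $L_K(E)$ and why the resulting standard-form monomials keep a recoverable record of $i$; the two containment inclusions are routine given Lemmas~\ref{lemma:x} and~\ref{xbetaequalszerogivesxinsumJi}.
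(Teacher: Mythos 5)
Your two containments coincide with the paper's own argument: the easy inclusion via $f^*p_i^*\cdot p=f^*\cdot\tau_{>i}(p)=0$, and the reverse inclusion by feeding Lemma~\ref{lemma:x} into Lemma~\ref{xbetaequalszerogivesxinsumJi} and noting $J_i(p_n)=J_i(p)$ for $i<n$. The gap is in the directness step, and it is genuine. Your final step invokes ``the $K$-linear independence of distinct standard-form monomials in $L_K(E)$'', which is false: the (CK2) relation creates linear dependencies among distinct monomials $\alpha\beta^*$, e.g.\ $v=ee^*+ff^*$ in $L_K(R_2)$. The same relation also defeats the claim that reduction ``never cancels the exit marker'': for an irrational $p\in R_2^\infty$ beginning with $e$ one has $X_0(p)=\{f\}$, $J_0(p)=L_K(R_2)f^*$, and the element $ff^*\in J_0(p)$ equals $v-ee^*$, an expression in standard-form monomials whose ghost tails carry no trace of $f^*$ or of the index $i=0$. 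So the fact that type-$i$ and type-$j$ monomials are distinct paths does not separate their spans, and your normal-form bookkeeping, as described, does not go through; it could only be rescued by passing to an actual basis of $L_K(E)$ and controlling the (CK2) rewrites, which is exactly the delicate part you deferred.

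The paper sidesteps all of this with a short multiplication trick, which you should adopt: suppose $\sum_{i=0}^{n} r_i=0$ with $r_i\in J_i(p)$, and multiply on the right by $p_np_n^*$. For $i<n$ and $f\in X_i(p)$, (CK1) gives $f^*p_i^*p_n=f^*e_{i+1}\cdots e_n=0$ (since $f\neq e_{i+1}$), so $r_ip_np_n^*=0$; while for $f\in X_n(p)$ one has $f^*p_n^*\,p_np_n^*=f^*p_n^*$, so $r_np_np_n^*=r_n$. Hence $r_n=0$, and iterating downward through $n-1,n-2,\dots$ yields $r_i=0$ for all $i$, which is the directness you need.
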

\begin{proof}   Clearly, for every $i \geq 0$, each element of $J_i(p)$ is in ${\rm Ker}(\rho_p)$.  Now  suppose $x\in L_K(E)$ has $xp = 0$ in $V_{[p]}$.  By Lemma \ref{lemma:x}, $x\beta = 0$ where $\beta = p_n = \tau_{\leq n}(p)$ for some $n\in \N$.   Then Lemma \ref{xbetaequalszerogivesxinsumJi}, together with the definition of $J_i(p)$ for $p\in E^\infty$, gives that ${\rm Ker}(\rho_p) = {\sum}_{i=0}^\infty J_i(p).$

 Now suppose $  \sum_{i=0}^n r_i = 0$ in $L_K(E)$, where $r_i \in J_i(p)$ for $0 \leq i \leq n$.  By construction, $r_i p_n = 0$ for all $i < n$. On the other hand, for any $f\in X_n(p), f^* p_n^* p_n p_n^* = f^* p_n^*$, so that $r_n p_n p_n^* = r_n$ for all $r_n \in J_n(p)$.   Thus  multiplying both sides of the proposed equation $  \sum_{i=0}^n r_i = 0$  on the right by $p_n p_n^*$ gives $ r_n = 0$.   Using this same idea iteratively, we get  $r_i = 0$ for all $0\leq i \leq n$,   so that the sum is indeed direct. 
\end{proof}

We note that Corollary \ref{KernelissumofJsubi} is not necessarily true without the finite emitter hypothesis on the vertices of $p$.   For instance, let $F$ be the graph
$$    \xymatrix{ \bullet^w &   \bullet^v  \ar@{=>}[l]_\infty  \ar[r]^{e_1} & \bullet \ar[r]^{e_2} &\bullet \ar[r]^{e_3} &  \cdots} $$
where there are infinitely many edges $\{f_i \ | \ i \in \Z^+\}$ from $v$ to $w$.   Let $p$ be the  irrational infinite path $e_1e_2 \cdots$, and let $\rho_p: L_K(E)v \rightarrow V_{[p]}$ as usual.   Then easily $ x = v - e_1e_1^* \in {\rm Ker}(\rho_p)$.  However, $x \notin {\sum}_{i=0}^\infty J_i(p)$, since otherwise this would yield that $v$ is a finite sum of $e_1e_1^*$ plus terms of the form $ r_if_if_i^*$ for $r_i \in L_K(E)$, which cannot happen as $v$ is an infinite emitter.

\begin{remark} 
{\rm  Corollary \ref{KernelissumofJsubi}  shows that if $E$ is row-finite and  $p$ is an irrational infinite path, then  ${\rm Ker}(\rho_p)$ is generated by those ghost paths of $L_K(E)$ which annihilate some (finite) initial path of $p$.    Effectively, this is the main difference between the rational and irrational cases; in the rational case, where $p = d^\infty$ and $s(p)=v$, there are additional elements in ${\rm Ker}(\rho_p)$ which are not of this form, namely, elements of the form $r(d-v)$ where $r\in L_K(E)$. 
}
\end{remark}

\begin{lemma} \label{eachJiisadirectsum}  Let $p$ be an  irrational infinite path in an arbitrary  graph $E$.    For $f\in E^1$ let $v_f$ denote the vertex $r(f)$.  Then, for each $i\geq 0$,     
$$J_i (p) \ =  \ \bigoplus_{f\in X_i(p)}L_K(E)  f^* p_i^*  \  \cong \ \bigoplus_{f\in X_i(p)}L_K(E)v_f.$$
as left $L_K(E)$-modules.    In particular, each $J_i(p)$ is a projective left $L_K(E)$-module.     
 \end{lemma}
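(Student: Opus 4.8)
The plan is to establish the displayed decomposition in two stages: first that $J_i(p)$ is the sum $\sum_{f\in X_i(p)}L_K(E)f^*p_i^*$ as a set (which is immediate from the definition in Definition \ref{XsubiDefinitions}), and then that this sum is direct and that each summand $L_K(E)f^*p_i^*$ is isomorphic to $L_K(E)v_f$ via the map $\rho_{f^*p_i^*}$. The only content here is the second stage, and the key computational fact that drives everything is the relation $(f^*p_i^*)(p_if)=f^*(p_i^*p_i)f=f^*s(e_{i+1})f=f^*f=r(f)=v_f$, valid because $f\in X_i(p)$ means $s(f)=s(e_{i+1})=r(p_i)$, together with (CK1). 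Symmetrically, $p_i^*p_i=r(p_i)=s(e_{i+1})=s(f)$ so that $v_f(f^*p_i^*)=f^*p_i^*$, confirming that $f^*p_i^*$ is a well-defined element of $L_K(E)v_f$ on the right.

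First I would check that $\rho_{f^*p_i^*}\colon L_K(E)v_f\to L_K(E)f^*p_i^*$, $r\mapsto rf^*p_i^*$, is a left $L_K(E)$-module isomorphism. It is clearly a surjective module homomorphism. For injectivity, suppose $rf^*p_i^*=0$ with $r\in L_K(E)v_f$; multiply on the right by $p_if$ and use the fact computed above, $(f^*p_i^*)(p_if)=v_f$, to get $rv_f=0$, i.e. $r=0$ since $r\in L_K(E)v_f$. Hence $L_K(E)f^*p_i^*\cong L_K(E)v_f$, and in particular each $L_K(E)f^*p_i^*$ is projective, being generated by the idempotent-translate $v_f$ (more precisely, isomorphic to the summand $L_K(E)v_f$ of the free module $L_K(E)$).

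Next I would show the sum $\sum_{f\in X_i(p)}L_K(E)f^*p_i^*$ is direct. Suppose $\sum_{f\in X_i(p)}r_f f^*p_i^*=0$ with only finitely many $r_f\neq 0$. Fix $g\in X_i(p)$ and multiply the equation on the right by $p_ig$. For $f\neq g$ in $X_i(p)$ we have $(f^*p_i^*)(p_ig)=f^*(p_i^*p_i)g=f^*s(e_{i+1})g=f^*g=\delta_{f,g}r(f)=0$ by (CK1), since $f\neq g$; while for $f=g$ the same computation gives $v_g$. Thus $r_g v_g=0$, and since $r_g\in L_K(E)v_g$ (each summand lies in $L_K(E)v_f$) this forces $r_g=0$. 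As $g$ was arbitrary, all $r_f=0$, so the sum is direct. Combining the two stages gives $J_i(p)=\bigoplus_{f\in X_i(p)}L_K(E)f^*p_i^*\cong\bigoplus_{f\in X_i(p)}L_K(E)v_f$, a direct sum of projectives, hence projective.

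I do not anticipate a genuine obstacle here: the lemma is essentially a bookkeeping consequence of the Cuntz--Krieger relation (CK1) applied to the mutually orthogonal partial isometries $\{f : f\in X_i(p)\}$ pulled back along the path $p_i$. The one point that needs a little care — and the closest thing to a subtlety — is noting that the decomposition is genuinely a direct sum of submodules of $L_K(E)$ (not merely an internal direct sum of abstract modules), which is why I isolate the identities $v_f f^*p_i^* = f^*p_i^*$ and $f^*p_i^*\cdot p_ig=\delta_{f,g}v_g$ at the outset; everything else is formal.
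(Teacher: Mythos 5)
Your proof is correct and takes essentially the same approach as the paper: directness of the sum is obtained by right-multiplying by $p_ig$ and invoking (CK1) so that $f^*p_i^*\cdot p_ig=\delta_{f,g}v_g$, and the identification $L_K(E)f^*p_i^*\cong L_K(E)v_f$ is the paper's isomorphism read in the opposite direction (the paper uses $x\mapsto xp_ig$, the inverse of your $\rho_{f^*p_i^*}$). The only imprecision is the parenthetical ``each summand lies in $L_K(E)v_f$'': what is actually needed (and what the paper states as a without-loss assumption) is that the coefficients may be chosen with $r_f=r_fv_f$, which follows from the identity $v_ff^*p_i^*=f^*p_i^*$ you isolated (a consequence of (E2), not of $p_i^*p_i=s(f)$), after which your argument is identical to the paper's.
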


\begin{proof} By definition  $J_i (p) = \sum_{f\in X_i(p)}L_K(E)  f^* p_i^*$.   We claim the sum is direct.  So suppose $0 = \sum_{f\in X_i(p)}r_f f^*p_i^* $, with $r_f\in L_K(E)$ for each $f\in X_i(p)$.   Without loss we may assume that each expression $r_f f^*$ is nonzero, so that we may further  assume without loss that $r_f v_f = r_f$ for each $f\in X_i(p)$.   Take  $g\in X_i(p)$; by multiplying  $0 = \sum_{f\in X_i(p)}r_f f^*p_i^* $ on the right by  $p_ig$, and using the (CK1) relation, we get  $0= r_g g^*g = r_g\cdot v_g = r_g$.  Thus the sum is direct, so that $J_i (p) = \oplus_{f\in X_i(p)}L_K(E)  f^* p_i^*$.      But for $g\in X_i(p)$ it is easy to show that $L_K(E) g^* p_i^* \cong L_K(E)v_g$, by the map $x \mapsto x p_ig$.    
\end{proof}

\begin{theorem}[Type (3)] \label{Vpinftynotfinitelypresented}  Let $E$ be any graph.
Let $p \in E^\infty$ be an  irrational infinite path in $E$ for which no vertex of $p$ is an infinite emitter.   Then the Chen simple $L_K(E)$-module $V_{[p]}$ is 
finitely presented if and only if $X_i(p)$ is nonempty only for 
finitely many $i\in \Z^+$.  

In particular, if $E$ is a finite graph, then $V_{[p]}$ is not finitely presented. 
 \end{theorem}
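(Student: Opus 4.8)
The plan is to reduce finite presentation of $V_{[p]}$ to finite generation of a single kernel, and then read off that kernel from Corollary~\ref{KernelissumofJsubi}. Since $p = vp$ is a nonzero element of the simple module $V_{[p]}$ (Theorem~\ref{Chentheoremforsimples}), the map $\rho_p\colon L_K(E)v \to V_{[p]}$ is surjective, and $L_K(E)v$ is a finitely generated projective left $L_K(E)$-module (it is cyclic, generated by the idempotent $v$, and projective as used repeatedly above, e.g.\ for Proposition~\ref{projressink}). I would first record the standard homological fact that, for any surjection $\phi\colon P \twoheadrightarrow M$ with $P$ finitely generated projective, $M$ is finitely presented if and only if ${\rm Ker}(\phi)$ is finitely generated; this is an immediate consequence of Schanuel's lemma (valid for $L_K(E)$, a ring with local units) applied to $\phi$ together with any finite presentation of $M$. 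Applying this to $\rho_p$, the task becomes: decide when ${\rm Ker}(\rho_p)$ is finitely generated.

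By Corollary~\ref{KernelissumofJsubi} (whose hypothesis that no vertex of $p$ is an infinite emitter is exactly our standing assumption), ${\rm Ker}(\rho_p) = \bigoplus_{i=0}^\infty J_i(p)$. By Lemma~\ref{eachJiisadirectsum}, $J_i(p) \cong \bigoplus_{f\in X_i(p)} L_K(E)v_f$; hence $J_i(p) \neq 0$ exactly when $X_i(p) \neq \emptyset$, and, since $X_i(p)$ is a finite set (again by the no-infinite-emitter hypothesis, as $X_i(p)$ is contained in the edge set emitted by a vertex of $p$), every nonzero $J_i(p)$ is finitely generated, being a finite direct sum of cyclic modules. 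Now I would split into the two cases. If $X_i(p) = \emptyset$ for all $i$ beyond some $N$, then ${\rm Ker}(\rho_p) = \bigoplus_{i=0}^N J_i(p)$ is a finite direct sum of finitely generated modules, hence finitely generated, so $V_{[p]}$ is finitely presented. If instead $X_i(p) \neq \emptyset$ for infinitely many $i$, then ${\rm Ker}(\rho_p)$ is an infinite direct sum of nonzero submodules, and such a module is never finitely generated: any finite subset of $\bigoplus_{i=0}^\infty J_i(p)$ lies in $\bigoplus_{i\in F} J_i(p)$ for some finite $F$, so the submodule it generates is contained in $\bigoplus_{i\in F} J_i(p) \subsetneq \bigoplus_{i=0}^\infty J_i(p)$; thus no finite set generates ${\rm Ker}(\rho_p)$, and $V_{[p]}$ is not finitely presented. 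This yields the claimed equivalence.

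For the concluding ``in particular'' assertion, suppose $E$ is a finite graph. Then Lemma~\ref{Xinonempty} gives that $X_i(p)$ is nonempty for infinitely many $i\in\Z^+$, so the equivalence just proved shows that $V_{[p]}$ is not finitely presented.

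The only step I expect to require care is the homological reduction in the first paragraph: a priori, finite presentation of $V_{[p]}$ is witnessed by \emph{some} pair of finitely generated projectives, and one must invoke Schanuel's lemma to transport this to the specific presentation afforded by $\rho_p$, thereby identifying ``$V_{[p]}$ finitely presented'' with ``${\rm Ker}(\rho_p)$ finitely generated''. Everything after that is bookkeeping with the direct-sum decomposition of ${\rm Ker}(\rho_p)$ already established.
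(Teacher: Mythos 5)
Your proposal is correct and follows essentially the same route as the paper: the short exact sequence $0 \to {\rm Ker}(\rho_p) \to L_K(E)v \to V_{[p]} \to 0$, the identification ${\rm Ker}(\rho_p) = \bigoplus_{i\geq 0} J_i(p)$ from Corollary~\ref{KernelissumofJsubi} together with Lemma~\ref{eachJiisadirectsum}, and Lemma~\ref{Xinonempty} for the finite-graph case. The only difference is that you spell out (via Schanuel's lemma and the observation that an infinite direct sum of nonzero submodules is never finitely generated) the reduction ``$V_{[p]}$ finitely presented $\Leftrightarrow$ ${\rm Ker}(\rho_p)$ finitely generated,'' which the paper uses implicitly.
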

 \begin{proof}  Let $v$ denote $s(p)$.   We consider the exact sequence
 $$ \xymatrix{ 0 \ar[r] &  {\rm Ker}(\rho_p) \ar[r] & L_K(E)v \ar[r]^{ \ \ \ \rho_p} &  V_{[p]} \ar[r] & 0}.$$
 By Corollary \ref{KernelissumofJsubi}  we have that ${\rm Ker}(\rho_p) = \oplus_{i=0}^\infty J_i(p).$  Furthermore, each $J_i(p)$ is projective by Lemma \ref{eachJiisadirectsum}, so the given exact sequence is a projective resolution of $V_{[p]}$.    Therefore $V_{[p]}$ is finitely presented if and only if $J_i(p)$
 is nonzero only for finitely many $i\in \Z^+$, i.e. $X_i(p)$ is nonempty only for finitely many $i\in \Z^+$.
 
 For the particular case, when $E$ is finite then by Lemma \ref{Xinonempty} $J_i(p)$ is nonzero for infinitely many $i$.   
  \end{proof}
 
 \begin{corollary}\label{cor:projdim1}
 If $E$ is a finite graph, and $p \in E^\infty$ is an  irrational infinite path in $E$, then the Chen simple $L_K(E)$-module $V_{[p]}$ has projective dimension 1.
 \end{corollary}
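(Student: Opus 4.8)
The plan is to use the projective resolution of $V_{[p]}$ produced in Theorem \ref{Vpinftynotfinitelypresented} directly, together with the corollary following Theorem \ref{projresofVcinftyfromL(E)v}, which handled the analogous statement for rational paths. Concretely, for a finite graph $E$ and an irrational infinite path $p$ with $v = s(p)$, Corollary \ref{KernelissumofJsubi} gives the exact sequence
$$ \xymatrix{ 0 \ar[r] &  {\bigoplus}_{i=0}^\infty J_i(p) \ar[r] & L_K(E)v \ar[r]^{ \ \ \ \rho_p} &  V_{[p]} \ar[r] & 0},$$
and Lemma \ref{eachJiisadirectsum} shows each $J_i(p)$ is a projective left $L_K(E)$-module, whence so is ${\bigoplus}_{i=0}^\infty J_i(p)$. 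Since $v$ is idempotent, $L_K(E)v$ is projective as well, so this is a projective resolution of length $1$, giving $\pdim V_{[p]} \leq 1$ immediately.

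It remains to rule out $\pdim V_{[p]} = 0$, i.e., to show $V_{[p]}$ is not projective. Here I would argue exactly as in the corollary to Theorem \ref{projresofVcinftyfromL(E)v}: if $V_{[p]}$ were projective, the short exact sequence above would split, so $L_K(E)v$ would contain a direct summand isomorphic to $V_{[p]}$. By Lemma \ref{Xinonempty}, since $E$ is finite, $X_i(p) \neq \emptyset$ for infinitely many $i$, so ${\rm Ker}(\rho_p) = {\bigoplus}_{i=0}^\infty J_i(p)$ is a nonzero module; hence the sequence does not split trivially and $\rho_p$ genuinely has nontrivial kernel — but that alone does not contradict splitting, so I need an honest argument that no copy of $V_{[p]}$ sits inside $L_K(E)v$ as a summand.

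The cleanest way to get the contradiction is a degree/realizability argument on elements of $L_K(E)v$ of the type already used in the excerpt. A splitting $V_{[p]} \hookrightarrow L_K(E)v$ would produce a nonzero $\lambda \in L_K(E)v$ (the image of the basis element $p \in [p]$) satisfying $e_1^* \lambda = $ (image of $\tau_{>1}(p)$) and, iterating, $p_n^* \lambda \neq 0$ for every $n$, together with $f^* p_i^* \lambda = 0$ for every $f \in X_i(p)$ and all $i$ — in other words $\lambda$ would lie in $L_K(E)v$ but be annihilated on the left by the entire ideal ${\bigoplus}_{i=0}^\infty J_i(p)^*$-worth of ghost relations while never being killed by any initial-segment ghost path $p_n^*$. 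Writing $\lambda = \sum_{j=1}^m k_j \alpha_j \beta_j^*$ in standard form and letting $N$ be the maximum length of the $\beta_j$, the element $p_n^* \lambda$ for $n > N$ simplifies (as in Lemma \ref{R(c-1)isoR} and Lemma \ref{lemma:x}) to a $K$-linear combination of genuine paths of bounded length; but the compatibility with the $L_K(E)$-action forced by $\lambda$ being a summand generator, applied at the infinitely many exits guaranteed by Lemma \ref{Xinonempty}, is incompatible with $\lambda$ having a fixed finite expression. I expect this last step — extracting a clean contradiction from "a single bounded-length element of $L_K(E)v$ cannot carry the infinitely-branching $L_K(E)$-module structure of $V_{[p]}$" — to be the main obstacle; everything before it is a direct invocation of Corollary \ref{KernelissumofJsubi}, Lemma \ref{eachJiisadirectsum}, and Lemma \ref{Xinonempty}, and the finiteness of $E$ is used precisely (and only) to feed Lemma \ref{Xinonempty}.
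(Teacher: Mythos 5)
The positive half of your argument is exactly the paper's: Corollary \ref{KernelissumofJsubi} gives the short exact sequence $0 \to \bigoplus_{i=0}^\infty J_i(p) \to L_K(E)v \to V_{[p]} \to 0$, and Lemma \ref{eachJiisadirectsum} together with the idempotency of $v$ makes it a projective resolution of length one, so the projective dimension is at most $1$. The genuine gap is in the non-projectivity step, and it is precisely the step you flag as ``the main obstacle.'' The relations you would extract from a hypothetical splitting $\sigma\colon V_{[p]}\hookrightarrow L_K(E)v$ --- namely that $\lambda=\sigma(p)$ satisfies $\lambda=p_n\,\sigma(\tau_{>n}(p))$ for every $n$ and is killed by the ghost monomials $f^*p_i^*$ with $f\in X_i(p)$ --- do not by themselves contradict $\lambda$ having a fixed finite expression: left divisibility by arbitrarily long real paths is not absurd in a Leavitt path algebra (in $L_K(R_1)\cong K[x,x^{-1}]$ every element is divisible by $x^n$ for all $n$), and the degree argument that works in the rational case rests on the single relation $c\alpha=\alpha$, which has no analogue here. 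So irrationality would have to enter in an essential way that you have not supplied; as written, the proof is incomplete.

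The paper closes this step by a much cheaper argument that looks at the \emph{kernel} side of the splitting rather than trying to realize $V_{[p]}$ inside $L_K(E)v$: if $V_{[p]}$ were projective the sequence would split, so $\bigoplus_{i=0}^\infty J_i(p)$ would be a direct summand, hence a homomorphic image, of the cyclic module $L_K(E)v$ and therefore finitely generated. But since $E$ is finite, Lemma \ref{Xinonempty} says $J_i(p)\neq 0$ for infinitely many $i$, and a direct sum of infinitely many nonzero modules is never finitely generated (any finite generating set would lie in a finite partial sum); contradiction. You already had all the ingredients for this --- you invoke Lemma \ref{Xinonempty} to conclude the kernel is nonzero --- so the fix is simply to upgrade ``nonzero'' to ``not finitely generated'' and compare with the cyclicity of $L_K(E)v$, instead of pursuing the realizability analysis.
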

 \begin{proof}
From Corollary~\ref{KernelissumofJsubi} we get the exact sequence
$$ \xymatrix{ 0 \ar[r] &  {\bigoplus}_{i=0}^\infty J_i(p) \ar[r]& L_K(E)v \ar[r]^{  \ \ \rho_{p}} &  V_{[p]} \ar[r] & 0}.$$ 
Since $v$ is an idempotent in $L_K(E)$, the left module $L_K(E)v$ is projective; by Lemma~\ref{eachJiisadirectsum} also ${\bigoplus}_{i=0}^\infty J_i(p)$ is projective and hence $V_{[p]}$ has projective dimension $\leq 1$. Since $E$ is finite, $J_i(p)$ is not zero for infinitely many $i$ and hence ${\bigoplus}_{i=0}^\infty J_i(p)$ is not finitely generated. Then the left module $V_{[p]}$ is not projective, otherwise ${\bigoplus}_{i=0}^\infty J_i(p)$ would be a not finitely generated direct summand of a cyclic module: contradiction.
 \end{proof}
 
 \begin{remark}\label{remark:projectiveChensimple}
 Let $M_{\mathbb N}$ be the graph
 $$  \xymatrix{  \bullet \ar[r]^{e_1} & \bullet \ar[r]^{e_2} &\bullet \ar[r]^{e_3} &  \cdots}$$
considered previously, and let $p \in M_\N^\infty$ be the  irrational infinite path $e_1e_2e_3 \cdots$.   Then $X_i(p) = \emptyset$ for all $i\geq 0$.  So  by Corollary~\ref{KernelissumofJsubi}, the Chen simple module $V_{[p]}$ is isomorphic to $L_K(E)v$, and hence it is projective.
\end{remark} 

\begin{example}\label{R2irrationalexample}
{\rm We reconsider the graph  $R_2$ and irrational infinite path $q = efeffefffe\cdots \in R_2^\infty$ described in Example \ref{R2Example}.   Then, as $R_2$ is finite, Theorem \ref{Vpinftynotfinitelypresented} yields that the Chen simple module $V_{[q]}$ is not finitely presented.  
} 
\end{example}







\begin{remark}\label{PereRangaresult}
{\rm We note that Theorems \ref{projresofVcinftyfromL(E)v} and \ref{Vpinftynotfinitelypresented} strengthen and sharpen \cite[Proposition 4.1]{AR}, most notably because we have been able to explicitly describe a projective resolution of each of the Chen simple modules.  
}
\end{remark}

%
%
%
%

 In \cite[Theorem~4.12]{AMMS} it is shown that for any graph $E$,  for
any vertex $v\in E^0$,  $L(E)v$ is a simple left ideal if and only if $v$ is a
\emph{line point}, i.e. in the full subgraph of $E$ generated by $\{ u \in E^0 \  |
\text{ there is a path from $v$ to $u$}\}$  there are no cycles, and there are no
vertices which emit more than one edge.     
Our results allow us to recover  \cite[Theorem~4.12]{AMMS}, as follows. 

\begin{corollary}\label{linepointtheorem}
Let $E$ be any graph.
Let $u\in E^0$.  Then $L_K(E)u$ is simple if and only if $u$ is a line point. 
\end{corollary}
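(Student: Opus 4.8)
The plan is to prove both implications by exploiting the explicit projective resolutions established for the three types of Chen simple modules. First I would recall the structure theory: $L_K(E)u$ is a left ideal generated by an idempotent, hence projective; it is simple precisely when it has no proper nonzero submodules. The key observation is that $L_K(E)u \cong V_{[p]}$ as left $L_K(E)$-modules for a suitable infinite path or sink associated to $u$ exactly in the ``line point'' situation, and conversely.

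For the ``if'' direction, suppose $u$ is a line point. Then in the hereditary subgraph $E_{\geq u}$ generated by the vertices reachable from $u$, every vertex emits at most one edge and there are no cycles. I would split into two cases. If the forward path from $u$ eventually reaches a sink $w$, then $L_K(E)u$ is spanned by the paths from $u$ to the various vertices on this line, and one checks directly (as Chen does, cf.\ the discussion of $\mathcal{N}_w$ in the excerpt) that $L_K(E)u \cong V_{[w^\infty]}$, which is simple by Theorem \ref{Chentheoremforsimples}; alternatively, since $u$ emits exactly one edge $e_1$ with $r(e_1)$ again a line point, one gets $L_K(E)u \cong L_K(E)r(e_1)$ via $x \mapsto xe_1$ (the CK2 relation at $u$ reads $u = e_1e_1^*$), and induction down the finite line reduces to $L_K(E)w$ for the sink $w$, which is $V_{[w^\infty]}$. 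If the forward line from $u$ is infinite, it determines an irrational (in fact non-eventually-periodic, since there are no cycles) infinite path $p = e_1e_2\cdots$ with $s(p)=u$, and since every vertex on $p$ emits exactly one edge we have $X_i(p) = \emptyset$ for all $i$; by Corollary \ref{KernelissumofJsubi}, $\Ker(\rho_p) = \bigoplus_i J_i(p) = 0$, so $\rho_p : L_K(E)u \to V_{[p]}$ is an isomorphism, and again $L_K(E)u$ is simple (exactly as in Remark \ref{remark:projectiveChensimple}). The one subtlety: the infinite line from $u$ need not be ``pure'' — but the line point hypothesis forces each reachable vertex to emit at most one edge, so there genuinely is a unique forward path, which is either infinite or terminates at a sink.

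For the ``only if'' direction, I would prove the contrapositive: if $u$ is not a line point, then $L_K(E)u$ is not simple. There are two ways to fail to be a line point. Case (a): some vertex $w$ reachable from $u$ emits two distinct edges $f \neq g$. Pick a path $\alpha$ from $u$ to $w$; then $\alpha f f^* \alpha^*$ and $\alpha g g^* \alpha^*$ are nonzero orthogonal idempotents in $L_K(E)u$ (using $r(\alpha)=w=s(f)=s(g)$ and CK1), so $L_K(E)\alpha f f^* \alpha^*$ is a nonzero proper submodule of $L_K(E)u$ — proper because $\alpha g g^* \alpha^* = \alpha g g^*\alpha^* \cdot u \notin L_K(E)\alpha f f^*\alpha^*$ (it is killed by right multiplication by the complementary idempotent, or directly by $f^*$). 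Hence $L_K(E)u$ is decomposable, in particular not simple. Case (b): there is a cycle $c$ based at some vertex $w$ reachable from $u$ via a path $\alpha$. If $c$ has an exit, case (a) applies at the exit vertex. If $c$ has no exit, then one can show $L_K(E)u$ surjects onto (in fact contains a copy of, via the idempotent $\alpha c_0 \cdots$) $L_K(E)w$, and by Theorem \ref{projresofVcinftyfromL(E)v} (or Theorem \ref{kernelresultforalphac^infty}) there is a surjection $\rho_{\alpha c^\infty} : L_K(E)u \to V_{[c^\infty]}$ with nonzero kernel $L_K(E)(\alpha c \alpha^* - u)$; a nonzero kernel of a surjection onto a nonzero module means $L_K(E)u$ is not simple.

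The main obstacle I anticipate is the bookkeeping in the ``if'' direction when the forward line from $u$ is infinite: I must be sure that the line point hypothesis really does yield a single infinite path $p$ with all $X_i(p)$ empty (so that $\Ker(\rho_p)=0$ by Corollary \ref{KernelissumofJsubi}), and separately handle the sink-terminating case, ideally uniformly by noting that in both cases $u = u^\infty$ or $u \sim p$ gives $L_K(E)u \cong V_{[p]}$. A secondary care point is making the submodule in Case (a) genuinely \emph{proper} rather than merely nonzero — this is a short orthogonality argument with the idempotents $ff^*$ and $gg^*$, but it needs to be stated cleanly. Everything else is a direct appeal to the projective resolutions and to Theorem \ref{Chentheoremforsimples}.
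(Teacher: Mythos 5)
Your argument is correct, and it reaches the corollary by a route that overlaps with, but is organized differently from, the paper's. The paper argues by a trichotomy on what lies downstream of $u$ (a path $\alpha$ to a sink, a path $\alpha$ to the base of a simple closed path, or an irrational infinite path starting at $u$), first disposing of infinite emitters by citing \cite[Lemma~4.3]{AMMS}, and then reads simplicity directly off the explicit kernels: in the first two cases $L_K(E)u$ is simple if and only if $\alpha c\alpha^*=u$ (Theorem~\ref{kernelresultforalphac^infty}), which for a sink unwinds to $e_ie_i^*=s(e_i)$ for all $i$, i.e.\ the line point condition, and for a closed path fails by a degree argument; in the irrational case simplicity is equivalent to all exit sets $X_i$ being empty (Corollary~\ref{KernelissumofJsubi}). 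You instead argue direction by direction: for ``line point $\Rightarrow$ simple'' you use the same Corollary~\ref{KernelissumofJsubi} when the forward line is infinite, but replace the $\alpha\alpha^*=u$ computation in the sink case by the iterated isomorphisms $L_K(E)u\cong L_K(E)r(e_1)\cong\cdots\cong L_K(E)w$; for the converse you handle any branching vertex (including infinite emitters) by the orthogonal-idempotent argument with $\alpha ff^*\alpha^*$ and $\alpha gg^*\alpha^*$, which yields a proper nonzero submodule (indeed a decomposition $L_K(E)u=L_K(E)\alpha ff^*\alpha^*\oplus L_K(E)(u-\alpha ff^*\alpha^*)$) with no appeal to \cite{AMMS}, and you invoke Theorem~\ref{kernelresultforalphac^infty} only for an exitless cycle downstream. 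The paper's version is shorter and uniform (``simple iff the explicit kernel vanishes iff the graph condition holds''); yours is more self-contained, since the external socle lemma for infinite emitters becomes unnecessary. Two small points to tighten: in the exitless-cycle case, state explicitly that you may assume no reachable vertex emits two distinct edges (your case (a) having been settled in full generality), so that no vertex of $\alpha$ or $c$ is an infinite emitter, as Theorem~\ref{kernelresultforalphac^infty} requires; and justify that the kernel $L_K(E)(\alpha c\alpha^*-u)$ is nonzero by the paper's degree argument ($\alpha c\alpha^*$ is homogeneous of positive degree, so $\alpha c\alpha^*\neq u$), rather than asserting it.
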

\begin{proof}
There are three possibilities:
\begin{enumerate}
\item there is a path $\alpha \in {\rm Path}(E)$ with $s(\alpha)=u$ and for which $r(\alpha) = w$ is a sink in $E$;
\item there is a path $\alpha \in {\rm Path}(E)$ with $s(\alpha)=u$ and for which $r(\alpha) = v$ is the source of a simple closed path $c$;
\item there is an infinite irrational path $q$ for which $s(q) = u$.
\end{enumerate}

If in $\alpha$ (cases 1 and 2) or in $q$ (case 3) there is an infinite emitter $x$, then $L_K(E)x$ is not a simple submodule of $L_K(E)v$ (see \cite[Lemma~4.3]{AMMS}). Therefore we can assume that $\alpha$ (cases 1 and 2) and $q$ (case 3) have no infinite emitter.\\

Cases 1 and 2. By Theorem~\ref{kernelresultforalphac^infty} $L_K(E)u$ is a simple module if and only if $\alpha c\alpha^*=u$, where $c$ is either a simple closed path or a sink. If $c$ is a simple closed path, by a degree argument $\alpha c\alpha^*$ is not a vertex. If $c$ is a sink and $\alpha=e_1 \cdots e_\ell$ then $\alpha c\alpha^*=\alpha\alpha^*=e_1 \cdots e_\ell e_\ell^* \cdots e_1^*$ is equal to $u$ if and only if $e_ie_i^*=s(e_i)$ for $i=1,...,\ell$, i.e. if and only if $s(e_i)$ is the source of only one edge, i.e. $u$ is a line point.\\

Case 3. By Corollary~\ref{KernelissumofJsubi}, $L_K(E)u$ is simple if and only if $\bigoplus_{i=0}^{\infty}J_i(p)=0$ and the latter is equivalent to $p$ having no exits, i.e. $u$ is a line point. 
%
%
\end{proof}

%
%
%
%

\section{Extensions of Chen simple modules}\label{ExtSection}

In this section we use the results of Section \ref{projresSection} to describe ${\rm Ext}^1_{L_K(E)}(S,T)$, where $S$ and $T$ are Chen simple modules over the Leavitt path algebra $L_K(E)$ corresponding to a finite graph $E$.   As a consequence, this will allow us to (among other things) construct classes of indecomposable non-simple $L_K(E)$-modules.

We give here a short review of ${\rm Ext}^1$; see e.g. \cite{W}  for more information.      Let  $R$ be a ring, and let $M,N$ be left $R$-modules.  Suppose
$$ \xymatrix{ 0  \ar[r] &  Q \ar[r]^{   \ \mu}  & P  \ar[r]^{   \ f} &  M \ar[r] & 0}$$
is a short exact sequence with $P$ projective.   Then there is an exact sequence of abelian groups
$$ \xymatrix{   {\rm Hom}_R(P,N)  \ar[r]^{   \ \mu_*  }  & {\rm Hom}_R(Q,N)  \ar[r]^{   \ \Delta_f}  & {\rm Ext}_R^1( M,N) \ar[r] & 0,}$$
where $\mu_*(\varphi) = \varphi \circ \mu$ for $\varphi \in {\rm Hom}_R(P,N),$ and $\Delta_f$ is the ``connecting morphism".   If $\mu$ is viewed as an inclusion of submodules, then $\mu_*(\varphi) = \varphi |_Q$, the restriction of $\varphi$ to $Q$.   Exactness yields that ${\rm Ext}_R^1( M,N) = 0$ if and only $\mu_*$ is surjective. Moreover, ${\rm Ext}_R^1( M,N)\neq 0$ if and only if there exists a non-splitting short exact sequence
$$0\to N \to L \to M\to 0,$$
i.e., $L$ is a non-trivial extension of $N$ by $M$.
For instance  if $M,N$ are simple left $R$-modules, then  ${\rm Ext}_R^1( M,N)\neq 0 $ if and only if there exist   indecomposable left $R$-modules of length 2 which are extensions of $N$ by $M$. 
Finally,  observe that if $R$ is a $K$-algebra over a field $K$, then the abelian group  ${\rm Ext}_R^1( M,N)$ has a natural structure of $K$-vector space for any left $R$-module $M$  and $N$. 

\medskip

We outline our approach.  There are three types of Chen simple modules: those of the form $V_{[w^{\infty}]}$ for a sink $w$; of the form $V_{[c^\infty]}$ for a simple closed path $c$; and of the form $V_{[p]}$ for an irrational infinite path $p$.    Let $T$ denote any Chen simple module.    In Lemma \ref{Extforsinks} we make the (trivial) observation that ${\rm Ext}_{L_K(E)}^1( V_{[w^{\infty}]},T) = 0$;   in Theorem \ref{dimExtSTprop} we describe ${\rm Ext}_{L_K(E)}^1( V_{[c^\infty]},T)$; and in Theorem \ref{Ext(ST)Sirrational} we describe   ${\rm Ext}_{L_K(E)}^1( V_{[p]},T)$. We recall that we are assuming $w=w^{\infty}\in E^{\infty}$ for any sink $w$.

\begin{lemma}\label{Extforsinks} {\rm (Type (1)) } 
Let $E$ be any graph.
Let $w$ be a sink in $E$, and let $T$ denote any left $L_K(E)$-module. Then ${\rm Ext}_{L_K(E)}^1( V_{[w^{\infty}]},T) = 0$, i.e. any extension of $V_{[w^{\infty}]}$ by $T$ splits.
\end{lemma}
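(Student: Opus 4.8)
The statement is that $\mathrm{Ext}^1_{L_K(E)}(V_{[w^\infty]}, T) = 0$ for any sink $w$ and any left module $T$. By Proposition \ref{projressink} (Type (1)), the Chen simple module $V_{[w^\infty]}$ is projective, being isomorphic to $L_K(E)w$ with $w$ an idempotent. The plan is simply to invoke this: a projective module has vanishing $\mathrm{Ext}^1$ against every module.

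\textbf{Key steps.} First I would recall from Proposition \ref{projressink} that $V_{[w^\infty]} \cong L_K(E)w$, and that since $w = w^2$ is idempotent, $L_K(E)w$ is a direct summand of the free module $L_K(E)$, hence projective. Second, I would feed the trivial projective resolution
$$ \xymatrix{ 0 \ar[r] & 0 \ar[r] & V_{[w^\infty]} \ar[r]^{\ \mathrm{id}} & V_{[w^\infty]} \ar[r] & 0 } $$
into the $\mathrm{Ext}$ machinery reviewed just above the lemma: with $P = V_{[w^\infty]}$ projective and $Q = 0$, the connecting map $\Delta_f$ has domain $\mathrm{Hom}_{L_K(E)}(0,T) = 0$ and is surjective onto $\mathrm{Ext}^1_{L_K(E)}(V_{[w^\infty]},T)$, forcing the latter to be $0$. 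Equivalently, one observes that any short exact sequence $0 \to T \to L \to V_{[w^\infty]} \to 0$ splits because $V_{[w^\infty]}$ is projective, so there are no non-trivial extensions.

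\textbf{Main obstacle.} There is essentially no obstacle here; this is the ``trivial'' case flagged in the outline paragraph preceding the lemma. The only thing to be careful about is that the claim is asserted for an arbitrary module $T$ (not just a Chen simple module), but projectivity of the first argument gives the vanishing uniformly in $T$, so nothing extra is needed.

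\medskip

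\begin{proof}
By Proposition \ref{projressink}, $V_{[w^\infty]} \cong L_K(E)w$ as left $L_K(E)$-modules, and since $w$ is an idempotent in $L_K(E)$, the module $L_K(E)w$ is a direct summand of $L_K(E)$, hence is a projective left $L_K(E)$-module. Thus $V_{[w^\infty]}$ is projective. Consequently every short exact sequence of left $L_K(E)$-modules of the form $0 \to T \to L \to V_{[w^\infty]} \to 0$ splits, so $\mathrm{Ext}^1_{L_K(E)}(V_{[w^\infty]}, T) = 0$. (Alternatively: applying the $\mathrm{Ext}$ exact sequence recalled above to the projective resolution $0 \to 0 \to V_{[w^\infty]} \xrightarrow{\,\mathrm{id}\,} V_{[w^\infty]} \to 0$, the group $\mathrm{Ext}^1_{L_K(E)}(V_{[w^\infty]},T)$ is a quotient of $\mathrm{Hom}_{L_K(E)}(0,T) = 0$.)
\end{proof}
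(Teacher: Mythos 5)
Your proof is correct and follows the same route as the paper: both simply invoke Proposition \ref{projressink} to get that $V_{[w^\infty]} \cong L_K(E)w$ is projective, whence ${\rm Ext}^1_{L_K(E)}(V_{[w^\infty]},T)=0$ for every $T$. The extra details you supply (splitting of extensions, the trivial resolution) are fine but not needed beyond the one-line argument the paper gives.
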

\begin{proof}  This follows immediately from the fact that $V_{[w]} \cong L_K(E)w$ is a projective  $L_K(E)$-module (see Proposition~\ref{projressink}).

\end{proof}

\begin{definition}
{\rm   
Let $T$ be a Chen simple module.   Denote by $U(T)$ the set
$$U(T) := \{v\in E^0 \ | \ vT \neq \{0\} \} =  \{v\in E^0 \ | \ \mbox{there exists } t\in T \ \mbox{with } vt\neq 0\}.$$
}
\end{definition}

\begin{remark}\label{U(T)remark} 
{\rm Let $T$ be a Chen simple module and let $q=e_1 e_2 \cdots \in E^{\infty}$ such that $T=V_{[q]}$. Then   $U(T)$ consists of those vertices $v$ for which there is a path $\alpha\in   {\rm Path}(E)$  having $s(\alpha)=v$ and $r(\alpha)=s(e_i)$ for some $i\geq 1$. Equivalently, a vertex $v\in U(T)$ if and only if there is an infinite path  tail-equivalent to $q$ starting from $v$.
Hence $U(T)$ is a feature of the Chen simple module $T$ that can be read directly from the graph $E$. }
\end{remark}

\begin{definitions}\label{Extdefinitions}
{\rm  Let $E$ be any  graph and let $d$ be a simple closed path in $E$. 

\noindent 

For any $p\in E^\infty$  we say that   $p$  is \emph{divisible by  $d$} if $p=dp'$ for some $p' \in E^\infty$. 

\noindent 

For  any $q \in E^\infty$, we define the set 
 $${L_{(d, q)}}:=\{ p \in E^\infty \ | \  p \sim  q, \ s(p)=s(d) , \text{ and } p \ \text{is not divisible by } d\} \subseteq V_{[q]},$$
 where $V_{[q]}$ is the  Chen simple $L_K(E)$-module generated by $q$.}
 \end{definitions}
 
An infinite path $p$ is divisible by a simple closed path $d\in E$ if and only if $d=t_{\leq \ell}(p)$, where $\ell$ is the length of $d$.
The set  $L_{(d,q)}$ consists of those infinite paths  which start at $s(d)$, and which eventually equal some tail of $q$, but do not start out by traversing the closed path $d$.     Observe that the subset  $L_{(d,q)}$  of $V_{[q]}$  does not depend on $q$ but only on the equivalence class $[q]$. 
Let $T=V_{[q]}$; if $q$ is not tail equivalent to $d^\infty$, then there exists $q'\sim q$ such that $d\not|q'$ and hence $T$ has a generator not divisible by $d$.

\begin{remark}\label{uniquenesslemma}
{\rm Let $d$ be a simple closed path in $E$ and $q\in E^\infty$.

1) Suppose $q$ is not tail equivalent to $d^{\infty}$ and consider the Chen simple module $V_{[q]}$; we can assume without loss of generality that $q$ is not divisible by $d$. The set $L_{(d, q)}$ is not empty if and only if $s(d)$ belongs to $U(\Vq)$; in such a case any $0\not=t\in V_{[q]}$ for which $s(d)t=t$ is a linear combination of infinite paths tail equivalent to $q$ whose sources coincide with $s(d)$. In particular, taking in account the divisibility by $d$ of these infinite paths, $t$ can be written in a unique way as 
 $$t=t_0+dt_1+d^2t_2+\dots+d^st_s,$$ where the $t_i$ are $K$-linear combinations of elements in $L_{(d, q)}$ and $t_s\neq 0$. We call $s\geq 0$ the \textbf{$d$-degree} of $t$ and we denote it by $\deg_d(t)$.
 
 (2) Suppose  $q=d^{\infty}$.  Then  $L_{(d, d^{\infty})}\neq \emptyset$ if and only if there exists a cycle $c\neq d$ with $s(c)=s(d)$.   
Any $0\not=t\in V_{[d^\infty]}$ for which $s(d)t=t$ can be written in a unique way as 
 $$t=kd^{\infty}+t_0+dt_1+d^2t_2+\dots+d^st_s,$$ where the $t_i\in \Vd$ are $K$-linear combinations of elements in $L_{(d, d^{\infty})}$ and $t_s\neq 0$. We call $s\geq 0$ the \textbf{$d$-degree} of $t$ and we denote it by $\deg_d(t)$.
 
In particular, any $0 \neq t \in L_{(d,q)}$ has $d$-degree equal to 0.  We emphasize that, in case $q=d^{\infty}$,   the $d$-degree of the element $d^\infty$ of $\Vd$ is zero too: $\deg_d(d^{\infty})=0$. The $d$-degree is not defined on 0.}
\end{remark}

  \begin{example}\label{L(d,q)example} 
  {\rm  We revisit the graph $R_2$ given by
$$\xymatrix{\bullet^v \ar@(ul,dl)_e \ar@(ur,dr)^f} \ .$$
  Consider the simple closed path $e$ and the rational infinite path $f^\infty$.     Then  $L_{(e,f^\infty)}=\{p\in R_2^{\infty} \ | \ p\sim f^{\infty} \ \text{and} \ p \ \text{is not divisible by} \ e\} \subseteq V_{[f^\infty]}$ contains, for instance, the infinite paths $\{ f ^i e^j f^\infty \ | \ i\geq 1,  j\geq 0\}$.   (There are additional elements of $L_{(e,f^\infty)} $, for instance, $fefef^\infty$.)       
  Moreover, consider an element of the form $ e^j f ^i e f^\infty  \in V_{[f^\infty]}$, with $i\geq 1$ and $j\geq 0$.  Then  ${\rm deg}_e(e^j f^i e f^\infty) = j$.
  
  On the other hand, $L_{(f,f^\infty)}= \{p\in R_2^{\infty} \ | \ p\sim f^{\infty} \ \text{and} \ p \ \text{is not divisible by} \ f\}$ contains the infinite paths $\{ e^i f^\infty \ | \ i\geq 1\}$.   Note that the element $f^\infty$ of $V_{f^\infty}$ is defined to have  ${\rm deg}_f(f^\infty) = 0$.  
    }
  \end{example}
  
Recall that $L_K(E)$ is a ring wit unity if and only if $E$ is finite.
 
 \begin{lemma}\label{equationswithsolutionslemma}
Let $E$ be a finite graph.   Let $d$ be a simple closed path  and $q\in E^\infty$.  Let $t\in V_{[q]}$, and consider the equation in the variable $X$
$$ (d-1)X=t.$$
  The equation admits a solution in $\Vq$ if one  of the following holds: \begin{enumerate}
\item  $s(d)t=0$;
\item  $t=d^np-p$ for some  $p \in L_{(d, q)}$ and  $n\geq 0$.  
\end{enumerate}
\end{lemma}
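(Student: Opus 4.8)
The plan is to handle the two cases separately, and in each case produce an explicit element of $V_{[q]}$ that solves the equation $(d-1)X = t$. For case (1), when $s(d)t = 0$, the key observation is that $d = s(d)d = r(d)d$, so $d$ acts as zero on any element killed by $s(d)$: indeed, writing $X = -t$ we get $(d-1)(-t) = -dt + t = t$, since $dt = d\cdot(s(d)t) = d\cdot 0 = 0$ (using that $d = d\, s(d)$ because $r(d) = s(d)$, so right-multiplying $d$ by $s(d)$ is the identity, while $dt$ requires $t$ to have a component supported at $r(d)=s(d)$, which it does not). So $X = -t$ works. I would double-check the direction of the action carefully: the element $t$ lies in $V_{[q]}$, $s(d)t$ picks out the part of $t$ consisting of infinite paths whose source is $s(d)$, and $dt$ is nonzero only on that part; hence $s(d)t = 0 \Rightarrow dt = 0$.

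For case (2), where $t = d^n p - p$ with $p \in L_{(d,q)}$ and $n \geq 0$, the natural guess is the telescoping sum
$$X = -\sum_{j=0}^{n-1} d^j p = -(p + dp + d^2p + \cdots + d^{n-1}p).$$
Then $(d-1)X = -\sum_{j=0}^{n-1}(d^{j+1}p - d^j p) = -(d^n p - p) = p - d^n p$. That has the wrong sign; so instead take $X = \sum_{j=0}^{n-1} d^j p$, giving $(d-1)X = d^n p - p = t$, as required. The case $n = 0$ gives $t = 0$ and $X = 0$. One should note that each $d^j p$ genuinely lies in $V_{[q]}$: since $p \in L_{(d,q)} \subseteq V_{[q]}$ has $s(p) = s(d) = r(d)$, the concatenation $d^j p$ is again an infinite path tail-equivalent to $q$, hence a basis element of $V_{[q]}$; this is exactly the content of the decomposition in Remark \ref{uniquenesslemma}.

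I do not expect any genuine obstacle here — the statement is essentially a bookkeeping exercise once one is careful about the module action and the fact that $d$, being a closed path based at $v = s(d) = r(d)$, satisfies $d = vd = dv$ in $L_K(E)$, so that right-multiplication of the identity $1$ by $(d-1)$ and the telescoping identity $(d-1)(1 + d + \cdots + d^{n-1}) = d^n - 1$ both make sense at the level of the action on $V_{[q]}$. The only point requiring a moment's care is that in case (1) the relevant identity is $dt = 0$ whenever $s(d)t = 0$, which uses $d = d\,s(d)$ (equivalently $r(d^*) = s(d)$), together with the fact that for $q' \in [q]$ the element $d q'$ is nonzero only when $s(q') = r(d) = s(d)$; I would spell this out via Definition \ref{Chendef} applied edge-by-edge to $d = e_1 \cdots e_t$.
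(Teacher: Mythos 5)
Your proposal is correct and follows exactly the paper's argument: in case (1) the solution $X=-t$ works because $s(d)t=0$ forces $dt=0$, and in case (2) the telescoping sum $X=\sum_{j=0}^{n-1}d^jp$ gives $(d-1)X=d^np-p=t$. The extra verifications you include (the action of $d$ via Definition \ref{Chendef} and the membership $d^jp\in V_{[q]}$) are sound but not a different method.
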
 
\begin{proof}
(1) is easy,  since if $s(d)t=0$ then  $dt=0$, and hence $X=-t$ is a solution.   (2) is nearly as easy, since  we have $(d-1)\sum_{i=0}^{n-1}d^i p=d^n p - p=t$, and hence $X= \sum_{i=0}^{n-1}d^i p$ is a solution.  
 \end{proof}

\begin{lemma}\label{equationswithNOsolutionslemma}
Let $E$ be a finite graph.   Let $d$ be a simple closed path   and let $q\in E^\infty$. Assume either $q=d^\infty$ or $q$ is a generator of  $V_{[q]}$ not divisible by $d$. Let $ 0 \neq t \in V_{[q]}$, and consider the equation 
$$ (d-1)X=t.$$
  Assume  $ t=d^nt'$  for some  $n\geq 0$ and some  $0\neq t'\in \Vq$ for which  $s(d)t' = t'$ and $\deg_d(t')=0$.   Then    the equation has no solution in $\Vq$.  
In particular:

(1)  the equation $(d-1)X = t$ has no solution in $V_{[q]}$ whenever $t\in L_{(d,q)}$, and 

(2)  the equation $(d-1)X = d^\infty$ has no solution in $V_{[d^\infty]}$.

\end{lemma}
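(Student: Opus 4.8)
The plan is to argue by contradiction using the $d$-degree function introduced in Remark~\ref{uniquenesslemma}. Suppose $X\in\Vq$ solves $(d-1)X=t$ where $t=d^nt'$ with $t'\neq 0$, $s(d)t'=t'$, and $\deg_d(t')=0$. First I would observe that, since $dt=t+(d-1)X$ lies in $s(d)\cdot\Vq$ wait---more carefully, since $(d-1)X=t\neq 0$ we must have $dX\neq X$, so $X$ is not annihilated by $d$; replacing $X$ by $s(d)X$ (which still satisfies the equation because $t=s(d)t$, as $t=d^nt'$ and $s(d)d^n=d^n$) we may assume $s(d)X=X$, so that $X$ has a well-defined $d$-degree. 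Write $X$ in its canonical form $X=t_0+dt_1+\cdots+d^mt_m$ (plus possibly a $kd^\infty$ term in the case $q=d^\infty$), with $t_m\neq 0$ and each $t_i$ a $K$-linear combination of elements of $L_{(d,q)}$, so $\deg_d(X)=m$.

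The key computation is then to expand $dX-X$ and read off its top $d$-degree term. We have
$$dX-X = -t_0 + d(t_0-t_1) + d^2(t_1-t_2) + \cdots + d^m(t_{m-1}-t_m) + d^{m+1}t_m.$$
Since $t_m\neq 0$ and $t_m$ is a combination of elements of $L_{(d,q)}$ (hence not divisible by $d$), the term $d^{m+1}t_m$ is genuinely present and $\deg_d(dX-X)=m+1$. On the other hand $dX-X = t = d^nt'$ has $d$-degree exactly $n$ by hypothesis. By the uniqueness of the $d$-degree (Remark~\ref{uniquenesslemma}), we get $m+1=n$, so $n\geq 1$; moreover comparing the canonical decompositions forces $t_m=t'$ and, descending through the coefficients, $t_{m-1}-t_m=0$, $t_{m-2}-t_{m-1}=0$, \dots, $t_0-t_1=0$, and finally $-t_0=0$. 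Hence $t_0=t_1=\cdots=t_m=0$, contradicting $t_m=t'\neq 0$. (In the case $q=d^\infty$ one must also track the coefficient $k$ of $d^\infty$: since $d\cdot d^\infty=d^\infty$, the $d^\infty$-component of $dX-X$ is $k-k=0$, which is consistent with $t=d^nt'$ having zero $d^\infty$-component because $\deg_d(t')=0$ and $t'$ is a combination of elements of $L_{(d,d^\infty)}$; so this term contributes nothing and the argument goes through unchanged.)

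Once the general statement is proved, parts (1) and (2) are immediate specializations. For (1), an element $t\in L_{(d,q)}$ has $d$-degree $0$ and $s(d)t=t$, so it is of the required form with $n=0$ and $t'=t$. For (2), the element $d^\infty\in V_{[d^\infty]}$ satisfies $s(d)d^\infty=d^\infty$ and has $\deg_d(d^\infty)=0$ by the convention in Remark~\ref{uniquenesslemma}, so again it has the required form with $n=0$. I expect the main obstacle to be purely bookkeeping: making sure the canonical form of Remark~\ref{uniquenesslemma} behaves correctly under left multiplication by $d$ (in particular that $d\cdot L_{(d,q)}$ consists of paths divisible by $d$ but not by $d^2$, so that degrees shift by exactly one and no collapsing occurs), and handling the $q=d^\infty$ case where the extra $d^\infty$-summand must be carried along; neither is deep, but both require care to state cleanly.
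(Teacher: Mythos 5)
Your overall strategy is the paper's own: reduce to $s(d)X=X$, write $X$ in the canonical form of Remark~\ref{uniquenesslemma}, telescope $dX-X$, and compare with the decomposition of $t=d^nt'$ via uniqueness. The cascade $-t_0=0$, $t_0-t_1=0,\dots$ is exactly the paper's computation. However, there is a concrete flaw in your handling of the case $q=d^\infty$: you assert that $\deg_d(t')=0$ implies $t'$ is a $K$-combination of elements of $L_{(d,d^\infty)}$, hence that $t=d^nt'$ has zero $d^\infty$-component. That is false: by Remark~\ref{uniquenesslemma}(2) a degree-zero element of $\Vd$ has the form $kd^\infty+t_0'$ with $k$ possibly nonzero, and indeed $\deg_d(d^\infty)=0$ by convention. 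The case $k\neq 0$ is precisely part (2) of the lemma ($t=d^\infty$), which you correctly reduce to the general statement but which your proof of the general statement then excludes; it is also the case needed later (e.g.\ in Proposition~\ref{dimExtSTprop}(2), where the lemma is applied to $y=k_0d^\infty+k_1p_1+\dots$ with $k_0$ possibly nonzero). In that excluded case your auxiliary claims also break down: $\deg_d(t)=n$ fails when $t'$ is a scalar multiple of $d^\infty$ (then $t=k d^\infty$ has degree $0$), and the identification "$t_m=t'$" should be "$t_m=$ the $L_{(d,d^\infty)}$-part of $t'$".

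The repair is short, and it is what the paper's comparison implicitly does: since $d\cdot d^\infty=d^\infty$, the $d^\infty$-component of $dX-X$ is always $k-k=0$; so by uniqueness of the decomposition the $d^\infty$-component of $t=d^nt'$ must vanish. If it does not (e.g.\ $t'=d^\infty$), this is already the desired contradiction, with no cascade needed. If it does vanish, then $t'$ really is a combination of elements of $L_{(d,d^\infty)}$ and your degree comparison and coefficient cascade go through verbatim. With that case distinction added, your argument coincides with the paper's proof.
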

\begin{proof}
Let $v=s(d)$. Since $vd = d$ and $t=d^nt'$, we get $vt=t$.  So if $x$ is a solution of $(d-1)X = t$,  then we would have $v(d-1)x = t$, so that   $(d-1)vx = t$; thus we may assume without loss that $vx = x$. Hence the equation yields 
 $$x = dx - t,$$
and, since $t\not=0$, necessarily then $x \neq 0$. Let   $\deg_d(x)=s$  and   write $$x=kd^{\infty}+x_0+dx_1+ \dots +d^sx_s,$$ where the $x_i$'s are linear combination of elements in $L_{(d, q)}$, and $k=0$ in case $q\neq d^{\infty}$. Then, using $d\cdot d^\infty = d^\infty$,   we get 
\begin{align}
t  & = d^nt' = (d-1)x = dx - x  \nonumber \\
  & =kd^{\infty}-kd^{\infty} -x_0+d(x_0-x_1)+\dots+d^s(x_{s-1} -x_{s}) + d^{s+1}x_s \nonumber \\
  &= -x_0+d(x_0-x_1)+\dots+d^s(x_{s-1}-x_{s}) + d^{s+1}x_s.  \nonumber 
  \end{align}
We claim that this is impossible.    Set $x_{-1}=0=x_{s+1}$.   By the uniqueness of the decomposition in Remark~\ref{uniquenesslemma}, since $\deg_d(t')=0$,  one gets $t'=x_{n-1}-x_{n}$ and $x_{i-1}-x_i=0$ for any $i\neq n$, $-1\leq n \leq s+1$.   Then we have $0=x_0=x_1=\dots=x_{n-1}$ and $t'=-x_n=\dots -x_{s+1}=0$, contradiction.

\end{proof}



%
%

Assume  $E$ is a finite graph and $d$ a simple closed path in $E$. In order to compute the groups $\Ext^1_{L_K(E)}(V_{[d^{\infty}]}, T)$ for any Chen simple module $T$,  we can consider the projective resolution of  $V_{[d^{\infty}]}$ 
$$0 \rightarrow {L_K(E)}\stackrel{\hat{\rho}_{(d-1)}}{\to}
 {L_K(E)} \stackrel{\hat{\rho}_{d^{\infty}}}{\to} \Vd \rightarrow 0$$  ensured by Theorem  \ref{projresofVcinftyfromL(E)v}.


\begin{lemma}\label{lemma:equation}   Let $E$ be a finite graph.  Let $d$ be a simple closed path in $E$ and let $T$ be a Chen simple module. Consider the exact sequence
$$ \xymatrix{   {\rm Hom}_{L_K(E)}({L_K(E)},T)  \ar[r]^{  \ \ \  \!  \! \! \!   \! \!  \! \! \!  \hat\rho_{(d-1)_*}} & {\rm Hom}_{L_K(E)}({L_K(E)}, T)  \ar[r]^{ \  \ \  \pi }  & {\rm Ext}_{L_K(E)}^1(\Vd, T) \ar[r] & 0}$$
where $\hat\rho_{(d-1)_*}(\phi)=\phi\circ \hat\rho_{d-1}$,  and $\pi$ is the connecting homomorphism. 
Then $$\pi(\hat\rho_{t})=0 \mbox{ if and only if the equation }   (d-1)X=t  \mbox{ has a solution in } T.$$  Consequently, $\Ext^1_{L_K(E)}(\Vd, T)=0$ 
if and only if $(d-1)X = t$ has a solution in $T$ for every $t\in T$. 
\end{lemma}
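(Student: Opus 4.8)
The plan is to use the identification $\operatorname{Hom}_{L_K(E)}(L_K(E),T)\cong T$, under which an element $t\in T$ corresponds to the homomorphism $\hat\rho_t\colon L_K(E)\to T$, $r\mapsto rt$. Under this identification the map $\hat\rho_{(d-1)_*}$ becomes exactly left multiplication by $d-1$ on $T$: indeed $\hat\rho_{(d-1)_*}(\hat\rho_t)=\hat\rho_t\circ\hat\rho_{d-1}$ sends $r\mapsto r(d-1)t$, which is the homomorphism $\hat\rho_{(d-1)t}$ associated with the element $(d-1)t\in T$. So the image of $\hat\rho_{(d-1)_*}$ corresponds to the subgroup $(d-1)T=\{(d-1)t\mid t\in T\}$ of $T$.

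Next I would exploit exactness of the displayed sequence: since $\pi$ is the cokernel of $\hat\rho_{(d-1)_*}$, for $t\in T$ we have $\pi(\hat\rho_t)=0$ if and only if $\hat\rho_t\in\operatorname{Im}(\hat\rho_{(d-1)_*})$, i.e.\ if and only if $t\in(d-1)T$, i.e.\ if and only if there exists $x\in T$ with $(d-1)x=t$. That is precisely the assertion that the equation $(d-1)X=t$ has a solution in $T$. This gives the first, ``$\pi(\hat\rho_t)=0$ iff solvable'', statement directly.

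For the consequence, note that $\pi$ is surjective (again by exactness), so $\operatorname{Ext}^1_{L_K(E)}(V_{[d^\infty]},T)=0$ if and only if $\pi$ is the zero map, which happens if and only if $\pi(\hat\rho_t)=0$ for every $t\in T$ — every homomorphism $L_K(E)\to T$ being of the form $\hat\rho_t$ for a unique $t\in T$. By the first part this is equivalent to $(d-1)X=t$ being solvable in $T$ for every $t\in T$, as claimed.

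The only genuinely substantive point is the verification that the connecting-map formalism recalled at the start of Section \ref{ExtSection} applies verbatim to the specific projective resolution $0\to L_K(E)\xrightarrow{\hat\rho_{d-1}}L_K(E)\xrightarrow{\hat\rho_{d^\infty}}V_{[d^\infty]}\to0$ from Theorem \ref{projresofVcinftyfromL(E)v}, and the bookkeeping that the composite $\hat\rho_t\circ\hat\rho_{d-1}$ really is $\hat\rho_{(d-1)t}$ (so that one must be careful about the side on which $d-1$ acts — here $L_K(E)$ acts on itself by right multiplication in $\hat\rho$, and the relevant composition unwinds correctly because $\hat\rho_t$ is a left module map). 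Everything else is a formal consequence of exactness, so I do not expect any real obstacle beyond this routine check. I would write the proof in three short steps mirroring the three paragraphs above.
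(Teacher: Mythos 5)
Your argument is correct and is essentially the paper's own proof: you identify $\operatorname{Hom}_{L_K(E)}(L_K(E),T)$ with $T$ via $t\mapsto\hat\rho_t$, compute $\hat\rho_t\circ\hat\rho_{d-1}=\hat\rho_{(d-1)t}$, and read off both claims from exactness of the displayed sequence, exactly as in Lemma \ref{lemma:equation} of the paper. No gaps; the only implicit hypothesis you use (that every homomorphism $L_K(E)\to T$ is some $\hat\rho_t$) is justified since $E$ is finite, so $L_K(E)$ is unital, as the paper also notes in Remark \ref{rem:idempotent}.
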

\begin{proof}
By  exactness it follows that  $\pi(\hat\rho_{t})=0$ if and only if there exists $x\in T$ such that 
\[\hat\rho_{t}={\hat\rho_{(d-1)_*}}(\hat\rho_{x})=\hat\rho_{x}\circ \hat\rho_{(d-1)}=\hat\rho_{(d-1)x}\]
i.e. if and only if  the equation $(d-1)X = t$ has a solution in $T$.

The final statement follows directly from the exactness of the displayed sequence. 
\end{proof}

\begin{theorem}\label{Ext(ST)Srational} {\rm (Type (2))}  Let $E$ be a finite graph.
Let $d$ be a simple closed path in $E$  and let $T$ be a Chen simple module. 
Then the following are equivalent:
\begin{enumerate}
\item   $\Ext^1_{L_K(E)}(\Vd, T)\neq 0.$
\item   $s(d)\in U(T)$
\end{enumerate}
\end{theorem}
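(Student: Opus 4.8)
The plan is to use the description of $\mathrm{Ext}^1$ from Lemma \ref{lemma:equation}: since $E$ is finite, $\mathrm{Ext}^1_{L_K(E)}(V_{[d^\infty]},T)\neq 0$ if and only if there exists $t\in T$ for which the equation $(d-1)X=t$ has no solution in $T$. So the theorem amounts to showing: such a $t$ exists precisely when $s(d)\in U(T)$.

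First I would dispose of the direction $(1)\Rightarrow(2)$ by contraposition. Suppose $s(d)\notin U(T)$, so $s(d)T=\{0\}$. Then for any $t\in T$ we have $dt = d\,s(d)\,t = 0$ (using $d = d\,s(d)$, since $s(d)=r(d)$ and $d$ is a closed path based at $s(d)$ — more precisely $d = v d$ with $v=s(d)$, and also $dt$ involves acting by $d$ which first applies $s(d)$; one checks $dt=0$ whenever $s(d)t=0$ because the action of $d$ factors through $s(d)$). Hence $X=-t$ solves $(d-1)X = dt - t = -t = t$... wait, we need $(d-1)X = t$, i.e. $dX - X = t$; with $X = -t$ this gives $-dt + t = t$ since $dt=0$. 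So every equation is solvable, and by Lemma \ref{lemma:equation}, $\mathrm{Ext}^1 = 0$. This is exactly the content of Lemma \ref{equationswithsolutionslemma}(1) applied to every $t$, so I would just cite that.

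For $(2)\Rightarrow(1)$: assume $s(d)\in U(T)$. Write $T = V_{[q]}$ for some $q\in E^\infty$. I split into the two cases governed by Remark \ref{uniquenesslemma}. If $q$ is not tail-equivalent to $d^\infty$, then by Remark \ref{uniquenesslemma}(1) the set $L_{(d,q)}$ is nonempty (precisely because $s(d)\in U(T)$), so pick $0\neq t\in L_{(d,q)}$; by Lemma \ref{equationswithNOsolutionslemma}(1) the equation $(d-1)X=t$ has no solution in $T$, hence $\mathrm{Ext}^1\neq 0$. If $q = d^\infty$, then $T = V_{[d^\infty]}$ and $s(d)\in U(T)$ automatically; take $t = d^\infty$, which is a nonzero element of $V_{[d^\infty]}$ with $s(d)\cdot d^\infty = d^\infty$ and $\deg_d(d^\infty)=0$ by Remark \ref{uniquenesslemma}(2). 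Then Lemma \ref{equationswithNOsolutionslemma}(2) says $(d-1)X = d^\infty$ has no solution in $V_{[d^\infty]}$, so $\mathrm{Ext}^1\neq 0$. Combining the cases gives $(2)\Rightarrow(1)$.

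**Main obstacle.** The only genuinely delicate point is making sure the case division in the $(2)\Rightarrow(1)$ direction is exhaustive and that in the non-cyclic case we are entitled to assume the generator $q$ is not divisible by $d$ (needed to invoke Lemma \ref{equationswithNOsolutionslemma}); this is handled by the observation recorded just after Definitions \ref{Extdefinitions} that if $q\not\sim d^\infty$ then $[q]$ contains a representative not divisible by $d$. Everything else is a direct citation of Lemmas \ref{lemma:equation}, \ref{equationswithsolutionslemma}, and \ref{equationswithNOsolutionslemma} together with the nonemptiness criterion for $L_{(d,q)}$ in Remark \ref{uniquenesslemma}. I would therefore expect the proof to be short, with the bulk of the work being the careful bookkeeping of which representative of $[q]$ to use.
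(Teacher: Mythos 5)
Your proposal is correct and follows essentially the same route as the paper: the contrapositive of (1)$\Rightarrow$(2) via Lemma \ref{equationswithsolutionslemma}(1) and Lemma \ref{lemma:equation}, and for (2)$\Rightarrow$(1) the same case split ($T\neq V_{[d^\infty]}$ versus $T=V_{[d^\infty]}$) with the same witnesses, namely an element of $L_{(d,q)}$ (nonempty by Remark \ref{uniquenesslemma}(1) since $s(d)\in U(T)$, after replacing $q$ by a representative not divisible by $d$) and $d^\infty$ respectively, each shown unsolvable by Lemma \ref{equationswithNOsolutionslemma}. The bookkeeping point you flag about choosing the representative of $[q]$ is exactly how the paper handles it.
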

\begin{proof}
(1) $\Rightarrow$ (2)   If $s(d)T= 0$, then   for any $t\in T$ we have $s(d)t=0$, so by Lemma \ref{equationswithsolutionslemma}(1)  the equation $(d-1)X=t$ admits a solution for any $t\in T$.    Applying Lemma~\ref{lemma:equation}(2), we get  that  ${\rm Ext}_{L_K(E)}^1(V_{[d^\infty]}, T) = 0$.

\smallskip

(2) $\Rightarrow$ (1)   First assume  $T\not=V_{[d^{\infty}]}$.  As observed in Remark~\ref{uniquenesslemma}, $T$ admits a generator $q$ not divisible by $d$ and $L_{(d, q)}$ is not empty. Let $p\in L_{(d, q)}$.  
By Lemma \ref{equationswithNOsolutionslemma},  the equation $(d-1)X=p$ has no solution in $\Vq$ and so, by Lemma~\ref{lemma:equation},  $\pi(\hat\rho_{p})\neq 0$.

On the other hand, suppose  $T=V_{[d^{\infty}]}$.  By Lemma \ref{equationswithNOsolutionslemma},  the equation $(d-1)X=d^{\infty}$ has no solution in $V_{[d^\infty]}$, and so, again invoking  Lemma~\ref{lemma:equation},  $\pi(\hat\rho_{d^{\infty}})\neq 0$.

In either case we have established the existence of a nonzero element in ${\rm Ext}^1_{L_K(E)}(\Vd, T).$ 

\smallskip

\end{proof}

\begin{corollary}\label{cor:Ext1VdVd}
Let $E$ be a finite graph.
For any simple closed path $d$, $\Ext^1_{L_K(E)}(\Vd, \Vd)\neq 0$.
\end{corollary}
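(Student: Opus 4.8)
The plan is to derive this immediately from Theorem~\ref{Ext(ST)Srational} by checking that the hypothesis (2) is automatically satisfied when $T = V_{[d^\infty]}$. Recall that Theorem~\ref{Ext(ST)Srational} asserts, for $E$ a finite graph, $d$ a simple closed path, and $T$ any Chen simple module, that $\Ext^1_{L_K(E)}(V_{[d^\infty]}, T) \neq 0$ if and only if $s(d) \in U(T)$. So the only thing to verify is that $s(d) \in U(V_{[d^\infty]})$.

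First I would recall the description of $U(T)$ from Remark~\ref{U(T)remark}: writing $T = V_{[q]}$ with $q = e_1 e_2 \cdots \in E^\infty$, a vertex $v$ lies in $U(T)$ precisely when there is an infinite path tail-equivalent to $q$ starting at $v$ (equivalently, a finite path from $v$ to some $s(e_i)$). Now take $q = d^\infty$, so that $q$ itself is an infinite path with $s(q) = s(d)$ and $q \sim d^\infty$. Hence $s(d)$ is the source of an infinite path tail-equivalent to $d^\infty$, which gives $s(d) \in U(V_{[d^\infty]})$ directly. (Alternatively, and even more simply: the basis element $d^\infty$ of $V_{[d^\infty]}$ satisfies $s(d) \cdot d^\infty = d^\infty \neq 0$, so $s(d) \in U(V_{[d^\infty]})$ by the very definition of $U$.)

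With this in hand, applying the implication (2) $\Rightarrow$ (1) of Theorem~\ref{Ext(ST)Srational} with $T = V_{[d^\infty]}$ yields $\Ext^1_{L_K(E)}(V_{[d^\infty]}, V_{[d^\infty]}) \neq 0$, which is exactly the claim. There is essentially no obstacle here: the corollary is a one-line specialization of the theorem, and the only content is the trivial observation that a Chen simple module $V_{[q]}$ always has $s(q)$ in its support set $U$. If one wanted to be fully self-contained one could instead cite the concrete witness produced inside the proof of Theorem~\ref{Ext(ST)Srational}, namely that $\pi(\hat\rho_{d^\infty}) \neq 0$ because, by Lemma~\ref{equationswithNOsolutionslemma}(2), the equation $(d-1)X = d^\infty$ has no solution in $V_{[d^\infty]}$; but invoking the theorem as a black box is cleanest.
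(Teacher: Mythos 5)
Your proposal is correct and matches the paper's (implicit) argument: the corollary is stated as an immediate specialization of Theorem~\ref{Ext(ST)Srational} to $T = V_{[d^\infty]}$, where $s(d)\in U(V_{[d^\infty]})$ holds trivially since $s(d)\cdot d^\infty = d^\infty \neq 0$. Your alternative remark about the concrete witness $\pi(\hat\rho_{d^\infty})\neq 0$ via Lemma~\ref{equationswithNOsolutionslemma}(2) is exactly the mechanism the paper uses inside the proof of that theorem, so there is no substantive difference.
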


\begin{example}
{\rm We again revisit the graph $R_2$:  
$$\xymatrix{\bullet^v \ar@(ul,dl)_e \ar@(ur,dr)^f} \ .$$
Let $q$ be any element of $R_2^\infty$.   Let $d$ be any (of the infinitely many)  simple closed paths in $R_2$.   Since  clearly Condition (2) of Theorem \ref{Ext(ST)Srational} is satisfied for $\Vq$, we get that ${\rm Ext}^1_{L_K({R_2})}(\Vd,\Vq) \neq 0$.  \hfill $\Box$

}
\end{example}

\medskip

Having now established conditions which ensure that there exist nontrivial extensions of the Chen simple module $T$ by the simple module $V_{[d^\infty]}$, we now give a more explicit description of the number of such extensions.

\begin{proposition}\label{dimExtSTprop}  Let $E$ be any finite graph.  Let $d$ be a simple closed path in $E$ and let $T$ be a  Chen simple module.  Assume  $q \in E^\infty$  such that $T=\Vq$.  
\begin{enumerate}
\item Suppose $ T \neq \Vd$.  Then $\dim_K \Ext_{L_K(E)}^1(\Vd, T)=|L_{(d, q)}|$.
\item  
 On the other hand, $\dim_K \Ext_{L_K(E)}^1(\Vd,\Vd) =|L_{(d, d^{\infty})}|+1$. 
\end{enumerate}
\end{proposition}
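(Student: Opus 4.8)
The plan is to convert the computation of $\dim_K{\rm Ext}^1_{L_K(E)}(V_{[d^\infty]},T)$ into the computation of the dimension of an explicit cokernel, and then to read off a $K$-basis of that cokernel from Remark~\ref{uniquenesslemma}. Since $E$ is finite, $L_K(E)$ is unital and ${\rm Hom}_{L_K(E)}(L_K(E),T)\cong T$ via $\hat\rho_t\leftrightarrow t$; under this identification the map $\hat\rho_{(d-1)_*}$ appearing in Lemma~\ref{lemma:equation} becomes left multiplication $t\mapsto(d-1)t$. Feeding this into the exact sequence of Lemma~\ref{lemma:equation} (itself extracted from the projective resolution of Theorem~\ref{projresofVcinftyfromL(E)v}) yields a $K$-linear isomorphism ${\rm Ext}^1_{L_K(E)}(V_{[d^\infty]},T)\cong T/(d-1)T$. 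It therefore suffices to compute $\dim_K T/(d-1)T$. Write $T=V_{[q]}$ and $v=s(d)$; in case (1) we choose, as we may, the representative $q$ to be not divisible by $d$.

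Next I would record two reductions modulo $(d-1)T$ at the level of the standard basis of $T$ (the infinite paths tail-equivalent to $q$, together with the vector $d^\infty$ itself when $T=V_{[d^\infty]}$). If a basis vector $r$ has $s(r)\neq v$, then $dr=0$, so $(d-1)r=-r$ and $r\equiv 0$. If $s(r)=v$ and $r\neq d^\infty$, write $r=d^np$ with $n$ maximal; then $p$ starts at $v$, is tail-equivalent to $q$, and is not divisible by $d$, so $p\in L_{(d,q)}$, and a telescoping identity gives $r-p=(v+d+\cdots+d^{n-1})(d-1)p\in(d-1)T$, whence $r\equiv p$. Finally $(d-1)d^\infty=0$. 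These three facts show that $L_{(d,q)}$ spans $T/(d-1)T$ when $T\neq V_{[d^\infty]}$, and that $L_{(d,q)}\cup\{d^\infty\}$ spans it when $T=V_{[d^\infty]}$; since $d^\infty$ is divisible by $d$ it does not lie in $L_{(d,d^\infty)}$, so these spanning sets have cardinalities $|L_{(d,q)}|$ and $|L_{(d,d^\infty)}|+1$. The degenerate case $s(d)\notin U(T)$ is consistent: then no basis vector starts at $v$, so $L_{(d,q)}=\emptyset$ and $T/(d-1)T=0$, matching Theorem~\ref{Ext(ST)Srational}.

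The decisive step is the $K$-linear independence of these spanning sets in $T/(d-1)T$, and this is where Lemma~\ref{equationswithNOsolutionslemma} enters. In case (1), suppose $t:=\sum_{p\in L_{(d,q)}}k_pp\in(d-1)T$ with $t\neq 0$; then $s(d)t=t$ and $\deg_d(t)=0$, since $t$ is exactly the degree-zero summand in the decomposition of Remark~\ref{uniquenesslemma}(1), so Lemma~\ref{equationswithNOsolutionslemma} forbids $(d-1)X=t$ from having a solution in $T$ --- contradicting $t\in(d-1)T$. Hence $t=0$, and the $k_p$ all vanish. In case (2), suppose $k\,d^\infty+\sum_pk_pp\in(d-1)T$. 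Because $(d-1)$ kills $d^\infty$ and maps each $d^jp'$ (with $p'\in L_{(d,d^\infty)}$) into the span $W$ of the basis vectors other than $d^\infty$, we have $(d-1)T\subseteq W$; comparing $d^\infty$-components forces $k=0$, and the leftover relation $\sum_pk_pp\in(d-1)T$ is then settled exactly as in case (1).

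The main obstacle I anticipate is the $d$-degree bookkeeping inside this last argument: one must check that the candidate basis vectors genuinely have $d$-degree $0$, so that Lemma~\ref{equationswithNOsolutionslemma} applies without modification, and one must correctly isolate the exceptional behaviour of the vector $d^\infty\in V_{[d^\infty]}$, which is the source of the extra ``$+1$'' in part~(2). Everything else reduces to routine manipulations in $L_K(E)$ and bookkeeping against the basis $[q]$ of the Chen simple module.
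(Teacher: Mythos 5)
Your proposal is correct and follows essentially the same route as the paper: identifying $\operatorname{Ext}^1_{L_K(E)}(V_{[d^\infty]},T)$ with the cokernel of left multiplication by $d-1$ on $T$ is just an explicit repackaging of the paper's use of Lemma~\ref{lemma:equation}, and your spanning argument (the telescoping identity, i.e.\ Lemma~\ref{equationswithsolutionslemma}(2), plus the reduction of basis paths to elements of $L_{(d,q)}$) and your independence argument via $\deg_d=0$ and Lemma~\ref{equationswithNOsolutionslemma} are exactly the paper's. The only cosmetic difference is that in part (2) you first split off the $d^\infty$-component using $(d-1)T\subseteq W$, whereas the paper applies Lemma~\ref{equationswithNOsolutionslemma} directly to $k_0d^\infty+\sum k_ip_i$; both are fine.
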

\begin{proof}
We consider the exact sequence
$$ \xymatrix{   {\rm Hom}_{L_K(E)}({L_K(E)},V_{[q]})  \ar[r]^{ \ \ \  \! \!  \! \! \!   \! \!  \! \! \!  \hat\rho_{(d-1)_*}} & {\rm Hom}_{L_K(E)}({L_K(E)},V_{[q]})  \ar[r]^{ \  \ \  \pi }  & {\rm Ext}_{L_K(E)}^1(\Vd,\Vq) \ar[r] & 0}.$$

(1) Without loss of generality we can assume $q$ is not divisible by $d$. By Remark~\ref{uniquenesslemma}(1) and Theorem~\ref{Ext(ST)Srational}, if $L_{(d, q)}=\emptyset$ then $\Ext_{L_K(E)}^1(\Vd, T)=0$. Otherwise,  by Lemmas \ref{equationswithNOsolutionslemma} and  \ref{lemma:equation}, $\pi({\hat\rho_{p}})\neq 0$ for  any path $p\in L_{(d, q)}$. 
We claim that  the set $\{ \pi(\hat\rho_{p}) \ | \ p\in L_{(d, q)}\}$ is  a basis for the  vector space $\Ext_{L_K(E)}^1(\Vd,T)$.
In order to show that $\{ \pi(\hat\rho_{p}) \ | \ p\in L_{(d, q)}\}$ is $K$-linearly independent, suppose there is a $K$-linear combination $0 = k_1 \pi(\hat\rho_{p_1}) + \dots + k_n \pi(\hat\rho_{p_n}) $ in $\Ext_{L_K(E)}^1(\Vd,\Vq)$, with $p_i \in L_{(d,q)}$.   Let $t =k_1p_1+\dots+ k_np_n$ in $\Vq$ so that $\pi(\hat\rho_t)=0$ in $\Ext_{L_K(E)}^1(\Vd,\Vq)$. Thus, applying   Lemma \ref{lemma:equation}, we get that  the equation $(d-1)X=t$ has a solution in $\Vq$. If $t\not=0$, since $s(d)t=t$ and $p_i \in L_{(d,q)}$ we get ${\rm deg}_d(t) = 0$, which is a contradiction by Lemma \ref{equationswithNOsolutionslemma}.  Hence $t=0$ and by the linear independence of $\{p_1,\dots,p_n\}$ in $\Vq$ we get that $k_1 = \dots = k_n = 0.$   So $\{ \pi(\hat\rho_{p}) \ | \ p\in L_{(d, q)}\}$ is $K$-linearly independent.  

We now show that $\{ \pi(\hat\rho_{p}) \ | \ p\in L_{(d, q)}\}$ spans $\Ext_{L_K(E)}^1(\Vd,T)$.  As $\pi$ is surjective, by Lemma \ref{lemma:equation}(1) it suffices to show that any $\pi(\hat\rho_t) \in {\rm Ext}_{L_K(E)}^1(\Vd,T)$ is a $K$-linear combination of elements from this set.  
Write   $t=t'+t''$ where $t'=\sum_{i=1}^{m_{t'}} k_ip'_i$ with $s(p'_i)= s(d)$ and $t''=\sum_{j=1}^{m_{t^{''}}} k_j p^{''}_j$ with $s(p^{''}_j)\neq  s(d)$. By Lemma \ref{equationswithsolutionslemma}(1),  the equation $(d-1)X=t$ has solution in $T=\Vq$ if and only if $(d-1)X=t'$ has solution in $\Vq$, so  we can assume without loss of generality that $s(d)t=t$. Hence $t=t_0+dt_1+d^2t_2+\dots+d^st_s$, and so $\hat\rho_t=\hat\rho_{t_0}+\hat\rho_{dt_1}+\hat\rho_{d^2t_2}+\dots+\hat\rho_{d^st_s}$,  where each $t_i$ is of the form $t_i=\sum_{j=1}^{m_i} k_{ij}u_{ij}$,  for some $u_{ij}\in L(d,q)$. Thus $\pi(\hat\rho_t)=\sum_{j=1}^{m_0}k_{0j}\pi(\hat\rho_{u_{0j}}) + \sum_{j=1}^{m_1}k_{1j}\pi(\hat\rho_{du_{1j}})+\dots+\sum_{j=1}{m_s}k_{sj}\pi(\hat\rho_{d^su_{sj}})$. 
Observe that, by Lemmas \ref{equationswithsolutionslemma}(2) and \ref{lemma:equation}, we get   $\pi(\hat\rho_{d^nu}-\hat\rho_{u})=0$ for any $u\in L_{(d, q)}$ and any $n\in \mathbb{N}$, so $\pi(\hat\rho_{d^nu})=\pi(\hat\rho_{u})$ for any $n\in \mathbb{N}$. Hence   $\{ \pi(\hat\rho_{u}) \ | \ u\in L_{(d, q)}\}$ is  a set of generators for $\Ext_{L_K(E)}^1(\Vd,T)$. 

\smallskip

(2) Let us show that $\{\pi({\hat\rho_{p}}) \ |  \   p\in L_{(d, d^{\infty})}\}\cup \{ \pi(\hat\rho_{d^{\infty}})\}$ is a basis for ${\rm Ext}_{L_K(E)}^1(\Vd,\Vd)$. 
First observe that, by Lemmas \ref{equationswithNOsolutionslemma} and  \ref{lemma:equation}(2), $\pi(\hat\rho_{d^{\infty}})\neq 0$ and  $\pi({\hat\rho_{p}})\neq 0$ for  any  $p\in L_{(d, d^{\infty})}$.  
Arguing as in part (1) we claim  that $\{\pi({\hat\rho_{p}}) \ |  \   p\in L_{(d, d^{\infty})}\}\cup \{ \pi(\hat\rho_{d^{\infty}})\}$ is a  linearly independent set in ${\rm Ext}_{L_K(E)}^1(\Vd,\Vd)$.  Indeed, consider a $K$-linear combination $0 = k_0 \pi(\hat\rho_{d^{\infty}}) +  k_1 \pi({\hat\rho_{p_1}}) + \dots + k_n\pi({\hat\rho_{p_n}})$ in ${\rm Ext}_{L_K(E)}^1(\Vd,\Vd)$.  Define $y =k_0 d^{\infty}+k_1p_1+\dots+ k_np_n \in \Vd$ so that $\pi(\hat\rho_y)=0$ and hence, by Lemma \ref{lemma:equation}(2),  the equation $(d-1)X=y$ has a solution in $\Vd$.  Note that if $y\not=0$ then ${\rm deg}_d(y) = 0$ (whether or not $k_0 = 0$) since each $p_i \in L_{(d,d^\infty)}$, which is a contradiction
 by Lemma \ref{equationswithNOsolutionslemma}.     So $y = 0$, which yields that each $k_i$ ($0 \leq i \leq n$) is $0$.

Since any $t$ in $\Vd$ with $s(d)t=t$ is of the form $t=kd^{\infty}+t_0+dt_1+d^2t_2+\dots+d^st_s$,  using the same arguments as in part (1) it can be easily be shown that the set $\{\pi({\hat\rho_{p}}) \ |  \   p\in L_{(d, d^{\infty})}\}\cup \{ \pi(\hat\rho_{d^{\infty}})\}$ spans ${\rm Ext}_{L_K(E)}^1(\Vd,\Vd)$.
\end{proof}

\begin{lemma}\label{Ldqisinfinite}
Let $d$ be a simple closed path in the finite graph $E$.  
\begin{enumerate} 
\item If $q \in E^\infty$ is irrational and  $L_{(d, q)}\neq \emptyset$, then $|L_{(d, q)}|$ is infinite. 
\item If $L_{(d, d^{\infty})}\neq \emptyset$, then $|L_{(d, d^{\infty})}|$ is infinite.
\end{enumerate}
 \end{lemma}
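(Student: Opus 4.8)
The plan is to start from a single element of $L_{(d,q)}$ and "pump" it along a cycle to manufacture infinitely many. Fix $p\in L_{(d,q)}$, write $d=g_1g_2\cdots g_\ell$ and $p=e_1e_2e_3\cdots$. The first observation, common to both parts, is that $p$ carries a built-in deviation from $d$: since $s(p)=s(d)$ but $p$ is not divisible by $d$, the longest common prefix $e_1\cdots e_j=g_1\cdots g_j$ of $p$ and $d$ has length $j\le\ell-1$, with $e_{j+1}\neq g_{j+1}$ and $s(e_{j+1})=s(g_{j+1})=:u$, a vertex lying on $d$; thus $e_{j+1}$ is an exit of $d$ at $u$. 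The use of this is a non-divisibility criterion: any infinite path whose first $j+1$ edges are $e_1,\dots,e_{j+1}$ cannot be divisible by $d$, because $j+1\le\ell$ and its $(j+1)$-st edge disagrees with $g_{j+1}$. Every path I construct will have this feature, so non-divisibility by $d$ is automatic, and the real content is producing infinitely many distinct tail-equivalent paths.

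For part (1), $p$ is irrational (being tail-equivalent to the irrational $q$), and since $E$ is finite, by the pigeonhole principle some vertex $z$ satisfies $s(e_k)=z$ for infinitely many $k$; pick $j+1<N_1<N_2$ with $s(e_{N_1})=s(e_{N_2})=z$. Put $\alpha=e_1\cdots e_{N_1-1}$, let $\sigma=e_{N_1}\cdots e_{N_2-1}$ be the nonempty closed path at $z$ cut out between these occurrences, and let $r=\tau_{>N_1-1}(p)$, so $p=\alpha r$ and $r$ begins with $\sigma$. I then set $p^{(n)}:=\alpha\sigma^n r$ for $n\ge0$. Each $p^{(n)}$ is a legitimate infinite path starting at $s(d)$; it is tail-equivalent to $r$, hence to $p$, hence to $q$ by transitivity of $\sim$; and it is not divisible by $d$ since it starts with $\alpha$, which already contains the deviation at step $j+1\le N_1-1$. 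For distinctness, $\alpha\sigma^n r=\alpha\sigma^{n'}r$ with $n<n'$ gives $r=\sigma^{n'-n}r$, i.e. $\tau_{>(n'-n)|\sigma|}(r)=r$, so $r$ is rational by Remark~\ref{pathsremark} — impossible, as a tail of the irrational path $p$ is irrational.

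For part (2), $p\sim d^\infty$, so $p$ is eventually periodic: $\tau_{>M}(p)$ equals a tail of $d^\infty$, which is purely periodic and visits every vertex of $d$, in particular $u$. Hence there is $P>\max(M,j+1)$ with $s(e_P)=u$, and $\tau_{>P-1}(p)$ is precisely $\widehat{d}_u^{\infty}$, where $\widehat{d}_u=g_{j+1}g_{j+2}\cdots g_\ell g_1\cdots g_j$ is the re-basing of $d$ at $u$; the crucial point is that $\widehat{d}_u$ begins with the edge $g_{j+1}$. Now take $\alpha=e_1\cdots e_j$, let $\sigma=e_{j+1}\cdots e_{P-1}$ (a nonempty closed path at $u$ whose first edge is $e_{j+1}$), and let $r=\widehat{d}_u^{\infty}$, so $p=\alpha\sigma r$; define $p^{(n)}:=\alpha\sigma^n r$ for $n\ge1$. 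As before each $p^{(n)}$ starts at $s(d)$, is tail-equivalent to $\widehat{d}_u^{\infty}$ and hence to $d^\infty$, and is not divisible by $d$ — the deviation now occupying the very first edge of $\sigma$, i.e. step $j+1$. For distinctness, $\alpha\sigma^n r=\alpha\sigma^{n'}r$ with $n<n'$ would force $r=\sigma^{n'-n}r$; but the first edge of the left side is $g_{j+1}$ (the first edge of $\widehat{d}_u$), while the first edge of the right side $\sigma^{n'-n}r$ is $e_{j+1}\neq g_{j+1}$ — a contradiction.

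The main obstacle is precisely the distinctness in part (2): every element of $L_{(d,d^\infty)}$ is an eventually periodic infinite path, so the naive "insert a cycle $n$ times" operation can easily reproduce the same path, and the irrationality argument of part (1) is unavailable. The resolution is to cut the cycle $\sigma$ so that it begins with the exit edge $e_{j+1}$, which differs from the first edge $g_{j+1}$ of the periodic tail $\widehat{d}_u^{\infty}$; this single-edge mismatch does double duty, simultaneously forcing non-divisibility by $d$ and ruling out $r=\sigma^m r$. Everything else — that the $p^{(n)}$ are honest infinite paths with the correct source and the correct tail-equivalence class — is a routine verification once $\alpha$, $\sigma$, and $r$ are in place.
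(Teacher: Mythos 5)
Your proof is correct, and it reaches the conclusion by a construction that differs from the paper's in both parts. For (1), the paper first replaces $q$ by a tail-equivalent path so that every vertex it visits is visited infinitely often, writes $p=\beta q_0$ with $w=r(\beta)$, and then appends to the fixed prefix $\beta$ the infinitely many distinct tails of $q$ based at $w$ (distinct by irrationality via Remark~\ref{pathsremark}); you instead keep both the prefix and the tail of $p$ fixed and pump a closed path $\sigma$ extracted from $p$ by pigeonhole, again using Remark~\ref{pathsremark} to exclude $r=\sigma^{m}r$. For (2), the paper appeals to Remark~\ref{uniquenesslemma}(2) to get a closed path $c\neq d$ at $s(d)$ and takes the family $c^{i}d^{\infty}$, while you pump a cycle based at the exit vertex $u$, cut so that it begins with the exit edge $e_{j+1}\neq g_{j+1}$; that single mismatch simultaneously forces non-divisibility by $d$ and distinctness, and on the non-divisibility point your argument is actually more explicit than the paper's, which does not address why its paths avoid beginning with $d$.

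One small imprecision in your part (2): you justify the existence of $P$ by saying the periodic tail of $p$ ``visits $u$.'' Since a simple closed path in Chen's sense may repeat vertices, an occurrence of $u$ alone does not guarantee that $\tau_{>P-1}(p)$ is the particular rotation of $d$ beginning with $g_{j+1}$. You should instead choose $P$ (beyond the merge point $M$) at a position congruent to $j$ modulo the length of $d$ inside the periodic tail, i.e., at an occurrence of the edge $g_{j+1}$ itself, which certainly recurs. This is a one-line fix and does not affect the rest of the argument.
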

 \begin{proof} 
 (1)
 Let  $q=e_1e_2\cdots$  for $e_i\in E^1$.  First notice that, for any  $w\in E^0$,  if   $w=r(e_i)$ for some $i>0$, then we can assume without loss of generality that $w=r(e_j)$ for infinitely many $j>0$ (as otherwise, since $E^1$ is finite, we can replace $q$ with $q'\in E^\infty$ for which  $q\sim q'$ and $w \not\in (q')^0$).  
 
  Consider now an element  $p\in L_{(d, q)}$. Then $p=\beta q_0$ and $q=\gamma q_0$ for some $q_0 \in [q]$ and $\beta, \gamma \in {\rm Path}(E)$ and $\beta$ not divisible by $d$. Consider  $w=r(\beta)=s(q_0)$. Then, by the previous assumption, there exists a set of infinite and distinct truncations  $\{ \tau_{>n_k}(q) \ |  \ k\in \mathbb{N}\}$   such that, for each $k\in \mathbb{N}$, $q= \gamma_k w \tau_{>n_k}(q)$ for some $\gamma_k \in {\rm Path}(E)$.  Since $q$ is irrational, by Remark \ref{pathsremark} the infinite paths in the set  $\{\tau_{>n_k}(q) \ | \ k\in \N\}$ are distinct. Hence there are infinitely many distinct elements $ \beta \tau_{>n_k}(q)$ in $ L_{(d, q)}$, which establishes (1).
  
  \smallskip
  
 (2) If $L_{(d, d^{\infty})}\neq \emptyset$, then   there is at least one  simple closed path $c$ for which $s(c)=s(d)$ and $c\neq d$.
Then we easily get that each of the distinct paths $\{ c^i d^{\infty} \ | \ i\in \N \}$ is tail equivalent to $d^{\infty}$, which gives that  $L_{(d, d^{\infty})}$ is infinite.
 \end{proof}
 
\begin{corollary}
Let $d$ be a simple closed path in $E$ and $T$ a Chen simple  module. If $\dim_K \Ext^1_{L_K(E)}(\Vd, T)$ is finite, then $T=V_{[c^{\infty}]}$ for a simple closed path $c$.
%
\end{corollary}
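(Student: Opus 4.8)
The approach is to sort $T$ by the trichotomy of Chen simple modules recalled at the start of Section~\ref{projresSection}, and then to quote, essentially verbatim, the two results immediately preceding the corollary. Write $T\cong V_{[q]}$, where $q$ is (up to tail equivalence) a sink $w$, or $c^\infty$ for a simple closed path $c$, or an irrational infinite path. The middle alternative is exactly the conclusion we want, so the real task is to show that an irrational $q$ is incompatible with $\dim_K\Ext^1_{L_K(E)}(\Vd,T)$ being finite (and nonzero).

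So suppose $q$ is irrational. Since $d^\infty$ is rational while $q$ is not, $q\not\sim d^\infty$, hence $T\neq V_{[d^\infty]}=\Vd$ by Theorem~\ref{Chentheoremforsimples}; Proposition~\ref{dimExtSTprop}(1) then gives $\dim_K\Ext^1_{L_K(E)}(\Vd,T)=|L_{(d,q)}|$. By Lemma~\ref{Ldqisinfinite}(1), $|L_{(d,q)}|$ is infinite as soon as $L_{(d,q)}\neq\emptyset$; and $L_{(d,q)}=\emptyset$ is, via Remark~\ref{uniquenesslemma}(1) together with Theorem~\ref{Ext(ST)Srational}, equivalent to $s(d)\notin U(T)$, i.e.\ to $\Ext^1_{L_K(E)}(\Vd,T)=0$. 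Hence for irrational $q$ the dimension is either $0$ or infinite, never finite and positive, and this rules out the irrational type. This step is short only because the laborious part --- producing an explicit $K$-basis of $\Ext^1$ --- was already carried out in Proposition~\ref{dimExtSTprop}.

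The step I expect to absorb the most work, and the genuine obstacle, is the sink case $T=V_{[w^\infty]}$, where Proposition~\ref{dimExtSTprop} is not directly applicable. Here I would instead apply $\Hom_{L_K(E)}(-,V_{[w^\infty]})$ to the projective resolution $0\to L_K(E)\to L_K(E)\to\Vd\to0$ of Theorem~\ref{projresofVcinftyfromL(E)v} (with maps $\hat\rho_{d-1}$ and $\hat\rho_{d^\infty}$), identify $\Ext^1_{L_K(E)}(\Vd,V_{[w^\infty]})$ with the cokernel of left multiplication by $d-1$ on $V_{[w^\infty]}\cong L_K(E)w$, and then describe a $K$-basis of that cokernel in terms of paths in $E$ from $s(d)$ to $w$. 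The delicacy is in matching the outcome to the statement --- recalling the convention $w=w^\infty$, under which a sink is treated as a degenerate closed path --- and in tracking the degenerate cases where $\Ext^1$ vanishes; the irrational case above is, by contrast, almost immediate.
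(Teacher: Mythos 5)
Your irrational-case paragraph is precisely the paper's proof: the paper settles the corollary in one line by citing Proposition~\ref{dimExtSTprop} and Lemma~\ref{Ldqisinfinite}, i.e.\ for $T\neq \Vd$ one has $\dim_K\Ext^1_{L_K(E)}(\Vd,T)=|L_{(d,q)}|$, and this cardinality is $0$ or infinite when $q$ is irrational. So on the case the paper actually addresses you are in full agreement, including the silent strengthening of ``finite'' to ``finite and nonzero'': if $s(d)\notin U(T)$ then the dimension is $0$, which is finite, yet the conclusion fails literally for irrational $q$; you elide this exactly as the paper does (your phrase ``never finite and positive'').

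Where you diverge is the sink case, which you single out as the main obstacle and then leave as an unexecuted plan; the paper simply does not discuss it. More importantly, your plan cannot end the way you hope. Carrying out your computation of the cokernel of $d-1$ on $V_{[w^\infty]}\cong L_K(E)w$ gives a $K$-basis consisting of classes of paths from $s(d)$ to $w$ not divisible by $d$; equivalently, the proof of Proposition~\ref{dimExtSTprop}(1) goes through with $q=w^\infty$ (recall the paper's convention $w^\infty\in E^\infty$), so $\dim_K\Ext^1_{L_K(E)}(\Vd,V_{[w^\infty]})=|L_{(d,w^\infty)}|$, and this can be finite and nonzero: in the graph with a single loop $d$ at $v$, one edge from $v$ to a sink $w$, and nothing else, one checks directly that $V_{[w^\infty]}/(d-1)V_{[w^\infty]}$ is one-dimensional, spanned by the class of the edge $v\to w$. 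Hence the sink type cannot be ``ruled out''; the statement survives for sinks only if one reads ``simple closed path $c$'' loosely enough to admit the degenerate case $c=w$, $c^\infty=w^\infty$, or reads the hypothesis as excluding sinks (and zero Ext). In short: your main argument coincides with the paper's; your additional sink step is incomplete as written, and completing it along the lines you propose would expose a defect in the literal statement rather than finish a proof of it.
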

\begin{proof}
It follows directly from  Lemma \ref{Ldqisinfinite} and Proposition~\ref{dimExtSTprop}. 
\end{proof}

\begin{example}\label{L(dq)finiteExample} 
{\rm For each $n\in \N$, consider the graph 
  $$E_n \ =  \ \ \xymatrix{ \bullet^v \ar@(ul,dl)_d \ar[r]^{(n)}
& \bullet^w  \ar@(ur,dr)^f} \ \ ,$$
where the symbol $(n)$ indicates that there are $n$ edges $\{ e_1, \dots , e_n\}$ for which  $s(e_i) = v$ and $r(e_i) = w$.   Then in $E_n$ we have $L_{(d,f^\infty)} = \{e_1 f^\infty, \dots , e_nf^\infty\}$, so that $|L_{(d,f^\infty)}| = n$. By Proposition~\ref{dimExtSTprop}(1) we conclude that $\dim_K\Ext_{L_K({E_n})}^1(\Vd,V_{[f^\infty]})=n.$
}
\end{example}

\begin{example}\label{R1Example} 
{\rm Consider the graph 
  $$R_1 \ =  \ \ \xymatrix{ \bullet^v \ar@(ur,dr)^d} \ \ ,$$
for which $L_K(R_1) \cong K[x,x^{-1}]$.  Then $L_{(d,d^\infty)}$ is empty, hence  by Proposition~\ref{dimExtSTprop}(2) we conclude that  $\dim_K\Ext^1_{L_K({R_1})}(\Vd,\Vd)=1.$  
}
\end{example}




%

\smallskip

Having given a complete analysis of ${\rm Ext}_{L_K(E)}^1(\Vd,T)$ for any simple closed path $d$ of $E$ and any Chen simple module $T$, we now analyze ${\rm Ext}_{L_K(E)}^1(\Vp,T)$ for any irrational infinite path $p$ and any  Chen simple module $T$. The projective resolution of $\Vp$ we are going to use is the one introduced in the proof of Theorem~\ref{Vpinftynotfinitelypresented}: 
 $$ \xymatrix{ 0 \ar[r] &  {\rm Ker}(\rho_p) \ar[r] & L_K(E)v \ar[r]^{ \ \ \ \rho_p} &  V_{[p]} \ar[r] & 0},$$
where ${\rm Ker}(\rho_p) =    \oplus_{i=0}^\infty J_i(p)$ as in Corollary \ref{KernelissumofJsubi} and $v=s(p)$. 

\begin{remark}\label{rem:idempotent}
{\rm Let $E$ be a finite graph and $u\in L_K(E)$. For each left $L_K(E)$-module $M$, any morphism $\phi\in\Hom_{L_K(E)}(L_K(E) u, M)$ is the right product by the element $\phi(u)$ of $M$. If $u$ is an idempotent, then it is $\Hom_{L_K(E)}(L_K(E) u, M)\cong uM$ as abelian groups, by means of the isomorphism $\phi\mapsto \phi(u)=u\phi(u)$.}\end{remark}
%
%
%
%
%
%

In order to state the analog of Theorem~\ref{Ext(ST)Srational} for the irrational case we need the following notation. Let $p = e_1e_2\cdots \in E^\infty$ be an irrational infinite path in $E$.  For each $i\geq 0$ let $X_i(p)$ denote the set $\{f \in E^1 \ | \ s(f) = s(e_{i+1}) \ {\rm and}  \ f \neq e_{i+1}\}$  as presented  in Definition \ref{XsubiDefinitions}, and define 
$$r(X_i(p)) := \{w\in E^0 \ | \ w = r(f) \ \mbox{for some} \ f \in X_i(p)\}.$$ 
Finally, for any $p\in E^{\infty}$ and any $i\geq 0$ recall from Definitions \ref{XsubiDefinitions} that the left $L_K(E)$-ideal $J_i(p)$ is 
 $$ J_i(p) = \sum_{ f \in X_i(p)} L_K(E) f^* p_i^* \ ,$$
 where $p_i$ denotes the truncation $\tau_{\leq i}(p)$.
As proved in Lemma~\ref{eachJiisadirectsum}, if $p$ is irrational, then 
$$J_i (p) \ =  \ \bigoplus_{f\in X_i(p)}L_K(E)  f^* p_i^*  \  \cong \ \bigoplus_{f\in X_i(p)}L_K(E)r(f).$$

\smallskip


\begin{lemma}\label{(Li)tequalszero}   Let $p$ be an irrational infinite path in the finite graph $E$.  Let $T$ denote a Chen simple module and let $t \in T$.    Then there exists a positive integer $N = N(t)$ for which $(J_i(p))t = 0$ for all $i\geq N$.  
\end{lemma}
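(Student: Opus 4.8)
The statement asserts that for a fixed element $t$ of a Chen simple module $T$, only finitely many of the ideals $J_i(p)$ act nontrivially on $t$. My approach is to reduce the action of $J_i(p)$ on $t$ to an action of ghost paths, and then exploit the concrete description of $T$ as a vector space with basis a tail-equivalence class. First I would write $t = \sum_{k=1}^m \lambda_k q^{(k)}$ as a finite $K$-linear combination of distinct infinite paths $q^{(k)} \in [q]$ (where $T = V_{[q]}$), and likewise recall from Lemma \ref{eachJiisadirectsum} that $J_i(p) = \bigoplus_{f \in X_i(p)} L_K(E) f^* p_i^*$, so every element of $J_i(p)$ is a sum of terms of the form $r f^* p_i^*$ with $r \in L_K(E)$ and $f \in X_i(p)$. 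Since $L_K(E)$ acts $K$-linearly, it suffices to show that for each basis path $q^{(k)}$ there is an $N_k$ with $f^* p_i^* \cdot q^{(k)} = 0$ for all $i \geq N_k$ and all $f \in X_i(p)$; then $N(t) = \max_k N_k$ works (and $r$ applied to $0$ is $0$).

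The heart of the matter is the action of the ghost monomial $f^* p_i^*$ on an infinite path $q^{(k)} = g_1 g_2 g_3 \cdots$. By Definition \ref{Chendef}, $p_i^* \cdot q^{(k)} = \tau_{>i}(q^{(k)})$ if $p_i = g_1 g_2 \cdots g_i$ (i.e., the length-$i$ initial segment of $q^{(k)}$ equals the length-$i$ truncation $p_i$ of $p$), and is $0$ otherwise; and then $f^* \cdot \tau_{>i}(q^{(k)})$ is nonzero only when $f = g_{i+1}$. So $f^* p_i^* \cdot q^{(k)} \neq 0$ forces $g_1 \cdots g_i = e_1 \cdots e_i$ and $g_{i+1} = f$ for some $f \in X_i(p)$, the latter meaning $s(f) = s(e_{i+1})$ but $f \neq e_{i+1}$. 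Combining: $q^{(k)}$ and $p$ agree on their first $i$ edges but differ at edge $i+1$. The key observation is that this can happen for at most one value of $i$ for a given $q^{(k)}$: if $q^{(k)}$ and $p$ agree on edges $1$ through $i$ but differ at edge $i+1$, then for any $j > i$ they already disagree on the initial segment of length $j$ (namely at position $i+1$), so $p_j^* \cdot q^{(k)} = 0$; and for $j < i$ they agree through position $j+1 \le i$, hence $g_{j+1} = e_{j+1}$, so no $f \in X_j(p)$ equals $g_{j+1}$ and again the action is zero. Thus for each $q^{(k)}$ there is at most one $i$ with $(J_i(p)) q^{(k)} \neq 0$; taking $N(t)$ one more than the maximum such index over $k = 1, \ldots, m$ (or $N(t) = 1$ if no such index occurs) gives the result.

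I do not expect a serious obstacle here; the proof is essentially a bookkeeping argument on the combinatorics of infinite-path prefixes, entirely parallel to the computations already done in Lemma \ref{lemma:x} and Corollary \ref{KernelissumofJsubi}. The one point requiring a little care is making sure the reduction from "general element of $J_i(p)$" to "the ghost monomial $f^* p_i^*$" is clean — this is exactly where I invoke Lemma \ref{eachJiisadirectsum} to write $J_i(p)$ as a sum of principal left ideals generated by $f^* p_i^*$, together with the $K$-linearity and left-module structure of $T$, so that the vanishing of $f^* p_i^* \cdot q^{(k)}$ for all relevant $f$ and all $i \geq N$ propagates to the vanishing of $(J_i(p)) t$. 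The finiteness of $E^1$ is not even needed for this argument, only the finiteness of the support of $t$ and of the set $X_i(p)$ (the latter guaranteed since each vertex of $p$ is not an infinite emitter, which holds automatically as $E$ is finite).
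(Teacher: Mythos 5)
Your proof is correct and takes essentially the same route as the paper: both expand $t$ as a finite combination of basis paths of $T$, use $J_i(p)=\bigoplus_{f\in X_i(p)}L_K(E)f^*p_i^*$ to reduce to the action of the generators $f^*p_i^*$, and then observe that a basis path different from $p$ fails to agree with $p_i$ for all large $i$, while $p$ itself is annihilated by every $f^*$ with $f\neq e_{i+1}$ (the paper splits this into Case 1, $T\neq V_{[p]}$, and Case 2, $T=V_{[p]}$, whereas you treat both uniformly and add the harmless sharper remark that at most one index $i$ can act nontrivially on a given basis path). The only point you gloss over is the sink-type Chen simple $V_{[w^\infty]}$, whose basis consists of finite paths $\alpha w$ rather than genuine infinite sequences $g_1g_2g_3\cdots$; the argument goes through even more easily there, since for $i$ at least the length of $\alpha$ the first $i$ edges of $\alpha w$ cannot equal $p_i$ (the infinite path $p$ would have to continue past the sink $w$), so $f^*p_i^*\cdot\alpha w=0$ for all such $i$.
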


\begin{proof}  Assume $q\in E^{\infty}$ ($q$ can be rational, irrational or a sink) and let $\alpha \in {\rm Path}(E)$ with ${\rm length}(\alpha)=n$. Observe that, for any $i\geq n$,  one has $p_i^*\alpha q=0$ unless  $p_i=\alpha \tau_{\leq {i-n}}(q)$.  So  if  $p_i^*\alpha q\neq 0 $ for all $i\in \mathbb{N}$, we can conclude  $p=\alpha q$. Finally notice that, if there exists $N\in \mathbb{N}$ such that $p_N^*\alpha q= 0 $, then $p_i^*\alpha q= 0 $ for any $i\geq N$.

(Case 1.)  Let   $T\neq V_{[p]}$ and let $q\in E^{\infty}$, necessarily  not tail-equivalent to $p$, such that $T=V_{[q]}$. 
Consider an element $t\in T$. Then $t$ can be written as $t= \sum_{u = 1}^s k_u\alpha_u\tau_{>i_{u}}(q)$, where the  $\alpha_u$'s are  in ${\rm Path}(E)$ and the $i_u$'s are in $\mathbb{N}$, and hence $\alpha_u \tau_{>i_{u}}(q) \neq p$ for any $u=1, \dots, s$.  Since  any element in $(J_i(p))t$ is a finite sum of expressions of the form $k_uf^*p_i^*\alpha_u\tau_{>i_{u}}(q)$, and since  $\alpha_u \tau_{>i_{u}}(q)\neq p$, by the previous observations we can choose an $N=N(t)$ sufficiently large such that $(J_i(p))t = 0$ for any $i\geq N$.

\smallskip

(Case 2.)   On the other hand, let    $T = V_{[p]}$ and  consider an element $t\in T$. Then $t$ can be written as $t= \sum_{u = 1}^s k_u q_u$, where the $q_u$'s  are tail equivalent to $p$. 
Let $f\in X_i(p)$, $i\geq 0$; if $q_u=p$ then $f^*p_i^*p=f^*\tau_{>i}(p)=0$,  by construction of $f^*$. If $q_u\neq p$, there exists an integer $i_u>0$ such that $\tau_{\leq i_u}(q)\neq p_{i_u}$ and hence $p_{i_u}^*\tau_{\leq i_u}(q)=0$.  Then, by the initial observation, if $N=N_t=\max\{i_u: u=1,...,s\}$, we conclude that  $(J_i(p))t = 0$ for any $i\geq N$.
\end{proof}

\smallskip

\begin{lemma}\label{U(T)lemma}  Let $E$ be a finite graph, and let $p$ be an infinite irrational path in $E$.  Let $T$ be a Chen simple $L_K(E)$-module. Then   ${\rm Hom}_{L_K(E)}(J_i(p), T) \neq 0$ if and only if $r(f) \in U(T)$ for some $f\in X_i(p)$.  
\end{lemma}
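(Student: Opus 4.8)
The plan is to use the direct sum decomposition $J_i(p) = \bigoplus_{f\in X_i(p)}L_K(E)f^*p_i^*$ from Lemma~\ref{eachJiisadirectsum}, which gives
$$\Hom_{L_K(E)}(J_i(p), T) \cong \prod_{f\in X_i(p)}\Hom_{L_K(E)}(L_K(E)f^*p_i^*, T) \cong \prod_{f\in X_i(p)}\Hom_{L_K(E)}(L_K(E)r(f), T).$$
Since $X_i(p)$ is a finite set (as $E$ is finite, hence row-finite), this product is finite, so it is nonzero if and only if at least one factor $\Hom_{L_K(E)}(L_K(E)r(f), T)$ is nonzero. Thus the proof reduces to the observation that for a vertex $w = r(f)$, $\Hom_{L_K(E)}(L_K(E)w, T) \neq 0$ if and only if $w \in U(T)$.

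For that reduction step I would invoke Remark~\ref{rem:idempotent}: since $w$ is an idempotent in $L_K(E)$ (here $E$ finite is used so that $L_K(E)$ is unital, as required by that remark), we have $\Hom_{L_K(E)}(L_K(E)w, T) \cong wT$ as abelian groups via $\phi \mapsto \phi(w)$. By the very definition of $U(T)$, the set $wT$ is nonzero precisely when $w \in U(T)$. Combining, $\Hom_{L_K(E)}(J_i(p), T) \neq 0$ iff some factor $wT$ with $w = r(f)$, $f \in X_i(p)$, is nonzero, iff $r(f) \in U(T)$ for some $f \in X_i(p)$.

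The only point requiring a little care is the identification $\Hom_{L_K(E)}(J_i(p),T) \cong \prod_f \Hom_{L_K(E)}(L_K(E)f^*p_i^*,T)$ being compatible with the isomorphisms $L_K(E)f^*p_i^* \cong L_K(E)r(f)$ established in Lemma~\ref{eachJiisadirectsum} (via $x \mapsto xp_if$); but this is automatic, since $\Hom$ out of a finite direct sum splits as a product over the summands and isomorphic summands contribute isomorphic Hom-groups. I do not anticipate a genuine obstacle here — the statement is essentially a bookkeeping consequence of the structural results already proved, with the finiteness of $E$ entering twice: once to know $X_i(p)$ is finite (so the product reduces to a finite one and nonvanishing can be detected factorwise) and once to know $L_K(E)$ is unital (so Remark~\ref{rem:idempotent} applies to the idempotent $r(f)$).
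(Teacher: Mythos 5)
Your proposal is correct and follows essentially the same route as the paper: decompose $J_i(p)\cong\bigoplus_{f\in X_i(p)}L_K(E)r(f)$ via Lemma~\ref{eachJiisadirectsum}, pass $\Hom$ through the (finite) direct sum, and identify $\Hom_{L_K(E)}(L_K(E)r(f),T)\cong r(f)T$ by Remark~\ref{rem:idempotent}, concluding via the definition of $U(T)$. The points you flag — finiteness of $X_i(p)$ and unitality of $L_K(E)$ — are exactly the ones the paper's proof records.
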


\begin{proof}  By standard ring theory, we have the following isomorphisms of abelian groups $$\hspace{-2in}{\rm Hom}_{L_K(E)}(J_i(p), T) \cong {\rm Hom}_{L_K(E)}(\oplus_{f\in X_i(p)}{L_K(E)}r(f), T) $$
$$\hspace{.65in}\cong \oplus_{f\in X_i(p)}{\rm Hom}_{L_K(E)}({L_K(E)}r(f), T) \cong \oplus_{f\in X_i(p)}r(f)T,$$

\smallskip
\noindent
where the second isomorphism holds because  $|X_i(p)|$ is finite, and the final one by Remark~\ref{rem:idempotent} because each $r(f)$ is idempotent.
\end{proof}




\begin{theorem}\label{Ext(ST)Sirrational} {\rm (Type (3))}     Let $p$ be an irrational infinite path in the finite graph $E$ and  let $T$ be any Chen simple ${L_K(E)}$-module.  Then ${\rm Ext}^1_{L_K(E)}(\Vp,T) \neq 0 $ if and only if 
 $r(X_i(p)) \cap U(T) \neq \emptyset$ for infinitely many $i\geq 0$.  In such a situation, ${\rm dim}_K({\rm Ext}^1_{L_K(E)}(\Vp,T))$ is infinite.
\end{theorem}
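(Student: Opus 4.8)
The plan is to apply $\Hom_{L_K(E)}(-,T)$ to the projective resolution of $\Vp$ recorded before the theorem (from Theorem~\ref{Vpinftynotfinitelypresented} and Corollary~\ref{KernelissumofJsubi}),
$$0 \longrightarrow \Ker(\rho_p) \longrightarrow L_K(E)v \stackrel{\rho_p}{\longrightarrow} \Vp \longrightarrow 0, \qquad v=s(p),$$
where $\Ker(\rho_p)=\bigoplus_{i\geq 0}J_i(p)$ and each $J_i(p)$ is projective by Lemma~\ref{eachJiisadirectsum}. Since $L_K(E)v$ is projective, this identifies $\Ext^1_{L_K(E)}(\Vp,T)$ with the cokernel of the restriction map
$$\mu_*: \Hom_{L_K(E)}(L_K(E)v,T) \longrightarrow \Hom_{L_K(E)}\Big(\bigoplus_{i\geq 0}J_i(p),\,T\Big)\;=\;\prod_{i\geq 0}\Hom_{L_K(E)}(J_i(p),T).$$
By Remark~\ref{rem:idempotent} I would identify the source with $vT$, so that $\mu_*$ sends $t\in vT$ to the family whose $i$-th component is the map $x\mapsto xt$ on $J_i(p)$. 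Two earlier lemmas then do the bookkeeping: Lemma~\ref{(Li)tequalszero} says $\mu_*(t)$ has only finitely many nonzero components, so $\operatorname{Im}(\mu_*)\subseteq\bigoplus_{i\geq 0}\Hom_{L_K(E)}(J_i(p),T)$; and Lemma~\ref{U(T)lemma} says $\Hom_{L_K(E)}(J_i(p),T)\neq 0$ precisely when $r(X_i(p))\cap U(T)\neq\emptyset$. Set $I:=\{i\geq 0 : r(X_i(p))\cap U(T)\neq\emptyset\}$.

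First I would treat the implication giving $\Ext^1\neq 0$ together with the infinite-dimensionality claim. Suppose $I$ is infinite. Since $\operatorname{Im}(\mu_*)$ lies inside the direct sum, there is a surjection
$$\Ext^1_{L_K(E)}(\Vp,T)=\Big(\prod_{i\geq 0}\Hom(J_i(p),T)\Big)\big/\operatorname{Im}(\mu_*)\;\twoheadrightarrow\;\Big(\prod_{i\in I}\Hom(J_i(p),T)\Big)\big/\Big(\bigoplus_{i\in I}\Hom(J_i(p),T)\Big).$$
Each $\Hom(J_i(p),T)$ with $i\in I$ is a nonzero $K$-vector space, so this quotient surjects in turn onto $\prod_{i\in I}K\big/\bigoplus_{i\in I}K$, which (as $I$ is countably infinite) is isomorphic to $\prod_{\N}K\big/\bigoplus_{\N}K$. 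The latter has infinite $K$-dimension — an elementary fact: $\prod_\N K$ has uncountable $K$-dimension while $\bigoplus_\N K$ has countable dimension; concretely, for $p_1<p_2<\cdots$ the primes, the images in $\prod_\N K/\bigoplus_\N K$ of the indicator functions of the sets of positive multiples of $p_j$ are $K$-linearly independent. Hence $\dim_K\Ext^1_{L_K(E)}(\Vp,T)$ is infinite, and in particular nonzero.

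For the converse I would argue contrapositively: if $I$ is finite, then $\Ext^1_{L_K(E)}(\Vp,T)=0$. Indeed $\prod_{i\geq 0}\Hom(J_i(p),T)=\bigoplus_{i\in I}\Hom(J_i(p),T)$ is now a finite direct sum, so it suffices to show $\mu_*$ maps onto it. Via the isomorphism $J_i(p)\cong\bigoplus_{f\in X_i(p)}L_K(E)r(f)$ of Lemma~\ref{eachJiisadirectsum}, a family $\psi=(\psi_i)_{i\in I}$ is determined by the elements $t_{i,f}:=\psi_i(f^*p_i^*)\in r(f)T$, for $i\in I$ and $f\in X_i(p)$. I would then take the candidate preimage
$$t\;:=\;\sum_{i\in I}\ \sum_{f\in X_i(p)}p_i f\,t_{i,f}\ \in\ vT$$
(a finite sum, as $E$ is finite) and verify $\mu_*(t)=\psi$ by evaluating on the generators $f^*p_j^*$ of $J_j(p)$:
$$(f^*p_j^*)\,t=\sum_{i\in I}\ \sum_{g\in X_i(p)}(f^*p_j^*p_i g)\,t_{i,g}.$$
When $i\neq j$ the monomial $f^*p_j^*p_i g$ vanishes: using $\beta^*\beta=r(\beta)$ the product $p_j^*p_i$ reduces to $(e_{i+1}\cdots e_j)^*$ (if $i<j$) or to $e_{j+1}\cdots e_i$ (if $i>j$), after which (CK1) kills the expression since $e_{i+1}^*g=0$ (as $g\neq e_{i+1}$) in the first case and $f^*e_{j+1}=0$ (as $f\neq e_{j+1}$) in the second. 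The only surviving contribution, present exactly when $j\in I$, is the $i=j$ one, which collapses by (CK1) to $\sum_{g\in X_j(p)}(f^*g)\,t_{j,g}=r(f)t_{j,f}=t_{j,f}$. Thus $(f^*p_j^*)t=t_{j,f}$ for $j\in I$ and $(f^*p_j^*)t=0$ for $j\notin I$, i.e.\ $\mu_*(t)=\psi$, so $\mu_*$ is onto and $\Ext^1_{L_K(E)}(\Vp,T)=0$.

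I expect the main obstacle to be the explicit verification in the last paragraph: assembling the correct preimage $t$ and pushing the (CK1) cancellations through cleanly, while keeping track of the degenerate index $i=0$ (where $p_0=v$) and of the identification $\Hom_{L_K(E)}(J_i(p),T)\cong\bigoplus_{f\in X_i(p)}r(f)T$. The homological set-up, the identification $\Hom_{L_K(E)}(L_K(E)v,T)\cong vT$, and the appeals to Lemmas~\ref{(Li)tequalszero} and \ref{U(T)lemma} are routine; the elementary fact that $\prod_\N K/\bigoplus_\N K$ is infinite-dimensional is standard but should be stated with care.
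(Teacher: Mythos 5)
Your proposal is correct and follows essentially the same route as the paper: the same projective resolution with $\Ker(\rho_p)=\bigoplus_{i\geq 0} J_i(p)$, Lemma~\ref{(Li)tequalszero} to see that right multiplications have only finitely many nonzero components, Lemma~\ref{U(T)lemma} to detect which components can be nonzero, and in the finite case your explicit preimage $t=\sum_{i\in I}\sum_{f\in X_i(p)}p_i f\, t_{i,f}$ is exactly the paper's multiplier $\psi\bigl(\sum_{i}\sum_{f\in X_i(p)} p_i f f^* p_i^*\bigr)$. The only cosmetic difference is that you get nonvanishing and infinite dimensionality by surjecting onto $\prod_{\N}K/\bigoplus_{\N}K$ and using prime-indexed indicator functions, whereas the paper exhibits the corresponding prime-indexed morphisms $\Psi^{z}$ directly in $\Hom_{L_K(E)}(\Ker(\rho_p),T)$ --- the same trick in different packaging.
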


\begin{proof}

($\Rightarrow$) \ 
Suppose $r(X_i(p)) \cap U(T) \neq \emptyset$ for at most finitely many $i\geq 0$.    We seek to show that every element of ${\rm Hom}_{L_K(E)}({\rm Ker}(\rho_p), T)$ arises as right multiplication by an element of $T$.   We have ${\rm Hom}_{L_K(E)}({\rm Ker}(\rho_p), T) = {\rm Hom}_{L_K(E)}(\oplus_{i\geq 0}J_i(p), T)  \cong \prod_{i\geq 0} {\rm Hom}_{L_K(E)}(J_i(p),T)$, which by Lemma \ref{U(T)lemma} and hypothesis equals $ \prod_{i = 0}^N {\rm Hom}_{L_K(E)}(J_i(p),T)$ for some $N \in \N$.   For each $i\geq 0$ and $f\in X_i(p)$, the element $p_i f f^* p_i^*$ is an idempotent generator of $L_K(E) f^* p_i^*$; moreover $\{p_i f f^* p_i^*: f\in X_i(p), i\geq 0\}$ is a set of orthogonal idempotents in $L_K(E)$. Every element 
$\varphi$ of ${\rm Hom}_{L_K(E)}({L_K(E)}p_iff^*p_i^*,T)$ is the right multiplication  by $
\varphi(p_iff^*p_i^*)$; then every element $\psi$ of ${\rm Hom}_{L_K(E)}(\oplus_{i=0}^N J_i(p),T)$ is the right multiplication  by $\psi(\sum_{i=0}^N \sum_{f \in X_i(p)} p_iff^*p_i^*)$.   So ${\rm Ext}^1_{L_K(E)}(\Vp,T) =  0$.
%
%
%
%

\smallskip

($\Leftarrow$)  Conversely, let us see that
$r(X_i(p)) \cap U(T) \neq \emptyset$ for infinitely many $i\geq 0$ implies that there is an element of ${\rm Hom}_{L_K(E)}({\rm Ker}(\rho_p), T)$ which does not arise as a right multiplication by an element of $T$.  By Lemma~\ref{U(T)lemma} there exists an increasing sequence $(i_n)_{n\in\mathbb N}$ of natural numbers such that ${\rm Hom}_{L_K(E)}(J_{i}(p), T) \neq 0$ if and only if $i=i_n$ for a suitable $n\in\mathbb N$. Let $\{\phi_i\in {\rm Hom}_{L_K(E)}(J_{i}(p), T):i\in\mathbb N\}$ be a family of morphisms such that $\phi_{i_n}\not=0$ for each $n\in\mathbb N$.
Then \[\varphi=\prod_{i\in\mathbb N}\phi_i\in{\rm Hom}_{L_K(E)}(\oplus_{i\in\mathbb N}J_{i}(p), T)\]
is a morphism which is not, by Lemma~\ref{(Li)tequalszero}, right multiplication by  element of $T$.
%
%
%
%
%

\smallskip

To establish the final statement, consider an increasing sequence $(i_n)_{n\in\mathbb N}$ of natural numbers and a family $\{\phi_{i_n}\in {\rm Hom}_{L_K(E)}(J_{i_n}(p), T):n\in\mathbb N\}$ of nonzero morphisms. Define for each prime $z\in\mathbb N$ the morphism  $\Psi^{z}\in \Hom_{L_K(E)}(\Ker(\rho_p), T)$ as follows:  for each $j\geq 0$,  
$$\Psi^{z}( J_{i_j}(p))=\psi_{i_j}(J_{i_j}(p)) \mbox { if } z \mbox{ divides }  j, \  \mbox { while } \Psi^{z}( J_{\ell}(p))=0 \mbox{ otherwise. }$$
Observe that $\pi(\Psi^{z})\neq 0$, since $\Psi^{z}$ has infinitely many nonzero components.  
Finally, $\{\pi(\Psi^{z}) \ | \ z\in\mathbb N, \ z\text{ prime} \}$ is a set of linearly independent elements of 
$\Ext^1_{L_K(E)}(\Vp, T))$, as follows.  Let $F$ be a finite subset of primes in $\mathbb N$, and assume $\sum_{z\in F}k_z\pi(\Psi^{z})=0$; then $\pi(\sum_{z\in F}k_z \Psi^{z})=0$ and therefore $\sum_{z\in F}k_z \Psi^{z}$ is a right multiplication by an element $t$ of $T$. By Lemma~\ref{(Li)tequalszero} there exists a positive integer $N = N(t)$ for which $(J_i(p))t = 0$ for all $i\geq N$. For each prime $\hat z\in F$, let $m_{\hat z}$ a natural number greater than $N$; then
\[0=(J_{{\hat z}^{m_{\hat z}}}(p))t=\sum_{z\in F}k_z \Psi^{z}(J_{{\hat z}^{m_{\hat z}}}(p))=
k_{\hat z}\Psi^{\hat z}(J_{{\hat z}^{m_{\hat z}}}(p))\]
and hence $k_{\hat z}=0$. Hence $\dim_K(\Ext_{L_K(E)}^1(\Vp, T))$ is infinite. 
\end{proof}

\bigskip

\noindent We emphasize the fact that Theorems~\ref{Ext(ST)Srational} and \ref{Ext(ST)Sirrational}    allow us to compute the dimension of the Ext$^1$-groups between two Chen simple modules completely and solely in terms of  properties of the graph $E$.

\begin{example}\label{example:infinitedimensionalExt}
{\rm We again revisit the graph $R_2$ and irrational infinite path $q=efeffefffe \cdots$ of Example \ref{R2Example}.  Let $T = V_{[q]}$.  Since clearly $U(T) = \{v\}$ and $r(X_i) = \{v\} $ for all $i\in \N$ as well, Theorem \ref{Ext(ST)Sirrational} yields that $\dim_K(\Ext_{L_K(E)}^1(V_{[q]},V_{[q]} ))$ is infinite. 
}
\end{example}

\begin{example}\label{example:irrationalfinitedimensional}
{\rm 
With the statement of Theorem \ref{Ext(ST)Sirrational} as motivation, we give examples of graphs $E_n$, an irrational infinite path $p$ in $E_n$, and Chen simple $L_K(E_n)$-modules $T$ having $r(X_i(p)) \cap U(T)  \neq \emptyset$ for only finitely many $i\in \Z^+$.   For $n \in \N$ consider the graph $E_n$ given by
$$  \xymatrix{  \bullet^{v_1} \ar[r]^{e_1}  \ar[dr]  & \bullet^{v_2}  \ar[r]^{e_2}  \ar[d]   &   & \hspace{-.5in}  \cdots  \ar[r] & \bullet^{v_n} \ar[r]^{e_n}  \ar[dlll] & \bullet^{v_{n+1}} \ar[r]^{e_{n+1}}  \ar[dllll]& \bullet^{v} \ar@(r,u)_e \ar@(r,d)^f \\
 & \bullet^w \ar@(d,r)_g \ar@(d,l)^h  &  &   &   &   &    }$$
Let $p$ denote the  irrational infinite path $e_1e_2 \cdots e_n e_{n+1} efeffefffe \cdots$.   Let $T_1$ be the Chen simple $L_K(E)$-module $V_{[g^\infty]}$,
and let $T_2$ denote the Chen simple $L_K(E)$-module $V_{[q]}$ corresponding to the  irrational infinite path $q = ghghhghhhg \cdots$.   Then for $j=1,2$, $r(X_i(p)) \cap U(T_j) $ is nonempty (indeed, equals $\{w\}$) precisely when $0\leq i \leq n$.   

Consequently,  by Theorem \ref{Ext(ST)Sirrational}, $\Ext_{L_K(E)}^1(V_{[p]},T_j) = \{0\}$ for $j=1,2$. 
}
\end{example}

We conclude the article by demonstrating the existence of indecomposable $L_K(E)$-modules of prescribed finite length, in case $E$ is a finite graph which contains cycles.  
Recall that a module $M$ is called   {\it uniserial} in case the lattice of submodules of $M$ is totally ordered.  In particular, any uniserial module is indecomposable. Moreover, the radical $\Rad(M)$  of a uniserial module $M$ is the unique maximal submodule of $M$, hence $M/\Rad{M}$ is simple.

\begin{lemma}\cite[Lemma 16.1 with Proposition 16.2]{LR}\label{lemma:uniserial}
Let $R$ be any unital ring.  Let  $U$ be a uniserial left $R$-module of finite length, and $X$ a simple left $R$-module.
Consider the morphism $\psi:\Ext^1_R(X,U)\to \Ext^1_R(X,U/\Rad U)$. An extension in $\Ext^1_R(X,U)$ is uniserial if and only if it does not belong to $\Ker\psi$.

In particular, if $R$ is hereditary, there exists a uniserial extension of $U$ by $X$ if and only if $\Ext^1_R(X,U/\Rad U)\not=0$.
\end{lemma}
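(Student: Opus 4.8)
Since the first assertion is quoted verbatim from \cite{LR}, the plan is to deduce the ``In particular'' clause from it by exploiting heredity. First I would record that, $U$ being uniserial of finite length, $\Rad U$ is its unique maximal submodule, so that $S := U/\Rad U$ is simple; moreover the map $\psi$ appearing in the statement is precisely the map induced on $\Ext^1_R(X,-)$ by the canonical projection $U \twoheadrightarrow S$.

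Next I would apply $\Hom_R(X,-)$ to the short exact sequence $0 \to \Rad U \to U \to S \to 0$ and extract from the resulting long exact sequence of $\Ext$-groups the piece
$$\Ext^1_R(X,\Rad U) \longrightarrow \Ext^1_R(X,U) \stackrel{\psi}{\longrightarrow} \Ext^1_R(X,S) \longrightarrow \Ext^2_R(X,\Rad U).$$
Since $R$ is hereditary it has left global dimension at most $1$, so $\Ext^2_R(X,\Rad U) = 0$; hence $\psi$ is surjective.

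The equivalence is then formal. If $\Ext^1_R(X, U/\Rad U) \neq 0$, I would choose a nonzero $\eta$ in it and, by surjectivity of $\psi$, a class $\xi \in \Ext^1_R(X,U)$ with $\psi(\xi) = \eta \neq 0$; then $\xi \notin \Ker \psi$, so by the first part of the lemma the middle term $E$ of the corresponding extension $0 \to U \to E \to X \to 0$ is uniserial, which is the desired uniserial extension of $U$ by $X$. Conversely, a uniserial extension of $U$ by $X$ represents a class $\xi \in \Ext^1_R(X,U)$ which, again by the first part, lies outside $\Ker \psi$; thus $\psi(\xi) \neq 0$, forcing $\Ext^1_R(X, U/\Rad U) \neq 0$.

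I expect no real obstacle here: the only ingredient beyond the cited result is the vanishing of $\Ext^2_R$, which is immediate from heredity, and the sole point requiring a little care is the identification of the map occurring in the long exact sequence with the $\psi$ of the statement, which is just functoriality of $\Ext^1_R(X,-)$ applied to $U \to U/\Rad U$.
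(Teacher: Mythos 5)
Your argument is correct, and it is essentially the only reasonable one: the paper itself offers no proof of this lemma (it is quoted from Levy--Robson, with the hereditary consequence folded into the citation), and your derivation of the ``In particular'' clause --- identifying $\psi$ with the map induced by $U\twoheadrightarrow U/\Rad U$, using heredity to kill $\Ext^2_R(X,\Rad U)$ in the long exact sequence so that $\psi$ is surjective, and then invoking the first assertion in both directions (noting the converse needs no heredity) --- is exactly the intended reasoning.
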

%

As observed in Remark \ref{rem:Bergman}, $L_K(E)$ is hereditary for any row-finite graph $E$.  So Lemma \ref{lemma:uniserial} gives the following:

\begin{corollary}\label{cor:uniserialoflengthN}
Let $E$ be a finite graph. If $S$ is a Chen simple $L_K(E)$-module such that $\Ext^1_{L_K(E)}(S, S)\neq 0$ and $L$ is a uniserial $L_K(E)$-module such that $L/\Rad(L)\cong S$, then there exists a uniserial $L_K(E)$-module $M$ which is an extension of $L$ by $S$.

In particular, for any $n\in \mathbb{N}$ there exists a uniserial  $L_K(E)$-module of length  $n$, all of whose  composition factors are isomorphic to $S$.
%
%
%
%
\end{corollary}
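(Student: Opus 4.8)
The plan is to apply Corollary~\ref{cor:uniserialoflengthN} iteratively, starting from the simple module $S$ itself. First I would note that the hypotheses of the corollary are met: by assumption there exists a Chen simple module $S$ with $\Ext^1_{L_K(E)}(S,S)\neq 0$. Since $E$ is a finite graph containing at least one cycle, pick a simple closed path $d$ in $E$; then $S = V_{[d^\infty]}$ has $\Ext^1_{L_K(E)}(V_{[d^\infty]},V_{[d^\infty]})\neq 0$ by Corollary~\ref{cor:Ext1VdVd}, so such an $S$ does exist. This is what connects the final statement to the running hypothesis ``$E$ contains at least one cycle''.

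Next I would set up the induction on $n$. For the base case $n=1$, the module $S$ itself is uniserial (it is simple, hence its submodule lattice $\{0 \subsetneq S\}$ is a chain) of length $1$ with $S/\Rad(S)\cong S$, and its single composition factor is $S$. For the inductive step, suppose $L$ is a uniserial $L_K(E)$-module of length $n$ all of whose composition factors are isomorphic to $S$; in particular $L/\Rad(L)\cong S$ since the top composition factor of a uniserial module is $L/\Rad(L)$, and this is forced to be $S$ by hypothesis. Now apply the first paragraph of Corollary~\ref{cor:uniserialoflengthN}: since $\Ext^1_{L_K(E)}(S,S)\neq 0$ and $L$ is uniserial with $L/\Rad(L)\cong S$, there is a uniserial $L_K(E)$-module $M$ that is an extension $0\to L\to M\to S\to 0$. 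Then $M$ has length $n+1$, and its composition factors are those of $L$ together with $S$, hence all isomorphic to $S$. This completes the induction.

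The only point requiring a little care is the justification of the first (non-``in particular'') part of Corollary~\ref{cor:uniserialoflengthN}, which I would derive from Lemma~\ref{lemma:uniserial}: since $L_K(E)$ is hereditary for the finite (hence row-finite) graph $E$ by Remark~\ref{rem:Bergman}, the lemma says a uniserial extension of $L$ by $S$ exists precisely when $\Ext^1_{L_K(E)}(S, L/\Rad(L))\neq 0$; and $L/\Rad(L)\cong S$, so this is exactly the hypothesis $\Ext^1_{L_K(E)}(S,S)\neq 0$. I do not expect any genuine obstacle here — the argument is a clean bootstrapping from Corollary~\ref{cor:Ext1VdVd} plus Lemma~\ref{lemma:uniserial}; the mild subtlety is simply making sure at each stage that the module produced is still uniserial with top $S$, which is built into the statement of the lemma, and that ``finite length'' is preserved, which is immediate since each step adds one composition factor.
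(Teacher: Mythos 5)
Your argument is correct and matches the paper's proof: the first statement is deduced from Lemma~\ref{lemma:uniserial} together with the hereditariness of $L_K(E)$ (Remark~\ref{rem:Bergman}), and the length-$n$ modules are then built by induction, applying the first statement at each step (the paper starts its induction with an explicit length-$2$ non-split self-extension of $S$, while you start at $n=1$ with $S$ itself, which amounts to the same thing). The opening paragraph about producing such an $S$ from a simple closed path via Corollary~\ref{cor:Ext1VdVd} is not needed for the corollary itself, but it is harmless and coincides with the paper's remark following the corollary.
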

\begin{proof}
The first statement follows directly from Lemma~\ref{lemma:uniserial} and the hereditariness of $L_K(E)$. In order to show the existence of uniserial modules of arbitrary length, first observe that any non-zero element of the abelian group $\Ext^1_{L_K(E)}(S, S)\neq 0$ corresponds to an indecomposable uniserial module $L_2$ of length $2$, with $\Rad(L_2)\cong S$ and $L_2/{\Rad(L_2)}\cong S$. Then, by applying the first statement, there exists a uniserial module $L_3$ of length $3$ which is an extension of $L_2$ by $S$. Since $\Rad(L_3)\cong L_2$ and hence $L_3/\Rad(L_3)\cong S$, we can proceed by induction.
\end{proof}

Observe that if $E$ contains a simple closed path $d$, by Corollary \ref{cor:Ext1VdVd} the module $S=V_{[d^{\infty}]}$ satisfies $\Ext^1_{L_K(E)}(S, S)\neq 0$ and hence Corollary~\ref{cor:uniserialoflengthN} applies.


\end{document}